    \crefname{conj}{conjecture}{conjectures}
    \crefname{conj}{Conjecture}{Conjectures}
\numberwithin{equation}{section}
\newtheorem{thm}{Theorem}[section]
\newtheorem*{introthm*}{Main Theorem}
\newtheorem{cor}[thm]{Corollary}
\newtheorem{lem}[thm]{Lemma}
\newtheorem{prop}[thm]{Proposition}
\theoremstyle{definition}
\newtheorem{defn}[thm]{Definition}
\newtheorem*{defn*}{Definition}
\newtheorem{ex}[thm]{Example}
\newtheorem{obs}[thm]{Observation}
\newtheorem{constr}[thm]{Construction}
\newtheorem{fact}[thm]{Fact}
\newtheorem{rem}[thm]{Remark}
\newtheorem{notation}[thm]{Notation}
\theoremstyle{remark}
\newtheorem*{claim}{Claim}
\newcommand{\kk}{{\sf{k}}}
\newcommand{\A}{{\mathbb A}}
\newcommand{\C}{{\mathbb C}}
\newcommand{\N}{{\mathbb N}}
\renewcommand{\P}{{\mathbb P}}
\newcommand{\Q}{{\mathbb Q}}
\newcommand{\Z}{{\mathbb Z}}
\newcommand{\bu}{\mathbf{u}}
\def\Sp{\operatorname{Sp}}
\newcommand{\bw}{{\mathsf\Lambda}}
\newcommand{\R}{{\mathcal R}}
\newcommand{\m}{{\mathfrak m}}
\newcommand{\sym}{\mathfrak{S}}
\renewcommand{\SS}{\mathbb{S}}
\renewcommand{\l}{\lambda}
\def\Aut{\operatorname{Aut}}
\def\coker{\operatorname{coker}}
\def\ch{\operatorname{char}}
\def\mod{\operatorname{mod}}
\def\Res{\operatorname{Res}}
\def\GL{\operatorname{GL}}
\def\Ann{{\operatorname{Ann}}}
\def\Tor{{\operatorname{Tor}}}
\def\Ext{{\operatorname{Ext}}}
\def\HF{{\operatorname{\sf{HF}}}}
\def\H{{\operatorname{\sf{H}}}}
\def\Ker{\operatorname{Ker}}
\def\Hom{\operatorname{Hom}}
\def\Soc{\operatorname{Soc}}
\def\Supp{\operatorname{Supp}}
\newcommand{\pa}{\operatorname{part}}
\def\lex{{\operatorname{Lex}}}
\def\Sym{{\operatorname{Sym}}}
\def\l{\lambda}
\def\type{\operatorname{type}}
\def\diff{\operatorname{diff}}
\newcommand{\fm}{\mathfrak{m}}
\DeclareMathOperator{\Mod}{\operatorname{Mod}}
\DeclareMathOperator{\MG}{\mod_{\sym_n}}
\title[The resolution of a general principal symmetric ideal ]{The minimal free resolution \\
of a general principal symmetric ideal}
\author[]{Megumi Harada, Alexandra Seceleanu, and Liana M.~ \c{S}ega}
\address[Megumi Harada]{McMaster University}
\email{haradam@mcmaster.ca} 
\address[Alexandra Seceleanu]{University of Nebraska-Lincoln}
\email{aseceleanu@unl.edu}
\address[Liana M.~\c{S}ega]{University of Missouri Kansas City}
\email{segal@umkc.edu}
\date{\today}
\keywords{Principal symmetric ideal, Betti numbers, narrow algebra, Specht module.}
\subjclass[2010]{Primary: 13D02 primary. Secondary 13A50, 20C30, 13D07 }
\begin{document}

\begin{abstract}
We introduce the class of principal symmetric ideals, which are ideals generated by the orbit of a single polynomial under the action of the symmetric group. Fixing the degree of the generating polynomial, this class of ideals is parametrized by points in a suitable projective space. We show that the minimal free resolution of a principal symmetric ideal is constant on a nonempty Zariski open subset of this projective space and we determine this resolution explicitly. Along the way, we study two classes of graded algebras which we term narrow and extremely narrow; both of which are instances of compressed artinian algebras.
\end{abstract}

\maketitle 
\tableofcontents

\section{Introduction}
\label{s:prelim}
Let $\kk$ be a field and let $R=\kk[x_1,\ldots, x_n]$ denote the polynomial ring in $n$ variables. The ring $R$ comes equipped  with a natural action of the symmetric group $\sym_n$ given by 
$$\sigma \cdot f(x_1,\ldots, x_n)=f(x_{\sigma(1)}, \ldots, x_{\sigma(n)})\quad \text{for}\quad  \sigma\in \sym_n\,. 
$$
 We are interested in homogeneous ideals which acquire an induced action from the action of $\sym_n$ on $R$, which we refer to as symmetric ideals. The study of such ideals is both a classical topic in commutative algebra and one of current interest. In particular, the graded betti numbers for (certain families of) monomial symmetric ideals are the focus of the recent works  \cite{Galetto}, \cite{BDGMNORS} and \cite{MR}. 
 
 In this paper, we introduce a different class of symmetric ideals which we term   principal symmetric ideals.  We determine the graded betti numbers for general such ideals, that is, when their generator belongs to a non-empty Zariski-open set in its natural parameter space. This constitutes a symmetric counterpart to the study of general graded artinian  algebras parametrized by their Macaulay dual generators, an endeavor which was undertaken primarily in \cite{Iar} and \cite{Boij}.
 
We now introduce the main characters of our work in more detail. Let $f_1, \ldots, f_a\in R$. Define the {\em symmetric ideal} generated by $f_1, \ldots f_a$ to be 
\[
(f_1, \ldots f_a)_{\sym_n}=(\sigma \cdot f_i : \sigma \in \sym_n, 1\leq i\leq a).
\]
We are particularly interested in the case of homogeneous principal symmetric ideals.
A {\em principal symmetric ideal} is an ideal of the form $(f)_{\sym_n}$, where $f\in R$ is a homogeneous polynomial.

Fix $d\in \mathbb N$. A parameter space for the set of principal symmetric ideals generated in degree $d$ is $\P^{N-1}$, where $N=\binom{n+d-1}{d}$.
Indeed, let $M_d$ denote the set of monomials of degree $d$ in $R$ listed as $M_d=\{m_1, \ldots, m_N\}$ in an arbitrary order (for example, lexicographical).
Points in $\P^{N-1}$ parametrize principal symmetric ideals generated in degree $d$ via the assignment 
\begin{align*}
&\Phi: \P^{N-1} \to \text{ principal symmetric ideals} \\
& \Phi(c_1:\cdots: c_N) :=(f_c)_{\sym_n}\,,\qquad \text{where}\quad f_c=\sum_{i=1}^Nc_im_i\,.\qedhere
\end{align*}

The map $\Phi$ above is onto, but not one-to-one. It allows us to formulate a notion of a general principal symmetric ideal, which we now formalize. 
We will say that \emph{a general principal symmetric ideal generated in degree $d$ satisfies property $\mathcal P$} if there exists a non-empty Zariski-open set $U$ of $\P^N$ so that for each  $c\in U$, the principal symmetric ideal $\Phi(c)=(f_c)_{\sym_n}$ satisfies the property $\mathcal P$. 

With this terminology in place, we inquire into properties satisfied by general principal symmetric ideals. Our initial interest in this question was spurred by a result in \cite[Theorem 1]{Kre}, where Kretschmer proves (in a different language)  a more general version of the following result. 

\begin{thm}[{\cite[Theorem 1\,(i)]{Kre}}]
\label{thm:Kre}
For each $d\in \N$,  a general principal symmetric ideal $I$ generated in degree $d$ has the property that  $R/I$ is artinian. 
\end{thm}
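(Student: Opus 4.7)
The plan is the standard two-step approach: exhibit one principal symmetric ideal $I_0$ in degree $d$ for which $R/I_0$ is artinian, and then invoke semicontinuity of rank across the family parametrized by $\P^{N-1}$ to conclude that the artinian locus is a non-empty Zariski open set.

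For the concrete example, I would take $f_0 = x_1^d$, so that $(f_0)_{\sym_n} = (x_1^d, x_2^d, \ldots, x_n^d)$. This is generated by a regular sequence of length $n = \dim R$, so $R/(f_0)_{\sym_n}$ is a complete intersection artinian algebra; in particular there is an integer $k_0$ (for instance $k_0 = n(d-1)+1$) with $\fm^{k_0} \subseteq (f_0)_{\sym_n}$.

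For the openness, I would consider for each $k \geq d$ the multiplication map
\[
\mu_{k,c} \colon R_{k-d} \otimes_\kk \Span_\kk\bigl(\sym_n \cdot f_c\bigr) \longrightarrow R_k
\]
whose image is the degree-$k$ component of $(f_c)_{\sym_n}$. The matrix of $\mu_{k,c}$ in fixed monomial bases has entries polynomial in the coordinates of $c$, so $\rank(\mu_{k,c})$ is lower semicontinuous in $c \in \kk^N$; since $f_c$ is linear in $c$, the rank depends only on the line through $c$, so the construction descends to $\P^{N-1}$. Consequently
\[
U_k := \{c \in \P^{N-1} : \rank(\mu_{k,c}) = \dim_\kk R_k\} = \{c \in \P^{N-1} : (R/(f_c)_{\sym_n})_k = 0\}
\]
is Zariski open.

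Combining the two steps, the point $c_0 \in \P^{N-1}$ corresponding to $f_0$ lies in $U_{k_0}$, so $U_{k_0}$ is non-empty. For any $c \in U_{k_0}$, the inclusion $R_{k_0} \subseteq (f_c)_{\sym_n}$ propagates to all degrees $k \geq k_0$ via multiplication by $R_{k-k_0}$, so $R/(f_c)_{\sym_n}$ is artinian, as required. I do not foresee any serious obstacle; the only points requiring brief verification are the lower semicontinuity of rank (standard) and its descent to projective space (immediate from the linearity of $f_c$ in $c$).
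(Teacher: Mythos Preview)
The paper does not supply its own proof of this statement; it simply attributes the result to Kretschmer \cite[Theorem 1(i)]{Kre}. Your argument is therefore not competing with anything in the text, and it is essentially correct and self-contained. The example $f_0=x_1^d$ is exactly the right one, and the semicontinuity step is the same mechanism the paper itself exploits later in \Cref{upper-s}.

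There is one imprecision worth tightening. You define $\mu_{k,c}$ with domain $R_{k-d}\otimes_\kk \Span_\kk(\sym_n\cdot f_c)$ and then speak of ``fixed monomial bases'', but the second tensor factor has dimension that varies with $c$, so there is no fixed basis to take. The clean formulation---and the one the paper uses in the proof of \Cref{upper-s}---is to take the domain to be $R_{k-d}\otimes_\kk \kk^{\,n!}$, with the map
\[
(g_\sigma)_{\sigma\in\sym_n}\ \longmapsto\ \sum_{\sigma\in\sym_n} g_\sigma\,(\sigma\cdot f_c).
\]
The matrix of this map in monomial bases on both sides has entries linear in $c$, its image is $((f_c)_{\sym_n})_k$, and the rest of your argument goes through verbatim. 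With this adjustment there is no gap.
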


The main goal of this paper is to describe general symmetric ideals in more detail, specifically in terms of their numerical and homological invariants such as the Hilbert series, socle type, Macaulay inverse systems, and minimal graded free resolutions. Recall that the graded betti numbers of a graded $R$-module $U$ are 
$\beta_{i,j}^R(U) :=\dim_\kk \Tor_i^R(U, \kk)_j$ for $i,j\in \mathbb Z$.  They are summarized in a betti table which displays $\beta_{i,j}(U)$ in column $i$ and row $j-i$.

The following is our main result. It employs the notation $P(d)$ for the number of partitions of $d$, that is,
\[
P(d) =\# \left \{ (p_1, \ldots, p_t) \mid  p_1\geq  \ldots \geq  p_t, p_1+\cdots+p_t=d, t\in \N \right \}.
\]

\begin{introthm*}[\Cref{introthm}]
Suppose $\kk$ is  infinite  with ${\rm char}(\kk)=0$ and fix an integer $d\geq 2$. For sufficiently large  $n$, a general principal symmetric ideal $I$ of ${\kk}[x_1,\ldots, x_n]$ generated in degree $d$ satisfies the following. 
\begin{enumerate}[\quad\rm(1)]
\item The Hilbert function of $A=R/I$ is given by
\[
 \HF_A(i):=\dim_\kk A_i = \begin{cases}\dim_\kk R_i &\text{if $i\le d-1$}\\
 P(d)-1&\text{if $i=d$}\\
 0 &\text{if $i>d$.}\
 \end{cases}
\]
\item The betti table of $A$ has the form
\[
\begin{matrix}
     &0&1&2&\cdots&i &\cdots & n-2 &n-1&n\\
     \hline
     \text{total:}&1&u_1&u_2&\cdots&u_i&\cdots&u_{n-2} &u_{n-1}+\ell&a+b\\
     \hline
     \text{0:}&1&\text{.}&\text{.}&\text{.}&\text{.}&\text{.}&\text{.}&\text{.}&\text{.}\\
     \text{\vdots}&\text{.}&\text{.}&\text{.}&\text{.}&\text{.}&\text{.}&\text{.}&\text{.} &\text{.}\\
     \text{d-1:}&\text{.}&u_1&u_2&\cdots&u_i&\cdots&u_{n-2} &u_{n-1}&b\\
     \text{d:}&\text{.}&\text{.}&\text{.}&\text{.}&\text{.}&\text{.}&\text{.}&\ell & a \\
     \end{matrix}
\]
with 
\begin{eqnarray*}
a &=&P(d)-1,\\
 b&=&
 \dim_\kk R_{d-1}-(P(d)-1)(n-1)+P(d-1) \\
  u_{i+1} &=& \binom{n+d-1}{d+i}\binom{d+i-1}{i}-\left(P(d)-1\right)\binom{n}{i} \\
    \ell &=& P(d)-P(d-1)-1.
  \end{eqnarray*}
\item The graded minimal free resolution of $A$ has $\sym_n$-equivariant structure described by the $\sym_n$-irreducible decompositions for the modules $\Tor_i^R(A,\kk)$ given in 
\Cref{thm:equivariantTors}.
\end{enumerate}

Moreover,  the Poincar\'e series of all finitely generated graded $A$-modules are rational, sharing a common denominator. When $d>2$, $A$ is Golod. When $d=2$, $A$ is Gorenstein and Koszul. 
\end{introthm*}

In order to prove the above theorem, we single out the class of {\em narrow graded algebras}, which we believe to be interesting in its own right. A narrow algebra is an artinian quotient algebra $A=R/I$ with $I$ a homogeneous ideal,  so that the initial degree of $I$ coincides with the top socle degree of $A$ cf.~\Cref{def:narrow}. It is a consequence of the definition and in fact equivalent to it, that the betti table of a narrow algebra is concentrated in two adjacent rows, except for its $0$-th homological degree. In view of this, \Cref{introthm} (2) yields that quotient algebras defined by principal symmetric ideals in sufficiently many variables are narrow. In fact they are {\em extremely narrow}, cf.~\Cref{def:extremelynarrow}, a property that forces additionally that the betti table is concentrated in a single row except for the $0$-th, $(n-1)$-st and $n$-th homological degree according to \Cref{en-props}. 

Algebras with Hilbert function as in part (1) of the above Theorem are instances of {\em compressed algebras}; see \Cref{def:compressed}. This class of algebras has been studied extensively in the graded case most notably by Iarrobino \cite{Iar} and Boij \cite{Boij} and in the local case by  Kustin-\c{S}ega-Vraciu \cite{KSV}. Compressed level artinian algebras are often studied using the {\em Macaulay inverse system}, which parametrizes this class of algebras. Our \Cref{introthm} shares some similarities with results regarding general level artinian algebras: indeed, these are known to be compressed, and their betti tables are also concentrated in two adjacent rows with the exception of the 0-th and $n$-th homological degrees \cite{Boij}. Unlike our results, those for general artinian level algebras hold for an arbitrary number of variables. However \Cref{introthm} does not hold when there are not enough variables; see \Cref{ex:fewvar}. 

The equivariant structure for the minimal free resolution of a symmetric ideal incorporates the action of the symmetric group in describing the resolution. An action of $\sym_n$ on $R/I$ extends to an action on the minimal free resolution of $R/I$ making each of the graded vector spaces $\Tor^R_i(R/I,\kk)_j$ into a finite-dimensional representation of the symmetric group as recalled in \Cref{lem:A5}. We describe the $\sym_n$-equivariant structure of the minimal free resolution of a general principal symmetric ideal by finding the decomposition of $\Tor^R_i(R/I,\kk)_j$ into irreducible $\sym_n$-modules (Specht modules) in \Cref{thm:equivariantTors}.

Equivariant structures on the resolutions of certain families of symmetric ideals have been previously described by Galetto \cite{Galetto} for the ideals generated by all squarefree monomials of a given degree, by Biermann-de Alba-Galetto-Murai-Nagel-O'Keefe-R\"{o}mer-Seceleanu \cite{BDGMNORS} for symmetric shifted ideals, and by Murai-Raicu \cite{MR} for all symmetric monomial ideals. Our work does not exhibit significant overlap with these contributions. The graded free resolutions for the  powers of the homogeneous maximal ideal can be deduced from work of Buchsbaum-Eisenbud \cite{BE}, which is an important ingredient in our proof. The focus of \cite{BE} is however on equivariant resolutions with respect to an action of the general linear group whence a translation to the induced action of the symmetric subgroup is necessary; see \Cref{lem:Lirreps}.

Our paper is organized as follows. \Cref{s:rep} introduces some representation-theoretic tools, while \cref{s:duality} recalls Macaulay duality and its homological consequences. \cref{s:narrow} focuses on properties of narrow algebras, including equivalent descriptions in terms of their minimal free resolutions in  \Cref{l:narrow-equiv2} and the Macaulay inverse system in \Cref{l:narrow-equiv1}. The subclass dubbed extremely narrow algebras is also discussed therein, with a focus on determining the betti numbers of these algebras in \Cref{cor:betti}. \cref{sec: quadratic symmetric} constitutes an introduction to employing the methods presented so far for quotient algebras defined by principal symmetric ideals. More precisely,  in \cref{sec: quadratic symmetric}  we analyze the case of quadratic principal symmetric ideals, which is less involved, but sets forth a blueprint for our general investigation.  In  \cref{s:example} we construct sufficiently general principal symmetric ideals generated in arbitrary degree, for which we determine the Macaulay inverse system in \cref{sec: linear relations}. This allows us to show that quotient algebras defined by principal symmetric ideals are extremely narrow. Our main theorem comes together in \cref{sec:main}, by combining all the ingredients prepared in advance.  Finally, in the appendix, we make brief remarks about the category of finitely generated graded $R$-modules with group actions. 
\medskip

\noindent \textbf{Acknowledgements.} 
We thank the Fields Institute for sponsoring the two ``Women in Commutative Algebra and Algebraic Geometry'' virtual workshops, in 2022 and 2023, where this collaboration began.  We also thank Claudia Miller for co-organizing these Fields Institute WCAAG workshops, together with the first author, and we thank Martha Precup for her contributions in early discussions on the topic of this paper. We also thank Tony Iarrobino for his extensive comments on our work.

Harada is supported in part by NSERC Discovery Grant 2019-06567 and a Canada Research Chair Tier 2 award.  Seceleanu is supported by NSF DMS--2101225 and DMS--2401482.  \c{S}ega is supported in part by a  Simons Foundation grant  (\#354594).

\section{Representation theory for the symmetric group}
\label{s:rep}

The irreducible representations of the symmetric group $\sym_n$ were worked out by Young and Specht in the early 20th century. Recognizing the latter mathematician's contribution, the irreducible representations of $\sym_n$ are now called Specht modules. 

Let $n$ be a positive integer. We denote $[n]=\{1,\ldots, n\}$. Let $\l$ be a partition of $n$, that is, a tuple $\l=(\l_1, \ldots, \l_p)$ such that $\l_1\geq \l_2\geq \cdots \geq \l_p$ and   $|\l|=\l_1+\cdots+\l_p=n$. We denote the fact that $\l$ is a partition of $n$ by $\l \vdash n$. For a partition $\l$ we write $\#\l$ for the number of parts, not to be confused with $|\l|$, the sum of the parts of $\l$.  When it is convenient to do so, we view $\l$ as a $t$-tuple ($t>\#\l$) by appending $t-\#\l$ entries equal to zero.

Specht $\sym_n$-modules are in bijection with partitions of $n$. For each $\l\vdash n$ the corresponding Specht module is denoted $\Sp_\l$.  
 We adopt the convention that if $\alpha$ is a tuple which is {\em not} a partition then $\Sp_\alpha=0$.
The Specht modules $\Sp_\l$, where $\l$ varies over all partitions of $n$, form a complete set of finite-dimensional irreducible representations for $\sym_n$ over $\mathbb{\C}$ (see \cite[Theorem 4.12]{James}). In fact, since Specht modules are defined over $\Z$, they continue to form a complete set of irreducible representations over any field of characteristic zero.

\begin{ex}\label{ex:altrep}
The Specht modules $\Sp_{(n)}$ and $\Sp_{(1^n)}$ are the trivial and sign representation of $\sym_n$, respectively. 
\end{ex}

The next result collects a few useful properties of Specht modules used in our proofs.

\begin{fact}
\label{prop:propertiesofSpecht}
The following are known properties of the Specht modules. 
\begin{enumerate}
\item The natural representation of $\sym_n$, acting by permutation matrices on $V={\kk}^n$, decomposes as
$V=\Sp_{(n)} \oplus \Sp_{(n-1,1)}$, where $\Sp_{(n-1,1)}$ is the standard representation.
\item The exterior powers of the standard representation are $\bw^i \Sp_{(n-1,1)} \cong \Sp_{(n-i,1^i)}$ cf.~{\color{blue} \cite[Exercise 4.6]{FultonHarris}.} 
\item  For each partition $\l$ of $n$ there is  an identity cf.~\cite[p. 257-258]{Hamermesh}
\[
\Sp_{\l} \otimes_{\kk} \Sp_{(n-1,1)}= \bigoplus_{\mu \in \mathcal{I}(\l)} {\Sp_{\mu}}^{c_\mu},\]
 where: $\mathcal{I}(\l)$ is the set of partitions whose Young diagram is obtained from that of  $\l$ by removing a box from the Young diagram of $\l$ and adding a box to the resulting diagram, $c_\mu=1$ whenever $\mu\neq \l$, and $c_\l$ is the number of distinct parts of $\l$ minus one. 
\end{enumerate}
\end{fact}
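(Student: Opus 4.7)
The plan is to verify each of the three assertions using classical tools from the representation theory of $\sym_n$; since all three already appear in the cited references \cite{James, FultonHarris, Hamermesh}, my sketch emphasizes the conceptual ingredients rather than the combinatorial details.

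For part (1), I would exhibit the decomposition $V = \kk\cdot(1,\ldots,1) \oplus W$, where $W = \{(a_1,\ldots,a_n) : \sum a_i = 0\}$ is the $(n-1)$-dimensional hyperplane of coordinate sum zero. The first summand is visibly the trivial representation $\Sp_{(n)}$. To identify $W$ with the standard representation $\Sp_{(n-1,1)}$, I would compute the character of $W$ at $\sigma \in \sym_n$ as $(\#\{\text{fixed points of }\sigma\}) - 1$ and compare with the Specht-module character computed from polytabloids of shape $(n-1,1)$; matching characters across conjugacy classes establishes both the isomorphism and the irreducibility via orthogonality of characters.

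For part (2), I would proceed via a character argument combined with a dimension count. The hook length formula gives $\dim \Sp_{(n-i,1^i)} = \binom{n-1}{i}$, matching $\dim \bw^i \Sp_{(n-1,1)} = \binom{n-1}{i}$. For the character, the value of $\chi_{\bw^i \Sp_{(n-1,1)}}$ at $\sigma$ is the $i$-th elementary symmetric function in the eigenvalues of $\sigma$ acting on $\Sp_{(n-1,1)}$. Applying the Murnaghan-Nakayama rule to the hook shape $(n-i,1^i)$, whose border-strip tableaux are easy to enumerate, recovers the same character, yielding the claimed isomorphism.

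For part (3), I would invoke the branching rule together with Frobenius reciprocity. Observing that $\Sp_{(n-1,1)} \oplus \Sp_{(n)} \cong \mathrm{Ind}_{\sym_{n-1}}^{\sym_n}\mathbf{1}$, the projection formula gives
\[
\Sp_\l \otimes \bigl(\Sp_{(n-1,1)} \oplus \Sp_{(n)}\bigr) \;\cong\; \mathrm{Ind}_{\sym_{n-1}}^{\sym_n} \mathrm{Res}_{\sym_{n-1}}^{\sym_n} \Sp_\l.
\]
The branching rule expresses the restriction as the sum of $\Sp_\nu$ over partitions $\nu \vdash n-1$ obtained by removing a corner box from $\l$, and induction adds a box back at any available corner. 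Subtracting the $\Sp_\l$ summand arising from the trivial-representation factor on the left produces the claimed sum $\bigoplus_{\mu \in \mathcal{I}(\l)} \Sp_\mu^{c_\mu}$. The main obstacle is the multiplicity bookkeeping when $\mu = \l$: one must accurately count the remove-then-add operations that return to $\l$, which correspond to choices of a corner box at the bottom of each distinct-size run, yielding $c_\l = (\text{number of distinct parts of }\l) - 1$. The remaining verifications reduce to character computations that are fully worked out in the cited references.
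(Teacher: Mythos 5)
Your proposal is correct, and the paper itself offers no proof: it records these three statements as a ``Fact'' and simply cites James, Fulton--Harris, and Hamermesh. The arguments you sketch (trivial-plus-sum-zero decomposition and character comparison for (1), elementary symmetric functions of eigenvalues versus Murnaghan--Nakayama on hooks for (2), and the projection formula $\Sp_\l \otimes \mathrm{Ind}_{\sym_{n-1}}^{\sym_n}\mathbf{1} \cong \mathrm{Ind}\,\mathrm{Res}\,\Sp_\l$ with the branching rule and the corner-box count giving $c_\l$ for (3)) are precisely the standard proofs found in those references, and your multiplicity bookkeeping in part (3) is accurate.
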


\subsection{The category $\MG R$}

The polynomial ring $R=\kk[x_1, \ldots, x_n]=\Sym_\kk(R_1)$ with homogeneous maximal ideal $\m=(x_1,\ldots,x_n)$ carries the structure of an $\sym_n$-representation, given by extending the {\em permutation representation} structure of $R_1$ (obtained by permuting variables) to the symmetric algebra it generates. This coincides with the explicit action of $\sym_n$ on polynomials given in the beginning of the introduction. 

An {\em $\sym_n$-equivariant} graded $R$-module $M$ is an $R$-module endowed with an $\sym_n$-action which is degree-preserving and for which  the multiplication map $R\otimes M \to M$ is $\sym_n$-equivariant.  Let us denote by $\mod_{\sym_n} R$ the category of finitely generated graded $\sym_n$-equivariant $R$-modules.  
This category is discussed in more detail in \Cref{appendix}. 

If $M$ is an object in $\mod_{\sym_n} R$, then the action of $\sym_n$ extends to the entire minimal free resolution $F$  of $M$; see \cite{Galetto}. If $F_i$ is a free module appearing in $F$, then  $F_i$ is isomorphic in $\MG R$ to $F_i/\m F_i\otimes_\kk R$, hence the action of $\sym_n$ on $F_i$ is determined by its action on the vector spaces $F_i/\m F_i \cong \Tor_i^R(M,\kk)$.

In this paper we will be interested in quotient rings given by principal symmetric ideals, viewed as members of $\mod_{\sym_n} R$. In particular we consider how the $\sym_n$-representation structure of such quotient algebras $R/(f_c)_{\sym_n}$ varies in families parametrized by $c\in \P^{N-1}$ as in  the introduction.  Our approach will be via determining the equivariant structure of the modules $\Tor_i^R(R/(f_c)_{\sym_n},\kk)$.  We show in \Cref{thm:equivariantTors} that the isomorphism class of $\Tor_i^R(R/(f_c)_{\sym_n},\kk)$ in $\MG R$ is constant on a Zariski open set of $\P^{N-1}$ for $n\gg 0$  and we determine its value. 

\begin{ex}
\label{ex:resk}
Set $M=\kk$ with trivial $\sym_n$-action. It is well known that the minimal free resolution of $\kk$ is  the Koszul complex  $K(x_1, \ldots, x_n)$ and that the free modules in this complex are described by $K_i=\bw_\kk^i R_1 \otimes_\kk R$. We conclude that 
\[
\Tor_i^R({{\kk}}, {\kk})  \cong K_i/\m K_i=\bw_\kk^i R_1\otimes_\kk (R/\m R)\cong\bw_\kk^i R_1.
\]
\Cref{prop:propertiesofSpecht}~(1) and (2) yield (noting also that $\Sp_{(n)}$ is the trivial $1$-dimensional representation) that 
\begin{eqnarray}
\label{eq:equivTorkk}
\begin{split}
\Tor_i^R({{\kk}}, {\kk}) & \cong & \bw_\kk^i R_1= \bw_\kk^i \left(  \Sp_{(n)} \oplus \Sp_{(n-1,1)}\right)=\bigoplus_{j=0}^i \left(\bw_\kk^j \Sp_{(n)}\otimes_{\kk} \bw_\kk^{i-j} \Sp_{(n-1,1)} \right) \\
&=&  \bw_\kk^{i} \Sp_{(n-1,1)} \oplus  \bw_\kk^{i-1} \Sp_{(n-1,1)} = \Sp_{(n-i,1^i)} \oplus  \Sp_{(n-i+1,1^{i-1})}.
\end{split}
\end{eqnarray}
\end{ex}

\begin{ex}
\label{ex:resmd}
Fix an integer $d\geq 1$ and set $M=\m^d$. The minimal free resolution $\mathcal L$ of $M$ over $R$ can be deduced from the more general construction given in \cite{BE}. 
The Koszul complex $K(x_1, \ldots, x_n)$ splits into graded strands of the form
\begin{equation}
\label{eq:strand}
\cdots \to \bw_\kk^{i+1} R_1 \otimes_\kk R_{j-1} \to \bw_\kk^{i} R_1 \otimes_\kk R_{j} \to \bw_\kk^{i-1} R_1 \otimes_\kk R_{j+1}\to \cdots.
\end{equation}
For our purposes it suffices to note that the free module appearing in homological degree $i$ the resolution $\mathcal L$ of $\m^d$ can be described as   
\begin{eqnarray}
\label{eq:Lasker}
\mathcal L_{i,d} &=& \ker\left (\bw^{i}_\kk R_1 \otimes_\kk R_{d} \to \bw_\kk ^{i-1} R_1 \otimes_\kk R_{d+1}\right)\otimes_\kk R\\
&=&{\rm im}\left (\bw^{i+1}_\kk R_1 \otimes_\kk R_{d-1} \to \bw_\kk ^{i} R_1 \otimes_\kk R_{d}\right)\otimes_\kk R.
\end{eqnarray}
where the map above comes from the Koszul complex~\eqref{eq:strand}. 
The differentials in the complex $\mathcal L$ are induced, viewing  $\mathcal L_{i,d}$ as a submodule of  by $ \bw_\kk ^{i} R_1 \otimes_\kk R_{d}\otimes_\kk R$, by $\partial\otimes {\rm id}$ where $\partial_i:\bw_\kk ^{i} R_1 \to \bw_\kk ^{i} R_1$ is the differential of the Koszul complex and ${\rm id}$ is the identity map of $R_{d}\otimes_\kk R$.
The module $M$ and indeed the entire resolution $\mathcal L$ are equipped with an action of the general linear group $\GL_n(\kk)$ extended from the natural action of $\GL_n(\kk)$ on $R_1$.  The $\GL_n(\kk)$-module 
\[ 
\SS_{(j, 1^{i})}=\ker\left (\bw_\kk^{i} R_1 \otimes_\kk R_{j} \to \bw_\kk^{i-1} R_1 \otimes_\kk R_{j+1}\right), 
\]
where the map is the differential of the Koszul complex, is a $\GL_n(\kk)$-representation known as the {\em Schur module} corresponding to the partition $(j, 1^i)$; see \cite[Exercise 6.20 (c)]{FultonHarris}.\footnote{The indexing convention for Schur modules used here differs from that in  \cite[Example (2.1.3)(h)]{Weyman} by transposing partitions.} 

From \eqref{eq:Lasker} we obtain the isomorphisms
\begin{equation}
\label{eq:LasSchur}
\Tor_i^R(\m^d,{\kk}) \cong \mathcal L_{i,d}/\m \mathcal L_{i,d} \cong \SS_{(d, 1^i)}
\end{equation}
of $\GL_n(\kk)$-modules.
The particular case $\mathcal L_0/\m \mathcal L_0=\SS_{(d)}$ identifies $R_d\cong \SS_{(d)}$ in $\MG R$. 
\end{ex}

While \eqref{eq:LasSchur} identifies the structure of $\mathcal L$ as a $\GL_n(\kk)$-representation, we are instead interested in its structure as a $\sym_n$-representation, where we view $\sym_n$ as a subgroup of $\GL_n(\kk)$ by its usual embedding  as the subgroup of permutation matrices. To describe this, we need additional notation. Let $\SS_\l$ denote the Schur module determined by a partition $\l$. Up to isomorphism, the Schur modules $\SS_\l$ for $\l$ a partition with length $\leq n$ parametrize the irreducible polynomial representations of $\GL_n(\kk)$. The construction of Schur modules in this generality can be found in e.g.~\cite[\S 6.1]{FultonHarris}.  If  a partition $\nu = (\nu_1,\nu_2,\ldots ,\nu_s)$ has the property that $n \geq \nu_1 +\lvert \nu \rvert$ (note in particular this means $\nu$ is a partition of an integer strictly smaller than $n$), we may define $\nu(n) :=(n-\lvert \nu \rvert, \nu_1,\nu_2,\ldots ,\nu_s)$, which is a partition of $n$. 
We denote by $\Res^{\GL_n(\kk)}_{\sym_n}$ the restriction operator which takes a $\GL_n(\kk)$-representation and considers it as a $\sym_n$-representation. 
Let $\l$ be a partition. Applying the restriction to the corresponding Schur module $\SS_\l$ we obtain a formula 
\begin{equation}
\label{eq:res}
\Res^{\GL_n(\kk)}_{\sym_n} \SS_\l =\bigoplus_{\nu_1 + \lvert \nu \rvert \leq n} \left(\Sp_{\nu(n)}\right)^{\oplus a_\l^{\nu(n)}}\,,
\end{equation}
where the $a_\l^{\nu(n)}$ are by definition the multiplicities with which the relevant Specht modules appear in the decomposition of the LHS of~\eqref{eq:res} into irreducible $\sym_n$-representations. In particular,  $a_\l^{\nu(n)}$ are non-negative integers by definition. Littlewood \cite{Littlewood} (see also \cite{ST} and \cite[Exercise 7.74]{Stanley}) gave a formula for the multiplicities $a_\l^{\nu(n)}$ by means of plethysm, recorded below in  \eqref{eq:plethysm}.

\begin{fact}({\color{blue} \cite{Littlewood}}) 
\label{fact:2}
\begin{enumerate}
\item The  multiplicities for the irreducible decomposition of the restricted representation in \eqref{eq:res} are given by
\begin{equation}
\label{eq:plethysm}
 a_\l^{\nu(n)} =\langle s_\l,s_{\nu(n)}[1+h_1 +h_2 +\cdots]\rangle\,,
 \end{equation}
where $s_\l(x_1,\cdots,x_n) = {\rm char}(\SS_\l)$ is the Schur polynomial, the character of the Schur module $\SS_\l$, and, for a non-negative integer $k$, we have 
$$h_k :=\sum_{1\leq i_1\leq \cdots\leq i_k\leq n} x_{i_1}x_{i_2}\cdots x_{i_k}$$
is the complete homogeneous symmetric polynomial. The notation $s_{\nu(n)}[1+h_1+h_2+\cdots ]$ stands for plethysm. Specifically, $g[f]$ is the result of substituting the monomials in the support of $f$ into $g$. The Hall  inner product for characters is determined by $\langle s_\l, s_\mu \rangle  = \delta_{\l,\mu}$.
\item The limit  $a_\l^{\nu}:=\lim_{n\to \infty} a_\l^{\nu(n)}$ exists. 
\item For $\l,\nu$ partitions with $|\l|\leq|\nu|$, one has $a_\l^{\nu}=\delta_{\l,\nu}$.
\end{enumerate}
\end{fact}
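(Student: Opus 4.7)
The plan is to establish each of the three claims by leveraging the interplay between the representation theories of $\GL_n(\kk)$ and $\sym_n$ via the Frobenius characteristic map.

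For part (1), I would begin with the standard observation that the character of $\SS_\l$ evaluated at a permutation matrix $\sigma \in \sym_n \subset \GL_n(\kk)$ equals $s_\l$ evaluated at the eigenvalues of $\sigma$. Since the eigenvalues of a $k$-cycle are the $k$-th roots of unity, these are determined by the cycle type of $\sigma$. The series $1 + h_1 + h_2 + \cdots$ is the Frobenius characteristic of the graded permutation module $\bigoplus_{d \geq 0} R_d$, viewed formally as a sum in the ring of symmetric functions. Expressing the restricted character on the symmetric-function side and applying the Frobenius characteristic converts the multiplicities $a_\l^{\nu(n)}$ into the plethystic pairing recorded in~\eqref{eq:plethysm}. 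I would essentially reproduce (or cite) the classical derivation of Littlewood \cite{Littlewood} and the treatment in Stanley \cite[Exercise 7.74]{Stanley}.

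For part (2), I would analyze the dependence of $s_{\nu(n)}[1 + h_1 + h_2 + \cdots]$ on $n$ with $\nu$ fixed. As $n$ grows, $\nu(n)$ differs from $\nu(n-1)$ only by the length of its first row, and the additional boxes contribute only to terms of degree strictly greater than $|\l|$ in the plethystic expansion; since pairing with $s_\l$ isolates the degree-$|\l|$ component, the multiplicity $a_\l^{\nu(n)}$ stabilizes for $n$ sufficiently large, producing the limit $a_\l^{\nu}$. An alternative route is to invoke representation stability, viewing the assignment $[n] \mapsto \Res^{\GL_n(\kk)}_{\sym_n} \SS_\l$ as a finitely generated FI-module in the sense of Church--Ellenberg--Farb.

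For part (3), I would perform a degree analysis of the plethysm. The lowest-degree homogeneous component of $s_{\nu(n)}[1 + h_1 + h_2 + \cdots]$ is obtained by substituting copies of $1$ into as many slots of $s_{\nu(n)}$ as possible; a computation using the Jacobi--Trudi identity shows that the resulting contribution is exactly $s_\nu$, of degree $|\nu|$, with no lower-degree terms. Hence the Hall pairing with $s_\l$ vanishes when $|\l| < |\nu|$ by degree count, and reduces to $\langle s_\l, s_\nu\rangle = \delta_{\l,\nu}$ when $|\l| = |\nu|$ by orthonormality of Schur functions.

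The main obstacle I anticipate lies in carrying out the degree-extraction arguments of parts (2) and (3) rigorously: the plethysm $s_{\nu(n)}[1+h_1+h_2+\cdots]$ is an infinite symmetric-function sum after expansion, and pinning down the lowest-degree contributions requires careful bookkeeping via the Jacobi--Trudi identity or the Littlewood--Richardson rule. However, because the degree of any Schur component is bounded below in terms of $|\nu|$, the combinatorial identification of the lowest-degree terms should be manageable.
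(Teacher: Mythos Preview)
The paper does not supply a proof of this statement: it is recorded as a \emph{Fact} attributed to Littlewood \cite{Littlewood} (with pointers also to \cite{ST} and \cite[Exercise~7.74]{Stanley}), and is used as a black box. Hence there is no ``paper's own proof'' against which to compare your proposal.

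That said, your outline is a reasonable sketch of how the classical arguments go. Part~(1) is exactly the content of the cited references, and you correctly identify that it amounts to translating the character of the restricted representation through the Frobenius characteristic. For part~(2), your stabilization argument is on the right track, though the sentence ``the additional boxes contribute only to terms of degree strictly greater than $|\l|$'' is imprecise as stated and would need to be made rigorous; the FI-module route you mention is cleaner. For part~(3), the degree analysis is the right strategy, but your claim that the lowest-degree homogeneous component of $s_{\nu(n)}[1+h_1+h_2+\cdots]$ is exactly $s_\nu$ is the heart of the matter and is not obvious---it requires either the branching interpretation (the degree-$|\nu|$ piece computes the multiplicity of $\Sp_{\nu(n)}$ in $\Res^{\GL_n}_{\sym_n}\SS_\nu$ in the stable range, which is $1$ by a direct argument) or a careful Jacobi--Trudi computation as you indicate. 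You have correctly flagged this as the main technical point.
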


\begin{rem} 
It is a long-standing question to give a combinatorial formula for the coefficients $a_\lambda^{\nu(n)}$ appearing above; it is sometimes called ``the restriction problem''. Littlewood's results in Fact~\ref{fact:2} reformulates the question in terms of plethysm, but a combinatorial formula is not known for the RHS of~\eqref{eq:plethysm}. However, partial results are known for very special cases. We do not give an exhaustive list here, but some recent progress for the cases of the trivial and the sign representations can be found in \cite[Theorems B and C]{NPPS2022}.  In another direction, a symmetric function approach to the problem was given by Orellana and Zabrocki \cite{OZ2021}. 
\end{rem}

 While they are not mentioned explicitly in our main theorem stated in \cref{s:prelim}, the coefficients  $a_\lambda^{\nu(n)}$ appear  in \Cref{thm:equivariantTors}, and we use them to describe the $\sym_n$-equivariant structure of the minimal free resolution of a general principal symmetric ideal. This will be done, in part,  by means of \cref{lem:Lirreps} below, which provides 
the $\sym_n$-equivariant structure of the resolutions of powers of the maximal ideal. 

\begin{lem}
\label{lem:Lirreps}
The $\sym_n$-equivariant structure for the minimal free resolution of $\m^d$  is given by
\begin{equation}
\label{eq:tormdequivsmalli}
\Tor_i^R(\m^d,\kk)= \bigoplus_{\substack{|\nu|\leq i+d\\\nu_1+|\nu|\leq n}}\left(\Sp_{\nu(n)}\right)^{\oplus a_{(d,1^i)}^{\nu(n)}},
\end{equation}
where the multiplicities are given by the plethysm \eqref{eq:plethysm}.
\end{lem}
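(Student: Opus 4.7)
The strategy is to combine two ingredients already in place: the identification of $\Tor_i^R(\m^d,\kk)$ as a Schur module from Example~\ref{ex:resmd}, and the general restriction formula \eqref{eq:res} from $\GL_n(\kk)$-representations to $\sym_n$-representations.

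First I would record that the natural action of $\sym_n$ on $R = \Sym_\kk(R_1)$ agrees with the restriction of the natural $\GL_n(\kk)$-action on $R_1$ along the permutation embedding $\sym_n \hookrightarrow \GL_n(\kk)$. Hence the $\sym_n$-equivariant minimal free resolution $\mathcal{L}$ of $\m^d$ is the restriction of the $\GL_n(\kk)$-equivariant resolution constructed in Example~\ref{ex:resmd}. Combined with the $\GL_n(\kk)$-equivariant identification \eqref{eq:LasSchur}, namely $\Tor_i^R(\m^d, \kk) \cong \SS_{(d,1^i)}$, this yields the $\sym_n$-module isomorphism
\[
\Tor_i^R(\m^d,\kk) \;\cong\; \Res^{\GL_n(\kk)}_{\sym_n} \SS_{(d,1^i)}.
\]

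Next, I would apply the restriction formula \eqref{eq:res} with $\lambda = (d,1^i)$ to obtain
\[
\Res^{\GL_n(\kk)}_{\sym_n} \SS_{(d,1^i)} \;=\; \bigoplus_{\nu_1 + |\nu| \leq n} \bigl(\Sp_{\nu(n)}\bigr)^{\oplus a_{(d,1^i)}^{\nu(n)}}.
\]
This already matches the claimed formula up to the additional constraint $|\nu| \leq i+d$ appearing in the lemma's indexing set, so the remaining task is to check that $a_{(d,1^i)}^{\nu(n)} = 0$ whenever $|\nu| > i+d$. This constraint is only cosmetic for the truth of the decomposition (the excluded summands contribute zero), but it gives the effective range of nonzero multiplicities and will be relevant for later applications.

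For the vanishing $a_{(d,1^i)}^{\nu(n)} = 0$ when $|\nu| > i+d$, I would use the plethysm formula \eqref{eq:plethysm}. The Schur function $s_{(d,1^i)}$ is homogeneous of degree $i+d$, so the Hall inner product picks out only the degree-$(i+d)$ component of $s_{\nu(n)}[1 + h_1 + h_2 + \cdots]$. A direct degree count in this plethystic expansion shows that contributions require most of the $n$ variables of $s_{\nu(n)}$ to be substituted by the constant $1$, with only $i+d$ degree-worth of nontrivial monomials; this forces the nontrivial rows of $\nu(n)$ (of total size $|\nu|$) to be at most $i+d$. Alternatively one can invoke Fact~\ref{fact:2}(3), which gives $a_{(d,1^i)}^{\nu} = \delta_{(d,1^i),\nu}$ for $|\nu| \geq |(d,1^i)| = i+d$ in the stable limit, and then verify (or cite) that the inequality $|\nu| \leq i+d$ bounds the support of $a_{(d,1^i)}^{\nu(n)}$ for all relevant $n$.

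The main obstacle is precisely this last vanishing step for finite $n$: the identification as a $\GL_n(\kk)$-module is immediate from Example~\ref{ex:resmd}, and the restriction formula is assumed, so all the technical work is isolated in controlling the plethysm coefficients. Once this degree-counting verification is in place, the lemma follows by stringing the three isomorphisms above together.
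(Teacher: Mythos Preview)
Your approach is essentially the same as the paper's: combine \eqref{eq:LasSchur} with the restriction formula \eqref{eq:res}, then justify the extra constraint $|\nu|\le i+d$. The paper dispatches that last step in a single line by invoking Fact~\ref{fact:2}(3) directly (for $|\lambda|<|\nu|$ one has $a_\lambda^\nu=0$), whereas you flag the finite-$n$ versus stable-limit distinction and propose a degree count in the plethysm as an alternative; this extra care is reasonable but not something the paper dwells on.
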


\begin{proof}
The desired  $\sym_n$-irreducible decomposition \eqref{eq:tormdequivsmalli} follows from \eqref{eq:LasSchur} and  \eqref{eq:res}. To see the restrictions on $\nu$ in the summation, note that if $|(d, 1^{i})|=i+d<|\nu|$ then \Cref{fact:2}~(3) implies that the multiplicity of  $\Sp_{\nu(n)}$  in   \eqref{eq:tormdequivsmalli} is zero.
\end{proof}

\section{Duality and compressed algebras}
\label{s:duality}

We give here  an exposition of  the Macaulay inverse system and Matlis/local duality, as needed toward our main results. We use Bruns and Herzog \cite{BH} as a reference for standard duality facts.  

In this section, $R=\kk[x_1, \ldots, x_n]$ is always taken to be a polynomial ring over a field $\kk$, with $n$ variables in degree $1$.  When regarded as a graded ring itself, $\kk$ is considered to be concentrated in degree zero.  We work with graded $R$ or  $\kk$-modules.  When we write $\Hom_R(-,-)$, $\Hom_\kk(-,-)$, $\Ext_R(-,-)$ or $\Tor^R(-,-)$, we understand the graded versions of the functors, as described in \cite[Section 1.5]{BH} and explained further in \Cref{appendix}. 
In particular, if $U$, $V$ are graded $R$-modules, then $\Tor^R(U,V)$ is a bigraded module, and $\Tor^R_i(U,V)_j$  denotes the component in homological degree $i$ and internal degree $j$. 

\subsection{Duality}
If $U$ is graded $R$-module, we define the graded $R$-module
$$
U^* :=\Hom_R(U,R) \,.
$$
On the other hand, viewing $U$ as a  graded $\kk$-module, we may also define the graded $\kk$-module
$$
U^\vee :=\Hom_\kk(U,\kk) \qquad \text{where}\qquad  (U^\vee)_i=\Hom_\kk(U_{-i},\kk)
$$
for all $i$.  Note that $U^\vee$ has a graded $R$-module structure given by 
\begin{equation}
\label{e:module-structure}
(a \, \varphi)(u) := \varphi(au)\qquad\text{for $a\in R$, $\varphi\in U^\vee$, $u\in U$\,.}
\end{equation}

We recall below the results that allow us to use (graded) Matlis duality.  Since $R = \kk[x_1,\ldots,x_n]$, considered with its (graded) maximal ideal $\mathfrak{m} = (x_1,\ldots,x_n)$, is a Noetherian complete $^*$local ring (in the sense of \cite[1.5.13]{BH}) and is Cohen-Macaulay of dimension $n$, the following holds. 

\begin{prop}[{\cite[Proposition 3.6.16, Example 3.6.10, Theorem 3.6.17(c), Theorem 3.6.19]{BH}}
\label{p:BHresults}]
With notation as above, 
\begin{enumerate}[\quad\rm(1)]
\item $R^\vee$ is the (graded) injective hull of $\kk$.
\item  $U^\vee\cong \Hom_R(U, R^\vee)$. 
\item The functor $(-)^\vee$ establishes an anti-equivalence between the category of graded artinian $R$-modules and that of finitely generated graded $R$-modules. In particular, the functor $(-)^\vee$ is exact,  and $U$ is artinian if and only if $U^\vee$ is finitely generated. 
\item If  $U$ is artinian,  then  $U^\vee\cong \Ext^n_R(U, \omega_R)$, where $\omega_R=R(-n)$ is the (graded) canonical module of $R$.
\end{enumerate} 
\end{prop}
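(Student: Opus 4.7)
The plan is to verify that the polynomial ring $R = \kk[x_1,\ldots,x_n]$ with its standard positive grading satisfies all hypotheses of the quoted Bruns--Herzog results, and then to invoke them. The first step is to observe that since $R_0 = \kk$ is a field and $R$ is positively graded and Noetherian, $R$ is a graded (i.e.\ $^*$)local ring in the sense of \cite[1.5.13]{BH} with $^*$maximal ideal $\mathfrak{m}=(x_1,\ldots,x_n)$. Moreover $R$ is Cohen--Macaulay of Krull dimension $n$, and a direct calculation (or \cite[Corollary 3.6.14]{BH}) identifies the graded canonical module as $\omega_R \cong R(-n)$. Once this setup is in place, every conclusion of the proposition is a direct application of a cited result.

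For part (1), I would unpack the definition of $R^\vee=\Hom_\kk(R,\kk)$ with the $R$-action \eqref{e:module-structure} and check the two defining properties of the graded injective hull of $\kk$: namely that $R^\vee$ is $\mathfrak{m}$-torsion and essential over its socle, and that $\Soc(R^\vee) \cong \kk$ is one-dimensional, concentrated in degree $0$, generated by the linear functional dual to $1 \in R$. These are precisely the conditions verified in \cite[Proposition 3.6.16]{BH} (see also \cite[Example 3.6.10]{BH}). Part (2) is then an application of tensor--hom adjunction: because $R^\vee = \Hom_\kk(R,\kk)$, one has natural graded isomorphisms
\[
\Hom_R(U, R^\vee) \;=\; \Hom_R\bigl(U,\Hom_\kk(R,\kk)\bigr) \;\cong\; \Hom_\kk(U\otimes_R R,\kk) \;=\; U^\vee,
\]
and this identification is compatible with the $R$-module structures.

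For part (3), the anti-equivalence is graded Matlis duality, which is \cite[Theorem 3.6.17(c)]{BH}; exactness of $(-)^\vee$ follows from part (2) together with injectivity of $R^\vee$, while the anti-equivalence amounts to verifying that finitely generated modules dualize to artinian ones and conversely, with the biduality map being an isomorphism. For part (4), I would apply graded local duality \cite[Theorem 3.6.19]{BH} in the form
\[
\Ext^i_R(U,\omega_R) \;\cong\; H^{n-i}_{\mathfrak{m}}(U)^\vee.
\]
Because $U$ is artinian it is $\mathfrak{m}$-torsion, so $H^0_{\mathfrak{m}}(U)=U$ and $H^j_{\mathfrak{m}}(U)=0$ for $j>0$; substituting $i=n$ yields $\Ext^n_R(U,\omega_R)\cong U^\vee$.

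There is no real mathematical obstacle here, since all the substance is in the cited Bruns--Herzog results. The main thing to be careful about is the graded bookkeeping: checking that the $^*$local framework of \cite[Section 1.5]{BH} applies to our positively graded $R$, keeping track of the grading shift $\omega_R = R(-n)$, and ensuring that the graded versions of $\Hom$ and $\Ext$ (as laid out in \Cref{appendix}) are the ones used throughout. Beyond this, the proposition is a curated summary of standard material, assembled here in the form needed for the subsequent sections.
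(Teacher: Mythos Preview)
Your proposal is correct and, in fact, goes further than the paper does: the paper gives no proof of this proposition at all, simply citing the relevant results from Bruns--Herzog as standard background. Your sketch correctly identifies that $R$ is $^*$local Noetherian Cohen--Macaulay of dimension $n$ with $\omega_R = R(-n)$, and your derivations of parts (2) via adjunction and (4) via local duality on an $\mathfrak{m}$-torsion module are accurate and appropriately careful about the graded bookkeeping.
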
 

We set $S =R^\vee$. Recall that $S_i=\Hom_\kk(R_{-i},\kk)$ for all $i$. 
Let  $y_1^{(e_1)}y_2^{(e_2)}\cdots y_n^{(e_n)}$ denote the basis element of $S$ that is dual to the monomial $x_1^{e_1}x_2^{e_2}\dots x_n^{e_n}$ with $e_i\ge 0$ for all $i$, with the convention that we will write $y_i$ instead of $y_i^{(1)}$ and we may omit $y_i^{(e_i)}$ when $e_i=0$, unless $e_i=0$ for all $i$, in which case we will write $1$ for the basis element dual to  $1\in R$ (e.g.~ we may write $y_1y_3$ instead of $y_1^{(1)}y_2^{(0)}y_3^{(1)}$).\footnote{
Although we are not making use of a multiplicative structure on $S$, we note that $S$ can be identified to either the ring of inverse polynomials $\kk[x_1^{-1}, \dots, x_n^{-1}]$ or to a divided power algebra $D^\kk(V^*)$ where $V=\text{Span}_\kk(x_1, \dots, x_n)$.  The notation that we adopted for the dual basis is in line with the latter. } The $R$-module structure of $S$ can be described equivalently  as a {\it contraction}, defined by extending the following action on monomials by linearity in both arguments: 
\begin{equation}
\label{eq:contraction}
x_1^{d_1}\cdots x_n^{d_n}\circ y_1^{(e_1)}\cdots y_n^{(e_n)}=
\begin{cases}
y_1^{(e_1-d_1)}\cdots y_n^{(e_n-d_n)} & \text{if } e_i-d_i\geq 0 \text{ for all } i\\
0 & \text{otherwise\,. }
\end{cases}
\end{equation}
We emphasize that we regard $S$ as a  {\it graded} injective hull of $\kk$, and, as such, the basis element $y_1^{(e_1)}\cdots y_n^{(e_n)}$ has degree equal to $-\sum_{i=1}^n e_i$. 

As discussed in \cref{rem:MGRproperties}(4), the natural action of $\sym_n$ on $S$ is given by 
$$
\sigma\cdot s(t)=s(\sigma^{-1}\cdot t)\qquad\text{for $\sigma\in \sym_n$, $t\in R$, $s\in S$\,.}
$$
We record two important aspects of this action below.

\begin{lem}
\label{l:actionS}
With notation as above, the following hold. 
\begin{enumerate}[\quad\rm(1)]
\item For each dual basis element $y_1^{(e_1)}\cdots y_n^{(e_n)}\in S$ and $\sigma\in \sym_n$, 
\begin{equation*}
\label{e:Spermute}
\sigma\cdot  (y_1^{(e_1)}\cdots y_n^{(e_n)})= y_{\sigma(1)}^{(e_1)}\cdots y_{\sigma(n)}^{(e_n)}\,.
\end{equation*}
\item For all $i\ge 0$, $\sigma\in\sym_n$, $r\in R_{i}$ and $s\in S_j$, we have: 
$$
\sigma\cdot (r\circ s)=r\circ s   \qquad\text{when $i+j\ge 0$}\,.
$$
\end{enumerate}
\end{lem}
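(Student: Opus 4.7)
The plan is to verify (1) by unwinding the definition $(\sigma\cdot s)(t)=s(\sigma^{-1}\cdot t)$ recalled just above the statement and comparing both sides as $\kk$-linear functionals on $R$, and then to derive (2) from (1) together with the fact that $S$ is concentrated in non-positive internal degrees.

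For (1), set $s = y_1^{(e_1)}\cdots y_n^{(e_n)}$. By construction, $s$ is the $\kk$-linear functional on $R_{\sum e_i}$ which sends $x_1^{e_1}\cdots x_n^{e_n}$ to $1$ and all other monomials to $0$. I would evaluate $\sigma\cdot s$ on an arbitrary monomial $t = x_1^{d_1}\cdots x_n^{d_n}$ of the same degree (monomials of other degrees give $0$ on both sides for degree reasons). From the permutation action on $R$ recalled in the introduction, one computes
\[
\sigma^{-1}\cdot(x_1^{d_1}\cdots x_n^{d_n}) \;=\; x_{\sigma^{-1}(1)}^{d_1}\cdots x_{\sigma^{-1}(n)}^{d_n} \;=\; \prod_{k} x_{k}^{d_{\sigma(k)}},
\]
after the substitution $k=\sigma^{-1}(i)$. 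Hence $(\sigma\cdot s)(t)=1$ precisely when $d_{\sigma(k)}=e_k$ for every $k$, equivalently $d_j=e_{\sigma^{-1}(j)}$ for every $j$. On the other hand, the proposed right-hand side $y_{\sigma(1)}^{(e_1)}\cdots y_{\sigma(n)}^{(e_n)}$ is by definition dual to the monomial $x_{\sigma(1)}^{e_1}\cdots x_{\sigma(n)}^{e_n} = \prod_j x_j^{e_{\sigma^{-1}(j)}}$, so it also sends $t$ to $1$ exactly under the same condition and to $0$ otherwise. Matching values on the monomial basis then gives (1).

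For (2), observe that $r\circ s$ is homogeneous of degree $i+j$ in $S$. Since $S$ is graded in non-positive degrees, the hypothesis $i+j\ge 0$ leaves only two possibilities: either $i+j>0$, in which case $r\circ s=0$ and the claim is vacuous, or $i+j=0$, in which case $r\circ s\in S_0$. Thus it suffices to verify that $\sym_n$ fixes $S_0$ pointwise. But $S_0$ is the $\kk$-line spanned by the basis element $1$ corresponding to $e_1=\cdots=e_n=0$; applying part (1) with all exponents zero yields $\sigma\cdot 1 = y_{\sigma(1)}^{(0)}\cdots y_{\sigma(n)}^{(0)} = 1$, and (2) follows by $\kk$-linearity.

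The only real subtlety is keeping careful track of the $\sigma$ versus $\sigma^{-1}$ index shifts in (1); once the substitution $j=\sigma(k)$ is made consistently on both sides, each reduces to the indicator function of the same single monomial and no further computation is needed. Part (2) is then an immediate degree-counting corollary of the special case of (1) where all exponents vanish.
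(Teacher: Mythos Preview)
Your proof is correct and follows essentially the same approach as the paper's: both verify (1) by evaluating each side on an arbitrary monomial and matching the resulting indicator conditions, and both deduce (2) from the degree observation that $r\circ s\in S_{i+j}$ lies in either $0$ or $S_0=\kk$. The only cosmetic difference is that you invoke (1) with all exponents zero to see that $\sigma\cdot 1=1$, whereas the paper simply notes that the $\sym_n$-action on $S_0=\kk$ is trivial.
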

\begin{proof}
Indeed, to show (1), it suffices to prove the equality when evaluating at an arbitrary basis element $x_1^{d_1}\cdots x_n^{d_n}$ of $R$. We have: 
\begin{align*}
(\sigma\cdot y_1^{(e_1)}\cdots y_n^{(e_n)})(x_1^{d_1}\cdots x_n^{d_n})&= ( y_1^{(e_1)}\cdots y_n^{(e_n)})(\sigma^{-1}\cdot x_1^{d_1}\cdots x_n^{d_n})\\
&=(y_1^{(e_1)}\cdots y_n^{(e_n)})( x_{\sigma^{-1}(1)}^{d_1}\cdots x_{\sigma^{-1}(n)}^{d_n})\\
&=(y_1^{(e_1)}\cdots y_n^{(e_n)})( x_{1}^{d_{\sigma(1)}}\cdots x_{n}^{d_{\sigma(n)}})\\
&=\begin{cases}
1& \text{if $e_i=d_{\sigma(i)}$  for all $i$}\\
0 &\text{otherwise. }
\end{cases}
\end{align*}
On the other hand, we have 
\begin{align*}
( y_{\sigma(1)}^{(e_1)}\cdots y_{\sigma(n)}^{(e_n)})(x_1^{d_1}\cdots x_n^{d_n})&=( y_1^{(e_{\sigma^{-1}(1)})}\cdots y_{n}^{(e_{\sigma^{-1}(n)})})(x_1^{d_1}\cdots x_n^{d_n})=\begin{cases}
1& \text{if $e_{\sigma^{-1}(i)}=d_{i}$  for all $i$}\\
0 &\text{otherwise. }
\end{cases}
\end{align*}
Since $e_{\sigma^{-1}(i)}=d_{i}$ for all $i$ if and only if $e_i=d_{\sigma(i)}$ for all $i$, these two computations prove (1).  

To explain (2), observe that $r\circ s\in S_{i+j}$ and $S_{i+j}=0$ when $i+j>0$, while $S_{i+j}=\kk$ when $i+j=0$. Thus the action of $\sym_n$ on $S_{i+j}$ is trivial in these cases. 
\end{proof}

We expand the notation introduced for modules to complexes, as well. 

\begin{notation} For any commutative ring $Q$, if  $X$ is a complex of $Q$-modules and $M$ is a $Q$-module,  then we denote by $\Hom_Q(X, M)$ the complex $Y$ of $Q$-modules, where 
\[
Y_i :=\Hom_Q(X_{-i},M)\qquad\text{and}\qquad (\partial^{Y}(f))(x) :=(-1)^i f(\partial^X(x))
\]
for $f\in Y_i$ and $x\in X_{-i+1}$.  We set 
\[
{X}^* :=\Hom_R(X, R)\qquad\text{and}\qquad {X}^\vee :=\Hom_\kk(X, \kk)\,
\]
as complexes, 
noting that the latter is a complex of $R$-modules as well, and \cref{p:BHresults}(2) gives
\begin{equation}
\label{e:iso}
{X}^\vee \cong \Hom_R(X, S)\,.
\end{equation}
Also, for $a\in \mathbb Z$, we let $\Sigma^a X$ denote the complex with 
\[
(\Sigma^a X)_i:=X_{i-a}\qquad\text{and}\qquad \partial^{\Sigma^a X} :=(-1)^a\partial^{X}.
\]
Observe that 
\begin{equation}
\label{e:switch}
(\Sigma^aX)^\vee = \Sigma^{-a}({X}^\vee)\,. 
\end{equation}
\end{notation}

We reproduce below, with more detail,  part of the statement of \cite[Proposition 2.2]{Boij}, which is a direct consequence of the isomorphisms in \cref{p:BHresults}. 

\begin{prop}{\cite[Proposition 2.2 (iv),(v)]{Boij}}
\label{Boij}
Let $U$ be an artinian $R$-module and suppose 
$$
F: \quad F_0\xleftarrow{\partial_0} F_1 \cdots \leftarrow F_i\xleftarrow{\partial_i} F_{i+1}\leftarrow\cdots\leftarrow F_n\leftarrow 0
$$
is a graded minimal free resolution of $U$ over $R$. Then 
$$
\Sigma^{n}\Hom_R(F,\omega_R) :\quad  {F_n}^*(-n)\leftarrow\cdots \leftarrow {F_{i+1}}^*(-n)\leftarrow {F_{i}}^*(-n)\leftarrow\cdots\leftarrow {F_0}^*(-n)\leftarrow 0
$$
is a graded minimal free resolution of $U^\vee$ over $R$. In particular 
$$
\Tor_i^R(U,\kk)_j \cong 
\Hom_\kk(\Tor_{n-i}^R(U^\vee,\kk)_{n-j},\kk)
$$
for all $i$ with $0\le i\le n$ and all $j\in \mathbb Z$. 
\end{prop}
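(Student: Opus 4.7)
The plan is to apply the functor $\Hom_R(-,\omega_R)$ to the minimal free resolution $F$ of $U$ and, after an appropriate shift, identify the result as a minimal free resolution of $U^\vee$. First, since $U$ is artinian it has finite length, so $\depth_R U = 0 = \dim_R U$. The Auslander--Buchsbaum formula over the regular ring $R$ of global dimension $n$ yields $\pd_R U = n$, so $F$ has nonzero terms precisely in homological degrees $0, 1, \ldots, n$ and $\Hom_R(F, \omega_R)$ is a finite complex of free modules.

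Next, the cohomology of $\Hom_R(F, \omega_R)$ in cohomological degree $i$ computes $\Ext^i_R(U, \omega_R)$. By \cref{p:BHresults}(4), which encodes local duality for the Cohen--Macaulay ring $R$, these groups vanish for $i \neq n$ and $\Ext^n_R(U, \omega_R) \cong U^\vee$. Hence $\Hom_R(F, \omega_R)$ is acyclic except in top cohomological degree $n$, where its cohomology is $U^\vee$. Re-indexing via $\Sigma^n$ to convert the cochain complex into a chain complex sends $F_i^*(-n)$ from cohomological degree $i$ to homological degree $n-i$, and turns the top cohomology into $H_0 = U^\vee$. This produces exactly the augmented chain complex displayed in the proposition. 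Minimality follows because each differential of $F$ has matrix entries in $\m$; transposition preserves this property, and the overall sign $(-1)^n$ introduced by $\Sigma^n$ is irrelevant for whether entries lie in $\m$.

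For the $\Tor$ isomorphism I would simply read off the two minimal resolutions. Writing $F_i = \bigoplus_k R(-d_k^{(i)})$, minimality gives $\dim_\kk \Tor_i^R(U, \kk)_j = \#\{k : d_k^{(i)} = j\}$; the corresponding term of the dual resolution is $F_i^*(-n) = \bigoplus_k R(d_k^{(i)}-n)$ in homological degree $n-i$, and it contributes $\dim_\kk \Tor_{n-i}^R(U^\vee, \kk)_{n-j} = \#\{k : n-d_k^{(i)} = n-j\}$, which coincides with the first count. Finite-dimensional $\kk$-duality $V \cong \Hom_\kk(V, \kk)$ upgrades this dimension match to the claimed natural isomorphism.

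The main obstacle I anticipate is essentially organizational: keeping track of the sign, degree-shift, and re-indexing conventions of $\Sigma^n$ together with those of $\Hom_R(-,\omega_R)$ and $\omega_R = R(-n)$ carefully enough to ensure that $U^\vee$ appears in the correct internal degree and that the augmentation sits at the correct homological position. The genuine mathematical content, namely the vanishing of $\Ext^i_R(U, \omega_R)$ for $i < n$ when $U$ is artinian, is already packaged into \cref{p:BHresults}(4), so the remainder is bookkeeping.
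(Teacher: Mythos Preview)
Your argument is correct and is precisely the standard deduction the paper has in mind when it says the result is ``a direct consequence of the isomorphisms in \cref{p:BHresults}''; the paper itself does not spell out a proof beyond that remark and the citation to Boij. One small point: \cref{p:BHresults}(4) as stated only records the isomorphism $U^\vee\cong\Ext^n_R(U,\omega_R)$, not the vanishing of $\Ext^i_R(U,\omega_R)$ for $i<n$, so you should justify that separately (e.g.\ since $U$ is artinian, $\Ann_R(U)$ is $\m$-primary and hence has grade $n$ in the Cohen--Macaulay ring $R$).
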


We will need an equivariant version of the previous result which we state in \Cref{equivariantBoij}.  This differs from \Cref{Boij} by a twist given by the 1-dimensional representation $ \bw^n_{\kk} R_1$, denoted $\Sp_{(1^n)}$ in~ \Cref{ex:altrep}. 

\begin{prop}
\label{equivariantBoij}
Let $R={\kk}[x_1,\ldots, x_n]$ and assume ${\kk}$ is a field of characteristic zero. Suppose that $U$ is an object of $\MG R$ which is an artinian $R$-module. Then there is an isomorphism in $\MG R$
\[ \Tor_i^R(\kk,U)_j \cong
 \bw^n_{\kk} R_1  \otimes_{\kk}\left(\Tor_{n-i}^R( \kk, U^\vee)_{n-j}\right)^\vee\,.\]
\end{prop}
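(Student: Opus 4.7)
The plan is to run the proof of \Cref{Boij} inside the equivariant category $\MG R$, tracking the $\sym_n$-action at every step. The only new ingredient compared to the non-equivariant version is a sign twist in the equivariant structure of the canonical module.

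The key input is the $\sym_n$-equivariant identification
\[
\omega_R \;\cong\; \bw^n_\kk R_1 \otimes_\kk R(-n),
\]
where $\bw^n_\kk R_1$ is viewed as a pure $\sym_n$-representation, namely the sign representation $\Sp_{(1^n)}$ from \Cref{ex:altrep}. The underlying graded $R$-module on the right agrees with $\omega_R = R(-n)$, but the $\sym_n$-action carries a sign twist. This twist reflects local duality: if one computes $\omega_R$ from the Koszul complex $K(x_1,\ldots,x_n)$, whose top term is $K_n = \bw^n_\kk R_1 \otimes_\kk R$, then the sign representation propagates to $\omega_R$ through the standard identification.

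With this identification in hand, I would first fix an $\sym_n$-equivariant minimal graded free resolution $F$ of $U$ in $\MG R$, whose existence is recalled in \Cref{s:rep}. Since $\bw^n_\kk R_1$ is one-dimensional over $\kk$, tensoring with it is exact and commutes with $\Hom_R(-,R)$, yielding the equivariant isomorphism of complexes
\[
\Sigma^n \Hom_R(F,\omega_R) \;\cong\; \bw^n_\kk R_1 \otimes_\kk \Sigma^n F^*(-n).
\]
By \Cref{Boij}, the left-hand side is a minimal graded free resolution of $U^\vee$; the above isomorphism promotes this to a minimal resolution in $\MG R$. Applying $-\otimes_R \kk$ and using minimality to conclude that all induced differentials vanish, a routine tracking of internal and homological degrees (using $F_{n-i} \cong \Tor_{n-i}^R(\kk,U) \otimes_\kk R$ in $\MG R$) produces
\[
\Tor_{n-i}^R(\kk,U^\vee)_{n-j} \;\cong\; \bw^n_\kk R_1 \otimes_\kk \Hom_\kk\bigl(\Tor_i^R(\kk,U)_j,\kk\bigr)
\]
as $\sym_n$-representations.

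To conclude, I would apply $\Hom_\kk(-,\kk)$ to both sides of the displayed formula, then tensor with $\bw^n_\kk R_1$, using that the sign representation is self-dual (so $\bw^n_\kk R_1 \otimes_\kk \bw^n_\kk R_1$ is trivial). This rearranges the formula into the stated form. The main obstacle is the careful justification of the equivariant identification $\omega_R \cong \bw^n_\kk R_1 \otimes_\kk R(-n)$, since this sign twist is precisely what distinguishes the equivariant statement from \Cref{Boij} and is invisible at the level of the underlying graded $R$-module.
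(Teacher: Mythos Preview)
Your approach is correct but takes a different route from the paper. You resolve $U$ by an equivariant minimal free resolution $F$, invoke the equivariant identification $\omega_R \cong \bw^n_\kk R_1 \otimes_\kk R(-n)$, and then dualize $F$ against $\omega_R$ exactly as in the non-equivariant \Cref{Boij}, reading off the sign twist from the canonical module. The paper instead resolves $\kk$ by the Koszul complex $K$, observes that $\Sigma^n K^*$ resolves $\Ext^n_R(\kk,R) \cong (\bw^n_\kk R_1)^\vee$ (so the sign twist is computed directly from the top of $K$ rather than assumed as a property of $\omega_R$), and then establishes a chain of natural isomorphisms $(K_i^* \otimes_R U^\vee)^\vee \cong K_i \otimes_R U$ via hom-tensor adjunction and equivariant Matlis duality. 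Your argument is closer in spirit to \Cref{Boij} and more conceptual; its cost is that the equivariant identification of $\omega_R$ and the equivariance of the local-duality augmentation $\Sigma^n\Hom_R(F,\omega_R)\to U^\vee$ must be justified separately, which you correctly flag as the main obstacle. The paper's argument is more self-contained, since it manufactures the sign twist from the explicit Koszul complex and relies only on naturality of standard isomorphisms, at the price of a longer chain of manipulations.
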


\begin{proof}
As noted above, a projective resolution of $\kk$ in $\MG R$ is given by the Koszul complex $K$.  Since $R$ is Cohen-Macaulay of dimension $n$, we know that ${K}^*$ is exact, except in  homological degree $-n$, see  \cite[Theorem 1.2.8]{BH}. Since ${K_i}^*=\Hom_R(K_i,R)$ is a free $R$-module, we  conclude that the complex  $\Sigma^{n}{K}^*$ is a projective resolution of $\Ext^n_R(\kk,R)$ in $\MG R$.  

Moreover, there is an isomorphism $\Ext^n_R(\kk,R)\cong (\bw^n_{\kk} (R_1))^\vee$ in $\MG R$, since the former module fits into the equivariant exact sequence
\[
\Hom_R(K_{n-1},R)\xrightarrow{\eta}\Hom_R(K_{n},R)\to \Ext^n_R(\kk,R)\to 0\,,
\]
where, with respect to appropriate bases, $\eta=[ x_1 \,\, -x_2 \, \,\, x_3 \,\,  -x_4 \,\, \ldots \,\, (-1)^{n+1} x_n]$, and hence
\[
\Ext^n_R(\kk,R)\cong {K_{n}}^*/\fm {K_n}^*\cong \left(\bw^n_{\kk} (R_1)\right)^\vee.
\]
Consider now $({K_i}^*  \otimes_R U^\vee)^\vee$. We have isomorphisms in $\MG R$
\begin{eqnarray}
({K_i}^* \otimes_R U^\vee)^\vee & \cong&\Hom_R({K_i}^* \otimes_R \Hom_R(U,S), S)  \label{E0} \\
&\cong& \Hom_R({K_i}^*, \Hom_R(\Hom_R(U,S), S)) \label{E1}\\
&\cong& \Hom_R({K_i}^*, U) \label{E2}\\
 & \cong& {{K_i}^*}^*\otimes _R U \label{E3}\\
 &\cong & K_i\otimes_R U  \label{E4}
\end{eqnarray}
that are justified as follows: \eqref{E0}  follows from \eqref{e:iso}, \eqref{E1} is by hom-tensor adjunction, \eqref{E2} is by \eqref{e:iso} and equivariant Matlis duality \cite[Lemma 3.8]{Witt}, \eqref{E3} is by \Cref{lem:freeisos} (1) and \eqref{E4} is by \Cref{lem:freeisos} (2). 
All these isomorphisms are natural, so they induce an equivariant isomorphism of complexes $({K}^*  \otimes_R U^\vee)^\vee\cong K \otimes_R U $. In view of \eqref{e:switch}, we also have  $(\Sigma^n{K}^*  \otimes_R U^\vee)^\vee\cong \Sigma^{-n}K \otimes_R U$. 
Since the functor $(-)^\vee$ is exact, we obtain 
\begin{align*}
\left(\Tor_{n-i}^R((\bw^n_{\kk} R_1)^\vee, U^\vee)\right)^\vee\cong H_{i-n}(( \Sigma^{n}{K}^* \otimes_R U^\vee )^\vee)\cong H_{i-n}(\Sigma^{-n}K \otimes_R U)=\Tor_i^R(\kk,U). 
\end{align*}
Since $(\bw^n_{\kk} (R_1))^\vee\otimes_{\kk} -$  is an exact functor, we further have 
\[
\Tor_{n-i}^R( (\bw^n_{\kk} R_1)^\vee, U^\vee)=\Tor_{n-i}^R( (\bw^n_{\kk} R_1)^\vee\otimes_{\kk} \kk, U^\vee)=(\bw^n_{\kk} R_1)^\vee\otimes_{\kk} \Tor_{n-i}^R(  \kk, U^\vee)
\]
whence 
\begin{eqnarray*}
\left(\Tor_{n-i}^R( (\bw^n_{\kk} R_1)^\vee, U^\vee)\right)^\vee 
&\cong&
 {\left(\bw^n_{\kk} R_1\right)^\vee}^\vee \otimes_{\kk} \left(\Tor_{n-i}^R(  \kk, U^\vee)\right)^\vee \\
 &\cong& \bw^n_{\kk} R_1 \otimes_{\kk} \left(\Tor_{n-i}^R(  \kk, U^\vee)\right)^\vee.
\end{eqnarray*}

Tracing through the isomorphisms above, where $\bw^n_{\kk} R_1$ is concentrated in internal degree $n$, we see that the degree $j$ component of $\left(\Tor_{n-i}^R( (\bw^n_{\kk} R_1)^\vee, U^\vee)\right)^\vee $ is isomorphic to  
\[
\left(\bw^n_{\kk} R_1 \right)_n \otimes_{\kk}\left( \left(\Tor_{n-i}^R(  \kk, U^\vee)\right)^\vee\right)_{j-n}=\bw^n_{\kk} R_1\otimes_{\kk} \left(\Tor_{n-i}^R(  \kk, U^\vee)_{n-j}\right)^\vee.  \qedhere
\] 
\end{proof}

Another way to recover the twist by $\bw^n_{\kk} R_1$ in \cref{equivariantBoij} is by using \cref{Boij} and the canonical description of $\omega_R$ as $\bw^n_{\kk} R_1\otimes_\kk R$ .

\subsection{Macaulay inverse systems}
\begin{defn}
Let $I$ be a homogeneous ideal of $R$. The {\it Macaulay inverse system} of $I$ is defined as the graded $R$-submodule of $S$ given by 
\begin{equation}
\label{eq:Iperp}
I^\perp:=\Ann_S(I)=\{ g\in S : f\circ g=0 \text{ for all } f\in I\}. 
\end{equation}
\end{defn}

\begin{rem} 
Since $\Ann_S(I)\cong \Hom_R(R/I, S)$, we see from \cref{p:BHresults} that 
\[
I^\perp\cong (R/I)^\vee
\]
and $I^\perp$ is finitely generated if and only if $R/I$ is artinian. 

If $W$ is a graded $R$-submodule of $S$, then $W^\vee\cong \Hom_R(W,S)\cong R/\Ann_R(W)$, where  
$
\Ann_R(W)=\{f\in R: f\circ g=0 \mbox{ for all } g\in W\}
$ is a homogeneous  ideal of $R$. 
It is thus a consequence of (graded) Matlis duality that 
\begin{equation}
\label{e:duality}
 \Ann_R(I^\perp)=I \qquad \text{and}\qquad (\Ann_R(W))^\perp=W\,.
 \end{equation}
\end{rem}

Let $U$ be a finitely generated graded $R$-module. The {\it Hilbert function} of $U$ is the  function 
$$\HF_U\colon \mathbb Z\to \mathbb N\,,\qquad \text{with}\qquad \HF_U(i)=\dim_\kk U_i\quad\text {for all $i\in \mathbb Z$}\,.
$$ 
We also use the {\it Hilbert series} of $U$, defined as 
$$
\H_U(z)=\sum_{i\in \mathbb Z}(\HF_U(i))z^i\,.
$$
The {\it generator type} of $U$ is the polynomial $G_U=\HF_{U/R_{\geqslant 1}U}$.

\begin{defn}
\label{def:t}
We define the {\it initial degree} of $U$ to be the integer
\[
t(U) :=\min\{i\in \mathbb Z\colon \HF_U(i)\ne 0\}\,.
\]
\end{defn}

Assume $I$ is a homogeneous ideal  of $R$ and set $A=R/I$. 
We refer to $A$ as an {\it algebra quotient} of $R$.  When using this terminology, it is implied that $I$ is homogeneous.

\subsection{Compressed artinian algebras}
Assume further, for the remainder of the section,  that $A$ is  an artinian $R$-algebra. 
We let $\m$ denote the ideal $R_{\geqslant 1}=(x_1, \ldots, x_n)$ and $\m_A$ its image in $A$.  The {\it socle} of $A$ is the graded $A$-module $$\Soc(A) :=\Ann_A(\m_A)\cong \Hom_A(\kk, A)\,.$$
The Hilbert function of the socle is called the {\it socle type of $A$.} We refer to the Hilbert series $\H_{\Soc(A)}(z)$ of $\Soc(A)$ as the {\it socle polynomial} of $A$. Since $\Soc(A)$ is a finite dimensional $\kk$-vector space, this is indeed a polynomial. 
The Hilbert function and the socle type of $A$ can be read off the Macaulay inverse system of $I$.
Namely, we have
\begin{gather}
\HF_{I^\perp}(-i)=\dim_\kk (I^\perp)_{-i}=\dim_\kk A_i =\HF_A(i)\\
\label{eq:GIperp}
G_{I^\perp}(-i)=\dim_\kk\left(\frac{I^\perp}{\m\circ I^\perp}\right)_{-i}=\dim_\kk \Soc(A)_i=\HF_{\Soc(A)}(i)
\end{gather}
where the first equation follows from the isomorphism $I^\perp\cong A^\vee = (R/I)^\vee$ and the second is due to the isomorphisms 
$$(I^\perp/\m\circ  I^\perp)^\vee \cong \Soc((I^\perp)^\vee)\cong \Soc(A)\,,$$
which follow from \cite[Proposition 3.6.17]{BH} and (the graded version of) the proof of \cite[Proposition 3.2.12(d)]{BH}. 

\begin{rem}
\label{r:cyclic}
$A$ is Gorenstein if and only if $I^\perp$ is a cyclic $R$-module. Indeed, since $A$ is artinian, it is Gorenstein if and only if its socle is $1$-dimensional, and the statement follows from \eqref{eq:GIperp}. 
\end{rem}

\begin{defn}
\label{def:socle}
The {\it top socle degree} of $A$ is defined to be the integer
\[
s(A) :=\max\{i\ge 0\colon\Soc(A)_i\ne 0\}\,.
\]
\end{defn}
Set $s=s(A)$ and let $\sum_{i=0}^se_iz^i$ be the socle polynomial of $A$. In view of \eqref{eq:GIperp}, the isomorphism $I^\perp\cong A^\vee$ gives an inequality: 
\begin{equation}
\label{e:comp}
H_A(i)\le \min\big\{\dim_\kk R_i, \sum_{j=i}^se_j\dim_\kk R_{j-i} \big\}\qquad\text{for all $i$ with $0\le i\le s$.}
\end{equation}
Compressed algebras were defined and first studied in \cite{Iar} as algebras whose Hilbert series is largest that is allowed by the socle polynomial. 
\begin{defn}
\label{def:compressed}
Set $s=s(A)$ and let $\sum_{i=0}^se_iz^i$ be the socle polynomial of $A$. We say that the algebra $A$ is {\it compressed}  if equality holds in \eqref{e:comp}. 
\end{defn}
There need not always exist an algebra having a given socle polynomial that is compressed in the sense of \Cref{def:compressed}. A sufficient (but not necessary) condition for the existence of such algebras is phrased in terms of permissible socle polynomials.

\begin{defn}
\label{d:permissible}Following \cite{Iar}, we say that a polynomial $p(z)=\sum_{i=t-1}^s e_iz^i$  is {\it permissible}  for a socle polynomial of a compressed algebra $A=R/I$, where $t(I)=t$ and $s(A)=s$ if the following conditions are satisfied: 
\begin{enumerate}[\quad(i)]
\item $e_i\ge 0$ for all $i$ and $e_s>0$;
\item $\sum_{i\geqslant t} e_i\dim_\kk R_{i-t}<\dim_\kk R_t$;
\item $e_{t-1}=\max(0, R_{t-1}-\sum_{i\geqslant t}e_i\dim_\kk R_{i-(t-1)})$.
\end{enumerate}
\end{defn}

If $p(z)$ is permissible for a socle polynomial,  then all artinian algebra quotients $A$ of $R$ that have $p(z)$ as their socle polynomial can be parametrized by a certain space, and a general $A$ (with respect to this parameter space) is known to be compressed. Consult  the literature, e.g.~\cite{Iar} or ~\cite{FL}, for more details.

When $A$ is compressed, one can use the socle polynomial of $A$ and duality  to derive information about the betti numbers of $A$ over $R$. For example, when the socle is concentrated in a single degree (i.e.~the algebra is {\it level}), the free resolution of a compressed algebra is concentrated in two strands, except for the beginning and the end, see \cite{Boij}. Also, if $A$ is compressed with permissible socle polynomial $p(z)=\sum_{i=t-1}^s e_iz^i$ as above, and 
$$
\sum_{i}e_i\dim_\kk R_{i-t-1}=\dim_\kk R_{t-1}\,,
$$
then the minimal free resolution of $A$ over $R$ is {\it almost linear}, meaning that all matrices in a minimal free resolution of $A$  have entries in degree at most 1, except for the first and the last, see \cite[Proposition 4.1A]{Iar}. 

Finally, observe that the invariants $s(A)$ and $t(I)$ impose bounds on the betti numbers of $A$ as follows. 
\begin{lem}
\label{st}
If $A$ is an artinian algebra quotient of $R$, then for all $i>0$: 
\begin{enumerate}[\quad\rm(1)]
\item $\beta_{i,j}^R(A,\kk)=0$ for all $j$ with $j<i-1+t(I)$, and $\beta_{1,t(I)}^R(A,\kk)\ne 0$; 
\item $\beta_{i,j}^R(A,\kk)=0$ for all $i,j$ with $j>i+s(A)$, and  $\beta_{n,n+s(A)}^R(A,\kk)\ne 0$. 
\end{enumerate}
In particular, $t(I)\le s(A)+1$, where $s(A)$ equals the regularity of $A$. 
\end{lem}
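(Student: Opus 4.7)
The proof decomposes naturally into the two parts of the statement, with the final inequality $t(I) \leq s(A)+1$ emerging as an immediate consequence. The common engine is the elementary fact that for \emph{any} finitely generated graded $R$-module $M$, one has $\beta_{k,j}^R(M)=0$ whenever $j<k+t(M)$, together with $\beta_{0,t(M)}^R(M)\ne 0$. The non-vanishing is Nakayama applied to $M/\m M$ in combination with the definition of $t(M)$. The vanishing is proved by induction on $k$: in a minimal free resolution $G_\bullet \to M \to 0$ the differential $\phi_k$ has entries in $\m$, and every basis element $e_{k,i}$ satisfies $\phi_k(e_{k,i})\ne 0$ by minimality, so the basis elements of $G_{k-1}$ appearing in $\phi_k(e_{k,i})$ are of strictly smaller degree; hence the minimum degree shift of $G_k$ exceeds that of $G_{k-1}$ by at least one.

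For Part (1), the long exact sequence of $\Tor$ applied to $0 \to I \to R \to A \to 0$, combined with the vanishing of $\Tor_i^R(R,\kk)$ for $i \geq 1$ and the fact that $I \otimes_R \kk \to R \otimes_R \kk$ is the zero map (since $I \subseteq \m$), produces $\beta_{i,j}^R(A)=\beta_{i-1,j}^R(I)$ for every $i \geq 1$. The claim of Part (1) is then exactly the general observation above applied to $M=I$.

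For Part (2), the strategy is to convert the statement about the tail of the resolution of $A$ into the statement about the head of the resolution of $A^\vee$ via Matlis duality. By \Cref{Boij}, we have $\beta_{i,j}^R(A)=\beta_{n-i,\,n-j}^R(A^\vee)$ for all $i,j$. Formula \eqref{eq:GIperp} identifies the degrees where $A^\vee/\m A^\vee$ is nonzero with the socle degrees of $A$, with a sign flip; in particular $t(A^\vee)=-s(A)$ and $\dim_\kk (A^\vee/\m A^\vee)_{-s(A)} = \dim_\kk \Soc(A)_{s(A)} > 0$. Applying the general observation to $M=A^\vee$ gives $\beta_{k,\ell}^R(A^\vee)=0$ for $\ell<k-s(A)$ together with $\beta_{0,-s(A)}^R(A^\vee)\ne 0$; the substitutions $k=n-i$, $\ell=n-j$ now translate these directly into the claimed vanishing $\beta_{i,j}^R(A)=0$ for $j>i+s(A)$ and the non-vanishing $\beta_{n,n+s(A)}^R(A)\ne 0$.

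The final inequality follows by setting $i=1$: Part (1) delivers $\beta_{1,t(I)}^R(A)\ne 0$, and Part (2) forces the internal degree of any nonzero $\beta_{1,j}^R(A)$ to satisfy $j \le 1+s(A)$, so $t(I) \le s(A)+1$. No serious obstacle is anticipated; the only care needed is in tracking the sign conventions when passing from generator degrees of $A$ to generator degrees of $A^\vee$, and in noting that the ``syzygies live in strictly higher degree'' principle is applied once to the ideal $I$ and once to the non-ideal module $A^\vee$, which is why it was formulated for an arbitrary module above.
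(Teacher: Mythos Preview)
Your proof is correct and follows essentially the same approach as the paper: both establish the general observation that $\beta_{k,j}^R(M)=0$ for $j<k+t(M)$ and $\beta_{0,t(M)}^R(M)\ne 0$, apply it with $M=I$ for Part~(1) via the shift $\Tor_i^R(A,\kk)\cong\Tor_{i-1}^R(I,\kk)$, and with $M=A^\vee\cong I^\perp$ for Part~(2) via the duality of \Cref{Boij}. Your sign tracking $t(A^\vee)=-s(A)$ is in fact correct where the paper's own proof contains a typo (writing $t(I^\perp)=s(A)$ rather than $-s(A)$), though the subsequent computation in the paper is carried out correctly regardless.
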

\begin{proof}
If $M$ is a finitely generated graded $R$-module,  observe that  $\Tor_i^R(M,\kk)_j=0$ for all $j<i+t(M)$, and $\Tor_0^R(M,\kk)_j\ne 0$ for $j=t(M)$. This can be seen by inspecting the degree shifts in a graded minimal free resolution of $M$ over $R$. Part (1) follows by using this observation with $M=I$ and the isomorphism $\Tor_i^R(A,\kk)_j\cong \Tor_{i-1}^R(I, \kk)_j$. 

Using \cref{Boij} and the isomorphism $A^\vee \cong I^\perp$, we have 
$$
\Tor_i^R(A,\kk)_j\cong \Tor_{n-i}^R(I^\perp,\kk)_{n-j}\,.
$$
Since $t(I^\perp)=s(A)$ by \eqref{eq:GIperp}, (2) follows from the same observation, with $M=I^\perp$. 
\end{proof}

\section{Narrow and extremely narrow algebras}
\label{s:narrow}

We use the notation introduced in Section \ref{s:duality}. While we do not make a blanket assumption that $A=R/I$ is artinian, this will be implied whenever $s(A)<\infty$. We discuss  a class of compressed $R$-algebras for which we can use duality to determine their betti numbers over $R$.  In general, \Cref{st} gives $t(I)\le s(A)+1$. We study algebras that are extremal with respect to this inequality.

\begin{defn}
\label{def:narrow}
Let $A=R/I$ be an artinian algebra quotient of $R$. We say that $A$ is {\it narrow} if $t(I)\ge s(A)$, that is, the top socle degree of $A$ is at most the initial degree of $I$.
\end{defn}

While we use different terminology, the idea of studying  ideals that are extremal  with respect to the inequality $t(I)\le s(A)+1$ is not new. When $A$ is artinian, $t(I)=s(A)+1$ means  that $A$ is  {\it extremal Cohen-Macaulay}, and  $t(I)=s(A)$ means that $A$ is {\it nearly extremal Cohen-Macaulay}, in the terminology of \cite{KSK}. In particular, some of our results below share some overlap with those in \cite{KSK}.

We start with equivalent characterizations of narrowness in terms of the Macaulay inverse system and the betti numbers. \cref{l:narrow-equiv1} indicates, in particular,  how one can construct examples, using the inverse system. In what follows, $\langle F_1, \dots, F_a\rangle$ denotes the $\kk$-vector space spanned by $F_1, \dots, F_a$ and $(F_1, \dots, F_a)$ denotes the $R$-submodule of $S$ generated by $F_1, \dots, F_a$. 

\begin{lem}
\label{l:narrow-equiv1}
Let $d\ge 1$, $a\ge 1$ and $F_1, \dots, F_a\in S_{-d}$. Let $A=R/I$ be an algebra quotient of $R$.  The following are equivalent: 
\begin{enumerate}[\quad\rm(1)]
\item $A$ is narrow with $s(A)=d$ and $(I^\perp)_{-d}=\langle F_1, \dots, F_a\rangle$;
\item $I^\perp=(F_1 \dots, F_a)+S_{\geqslant-(d-1)}$; 
\item $I=\Ann_R\left((F_1 \dots, F_a)+S_{\geqslant-(d-1)}\right)$.
\end{enumerate}
 If these conditions hold, then we also have 
\[
I=\Ann_R(F_1, \dots, F_a)\cap \m^d=(V)+\m^{d+1}\,
\]
where $V=\Ann_R(F_1, \dots, F_a)\cap R_d$.
\end{lem}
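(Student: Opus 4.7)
The plan is to use Macaulay duality, specifically the bijection in \eqref{e:duality}, to translate each of (1), (2), (3) into an equivalent statement about the inverse system $I^\perp$, and then to derive the final presentation of $I$ by distributing $\Ann_R$ over the sum in (3).

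For (1) $\Leftrightarrow$ (2), I will unpack (1) on the inverse-system side. The isomorphism $I^\perp\cong A^\vee$ gives $\dim_\kk(I^\perp)_{-i}=\HF_A(i)$, so $s(A)=d$ says $I^\perp\subseteq S_{\geqslant -d}$ with $(I^\perp)_{-d}\ne 0$, and the refinement $(I^\perp)_{-d}=\langle F_1,\ldots,F_a\rangle$ pins down the top component. The narrow condition combined with $s(A)=d$ gives $t(I)\geqslant d$, i.e., $I\subseteq\m^d$; by \eqref{e:duality} this is equivalent to $I^\perp\supseteq(\m^d)^\perp$, and a quick contraction computation shows $(\m^d)^\perp=S_{\geqslant -(d-1)}$. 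Together, these conditions yield $I^\perp=\langle F_1,\ldots,F_a\rangle+S_{\geqslant -(d-1)}$ as graded $\kk$-subspaces. Since the $R$-submodule $(F_1,\ldots,F_a)$ of $S$ generated by the $F_i\in S_{-d}$ has top component $\langle F_1,\ldots,F_a\rangle$ and its lower pieces already contained in $S_{\geqslant -(d-1)}$, this vector-space identity coincides with the $R$-module identity in (2). The equivalence (2) $\Leftrightarrow$ (3) is then immediate from the double-annihilator relations \eqref{e:duality}, applied to $W=(F_1,\ldots,F_a)+S_{\geqslant -(d-1)}$.

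For the furthermore clause, starting from (3) I distribute using $\Ann_R(W_1+W_2)=\Ann_R(W_1)\cap\Ann_R(W_2)$. A direct degree check gives $\Ann_R(S_{\geqslant -(d-1)})=\m^d$: elements of $\m^d$ send $S_{\geqslant -(d-1)}$ into $S_{\geqslant 1}=0$, while any nonzero homogeneous $r\in R_k$ with $k\leqslant d-1$ fails to annihilate a suitable dual basis element $y^{(\alpha)}\in S_{-k}$. This produces $I=\Ann_R(F_1,\ldots,F_a)\cap\m^d$. Since the $F_i$ lie in $S_{-d}$, the same kind of degree count gives $\m^{d+1}\subseteq\Ann_R(F_1,\ldots,F_a)$, hence $\m^{d+1}\subseteq I$; a final component-by-component degree argument then writes $I=(V)+\m^{d+1}$ with $V=I\cap R_d=\Ann_R(F_1,\ldots,F_a)\cap R_d$. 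None of the steps looks like a real obstacle — all are direct applications of Macaulay duality and elementary degree bookkeeping. The only careful point is matching the $\kk$-vector-space description of $I^\perp$ coming out of (1) with the $R$-module description in (2), which works precisely because the $F_i$ sit in the top degree $-d$.
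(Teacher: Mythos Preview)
Your proposal is correct and follows essentially the same route as the paper: both arguments translate (1) into conditions on $I^\perp$ via $I^\perp\cong A^\vee$ (namely $I^\perp\subseteq S_{\geqslant -d}$, $(I^\perp)_{-d}=\langle F_1,\ldots,F_a\rangle$, and $S_{\geqslant -(d-1)}\subseteq I^\perp$), identify this with (2), obtain (2)$\iff$(3) from \eqref{e:duality}, and then derive the final identities by distributing $\Ann_R$ over the sum and using $\Ann_R(S_{\geqslant -(d-1)})=\m^d$ together with $\m^{d+1}\subseteq\Ann_R(F_1,\ldots,F_a)$. Your explicit remark about reconciling the $\kk$-vector-space and $R$-module descriptions of $I^\perp$ is a useful clarification that the paper leaves implicit.
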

\begin{proof}
Note that $(I^\perp)_{-d}=\langle F_1, \dots, F_a\rangle$ is a consequence of both (1) and (2), and thus the equivalence $(1)\iff (2)$ is established through the following sequence of equivalent statements: 
\begin{align*}
t(I)\ge d&\iff \dim_\kk A_{d-1}=\dim_\kk R_{d-1}\\
&\iff (I^\perp)_{-(d-1)}=S_{-(d-1)}\\
&\iff (I^\perp)=(F_1, \dots, F_a)+S_{\geqslant-(d-1)}\,.
\end{align*}
The equivalence $(2)\iff(3)$ is a consequence of duality, cf.~\eqref{e:duality}. 

Further, we  have 
\begin{align*}
\Ann_R\left((F_1 \dots, F_a)+S_{\geqslant-(d-1)}\right)&=\Ann_R(F_1 \dots, F_a)\cap \Ann_R(S_{\geqslant-(d-1)})\\
&=\Ann_R(F_1 \dots, F_a)\cap \m^d\\
&=(V)+\m^{d+1}
\end{align*}
where the last equality is justified by the inclusion $\m^{d+1}\subseteq \Ann_R(F_1, \dots, F_a)$, which is in turn implied by the inclusion $(F_1, \dots, F_a)\subseteq S_{\geqslant -d}$.
\end{proof}

\cref{l:narrow-equiv2} justifies the terminology ``narrow" as applicable to algebras whose nonzero betti numbers are concentrated in only two adjacent degrees with the exception of the $0$th homological degree. 

\begin{lem}
\label{l:narrow-equiv2}
Let $d\ge 1$ and let $A=R/I$ be an algebra quotient of $R$ with $s(A)=d$.  The following are then equivalent: 
\begin{enumerate}[\quad\rm(1)]
\item $A$ is narrow;
\item $\beta_{i,j}(A)=0$ for all $i,j$ with $j\notin \{i+d, i+d-1\}$ and $i>0$.
\end{enumerate}
\end{lem}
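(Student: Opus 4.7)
The plan is to prove this equivalence as a direct consequence of \Cref{st}, which already pinches the nonzero graded betti numbers of $A$ between two explicit lines in the betti table. Writing $t=t(I)$ and $s=s(A)=d$, that lemma gives $\beta_{i,j}(A)=0$ unless $i-1+t\le j\le i+s$, together with $t\le s+1=d+1$.

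For the implication $(1)\Rightarrow(2)$, I would start from the definition of narrow, which says $t\ge d$. Combined with $t\le d+1$, this forces $t\in\{d,d+1\}$. I would then split into these two cases and read off the allowable range of $j-i$ from \Cref{st}. When $t=d$, the bounds give $d-1\le j-i\le d$, i.e.\ $j\in\{i+d-1,i+d\}$. When $t=d+1$, the bounds collapse to $j=i+d$, which is contained in $\{i+d-1,i+d\}$. Either way, condition (2) holds.

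For the reverse implication $(2)\Rightarrow(1)$, the key is to specialize condition (2) to homological degree $i=1$: it says $\beta_{1,j}(A)=0$ for $j\notin\{d,d+1\}$, so in particular $\beta_{1,j}(A)=0$ for all $j<d$. Since $s(A)=d\ge 1$ we have $I\ne 0$, and by \Cref{st}~(1) the integer $t(I)$ equals the least $j$ with $\beta_{1,j}(A)\ne 0$. Hence $t(I)\ge d=s(A)$, which is precisely the condition that $A$ is narrow.

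No step here is a real obstacle; the entire argument is essentially bookkeeping with the vanishing ranges provided by \Cref{st}. The only small subtlety I would be careful about is handling the degenerate boundary case (e.g.\ verifying that one may assume $I\ne 0$ when extracting $t(I)$ from $\beta_{1,\bullet}(A)$), which is immediate since $s(A)=d\ge 1$ forces $A\ne R$ and hence $I\ne 0$.
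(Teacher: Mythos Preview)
Your proof is correct and is precisely the unpacking of what the paper means by ``this follows directly from \Cref{st}.'' The paper gives no further detail beyond that one sentence, so your case analysis for $(1)\Rightarrow(2)$ and your use of $\beta_{1,t(I)}(A)\ne 0$ for $(2)\Rightarrow(1)$ are exactly the intended argument.
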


\begin{proof}
This follows directly from \cref{st}. 
\end{proof}

\begin{notation}
\label{LA}
Let $F_1, \dots, F_a\in (I^\perp)_{-d}$   and define 
$$
L_{F_1, \ldots, F_a}:=\left \{(\ell_1, \ell_2, \dots, \ell_a)\in {R_1}^a\colon \sum_{i=1}^a \ell_i\circ F_i=0 \right \} \,.
$$
When $ F_1, \dots, F_a$ is a basis of $(I^\perp)_{-d}$, the $\kk$-vector space $L_{F_1, \ldots, F_a}$  is independent, up to isomorphism, of the choice of $F_1, \dots, F_a$ and we will also denote it by $L_A$ when appropriate. Note that we have an exact sequence of $\kk$-vector spaces
\begin{equation}
\label{eq:Lses}
0\to L_{F_1, \dots, F_a}\hookrightarrow{R_1}^a\xrightarrow{\psi} R_1\circ (F_1, \dots, F_a)_{-d}\to 0
\end{equation}
with $\psi(\ell_1, \ell_2, \dots, \ell_a)=\sum_{i=1}^a \ell_i\circ F_i$, and, since $\dim_\kk R_1=n$, we have  
\begin{equation}
\label{dimL_A}
\dim_\kk(L_A)=an-\dim_\kk \left( R_1\circ {(I^\perp)}_{-d} \right).
\end{equation}
\end{notation}

We now list several properties of narrow algebras.  \cref{l:narrow} establishes, in particular,  that narrow algebras are instances of compressed algebras, but their socle polynomial may not be permissible. 

\begin{lem}
\label{l:narrow}
Let $d\ge 1$, $a\ge 1$. If $A$ is a narrow algebra with $s(A)=d$ and $\dim_\kk{(I^\perp)}_{-d}=a$, then the following hold: 
\begin{enumerate}[\quad \rm(1)]
 \item The socle polynomial of $A$ is $bz^{d-1}+az^{d}$, where 
\begin{align*}
b= \dim_\kk R_{d-1}&-\dim_{\kk} \left(R_1\circ (I^\perp)_{-d}\right)=\dim_\kk R_{d-1}-an+\dim_\kk L_{A}\quad\text{and}\\
&\dim_\kk R_{d-1}>b\ge \dim_\kk R_{d-1}-an\,.
\end{align*}
 \item $A$ is compressed and has Hilbert function
  $$
 \HF_A(i)=\begin{cases}\dim_\kk R_i &\text{if $i\le d-1$.}\\
 a&\text{if $i=d$}\\
 0& \text{if $i>d$}.
 \end{cases}
 $$
 \item We have $t(I)\in \{d, d+1\}$, and
\[ t(I)=d+1\,\iff\, I=\m^{d+1} \,\iff\, a=\dim_\kk R_d\,.
\]
 \item If $I\ne \m^{d+1}$, then the following are equivalent: 
 \begin{enumerate}[\quad\rm(a)]
 \item The socle polynomial of $A$ is permissible;
 \item $b=\dim_\kk R_{d-1}-an$; 
 \item $L_A=0$. 
  \end{enumerate}
 \item Let $F_1, \dots, F_a\in S_{-d}$ such that  ${(I^\perp)}_{-d}=\langle F_1, \ldots, F_a\rangle$ and define $\varphi\colon \m^d\to \kk^a(-d)$ by 
$$
\varphi(r)=(r\circ F_1, r\circ F_2, \dots, r\circ F_a)\qquad\text{for $r\in \m^d$\,.}
$$
Then $\varphi$ is surjective and $\Ker(\varphi)=I$.  Furthermore, if $F_i$ are invariant with respect to the action of $\sym_n$ on $S$, then $\varphi$ is $\sym_n$-equivariant, when considering $\kk^a(-d)$ to be endowed with the trivial $\sym_n$-structure. 
 \end{enumerate}
\end{lem}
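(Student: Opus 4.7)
The plan is to work through the five parts in order, treating \Cref{l:narrow-equiv1} as the single unifying tool. I would start by fixing a basis $F_1, \ldots, F_a$ of $(I^\perp)_{-d}$ and invoking \Cref{l:narrow-equiv1} to write
\[
I^\perp = (F_1, \ldots, F_a) + S_{\geq -(d-1)}.
\]
Everything else will reduce to computing graded pieces of this module and of its $\m$-coinvariants, together with the duality $\HF_A(i) = \HF_{I^\perp}(-i)$ and the socle formula \eqref{eq:GIperp}.

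For (2), I would read off $\dim_\kk (I^\perp)_{-i}$ directly from the displayed formula: the whole of $S_{-i}$ for $i \leq d-1$, the span $\langle F_1, \ldots, F_a\rangle$ for $i = d$, and zero for $i > d$. Compressedness against the socle polynomial computed in (1) then reduces to verifying equality in \eqref{e:comp}. For (1), \eqref{eq:GIperp} gives $e_i = \dim_\kk(I^\perp/\m\circ I^\perp)_{-i}$: in degree $-d$ the submodule $\m\circ I^\perp$ vanishes, so $e_d = a$, and in degree $-(d-1)$ one has $(\m\circ I^\perp)_{-(d-1)} = R_1\circ (I^\perp)_{-d}$ since $(I^\perp)_{-j}=0$ for $j > d$. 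The sequence \eqref{eq:Lses} then identifies $\dim_\kk(R_1\circ (I^\perp)_{-d}) = an - \dim_\kk L_A$, yielding the formula for $b$. The strict bound $b < \dim_\kk R_{d-1}$ follows because the graded socle of $S$ is concentrated in degree $0$, so $F_1 \neq 0$ forces $R_1\circ F_1 \neq 0$ when $d\ge 1$. Part (3) is then immediate: narrowness gives $t(I) \geq d$ and \Cref{st} gives $t(I) \leq d+1$, while $\m^{d+1}\subseteq I$ (because $A_{>d}=0$) makes the remaining equivalences trivial.

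For (4), I would apply \Cref{d:permissible} with $t = d$: condition (i) is automatic, condition (ii) becomes $a < \dim_\kk R_d$ (equivalent to $I \neq \m^{d+1}$), and condition (iii) becomes $b = \max(0, \dim_\kk R_{d-1} - an)$. Combined with the formula for $b$ from (1), this yields the equivalence of (a), (b), (c), noting that $b \ge 0$ forces (b) to implicitly carry the hypothesis $\dim_\kk R_{d-1} \ge an$. For (5), surjectivity of $\varphi$ follows from non-degeneracy of the pairing $R_d \times S_{-d}\to \kk$ together with linear independence of $F_1,\ldots,F_a$. The inclusion $I\subseteq \ker\varphi$ is clear; for the reverse I would decompose $r \in \ker\varphi\cap \m^d$ as $r = r_d + r_{>d}$, noting $r_{>d}\in \m^{d+1}\subseteq I$, while $r_d$ annihilates the lower degree components of $I^\perp$ for purely degree reasons and kills $\langle F_i\rangle$ by hypothesis, hence $r_d \in \Ann_R(I^\perp) = I$. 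For equivariance, I would combine the identity $\sigma\cdot(r\circ F_i) = (\sigma\cdot r)\circ (\sigma\cdot F_i) = (\sigma\cdot r)\circ F_i$ with \Cref{l:actionS}(2), which ensures $\sigma\cdot(r\circ F_i) = r\circ F_i$ in internal degree $0$, so $\varphi(\sigma\cdot r) = \varphi(r)$ matches the trivial action on the target.

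The only genuinely delicate point I anticipate is keeping the grading conventions consistent: the contraction action shifts degrees, graded duality flips signs, and the socle computation sees both effects simultaneously. I expect this bookkeeping, rather than any deep conceptual obstacle, to be where slippage is most likely; the rest reduces to careful unpacking of the definitions together with \Cref{l:narrow-equiv1}.
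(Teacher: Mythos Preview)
Your proposal is correct and follows essentially the same route as the paper: both start from \Cref{l:narrow-equiv1} to write $I^\perp=(F_1,\ldots,F_a)+S_{\geqslant -(d-1)}$, then read off the Hilbert function, compute the socle via \eqref{eq:GIperp} and \eqref{eq:Lses}, and handle (3) and (4) by unpacking \Cref{st} and \Cref{d:permissible}. The one substantive difference is in (5): the paper shows $\Ker(\varphi)=I$ by verifying $\HF_{\Ker(\varphi)}=\HF_I$ after establishing the inclusion $I\subseteq\Ker(\varphi)$, whereas you argue directly by decomposing $r=r_d+r_{>d}$ and checking that each piece lies in $\Ann_R(I^\perp)$; both are valid and your argument is slightly more hands-on.
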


\begin{proof}
Write ${(I^\perp)}_{-d}=\langle F_1, \ldots, F_a\rangle$ as in (5). Since $\dim_\kk(I^\perp)_{-d}=a$, note that $F_1, \ldots, F_a$ are linearly independent. By \cref{l:narrow-equiv1}, we have  
\[I^\perp=(F_1 \dots, F_a)+S_{\geqslant-(d-1)}\,.
\]
In particular, $I^\perp$ does not have minimal generators in degrees other than $-d$ and $-(d-1)$, and hence $\Soc(A)$ is concentrated in degrees $d$ and $d-1$ by \eqref{eq:GIperp}. 
More precisely, $b :=\dim_\kk \Soc(A)_{d-1}$ is equal to the number of minimal generators of $I^\perp$ in degree $-(d-1)$, and this justifies the first equality in (1). The second equality in (1) follows from \eqref{dimL_A}. 
The inequalities in (1) follow from the preceding equalities, noting that $\dim_\kk L_A\ge 0$ and $R_1\circ (I^\perp)_{-d}\ne 0$, since $a>0$ and $d>0$.  

Since $\Soc(A)_i=0$ for $i>d$ and $\dim_\kk \Soc(A)_d=a\ne 0$, we must have $\Soc(A)_d=A_d$ and $A_i=0$ for $i>d$. Since $t(I)\ge d$, we also have $A_i=R_i$ for $i<d$. These remarks justify the expression for the Hilbert series in (2). To check that $A$ is compressed, we need to verify that equality holds in \eqref{e:comp} for all $i$ with $0\le i\le d$. When $i=d$, the equality is a consequence of the inequality $ a\le \dim_\kk R_d$. When $i\le d-1$, then equality holds in \eqref{e:comp} because $\HF_A(i)=\dim_\kk R_i$. 

 For (3), use \cref{st} to see $t(I)\in \{d,d+1\}$. If  $t(I)=d+1$, then $A_d=R_d$ and, since $A_{d+1}=0$, we must have $I_{d+1}=R_{d+1}$ and thus $I=\m^{d+1}$. If $a=\dim_\kk R_d$, then we must have $R_d=A_d$, and thus $t(I)>d$, hence $t(I)=d+1$. The remaining implications are clear. 

To prove (4), assume $I\ne \m^{d+1}$, so that $t(I)=d$ by (3). Note that condition (iii) in \cref{d:permissible} can be rewritten as
$b=\max\{0, \dim_\kk R_{d-1}-an\}$. In view of (1), this holds if and only if $L_A=0$, if and only if $b=\dim_\kk R_{d-1}-an$. To finish the proof of (4), note that condition (i) in \cref{d:permissible} holds trivially, and condition (ii) requires  $a<\dim_\kk R_d$, and this holds by (3), since $a=\dim_\kk R_d$ implies $t(I)=d+1$. 

 We now prove (5). For each $\bu=(u_1, \dots, u_n)\in \mathbb N^n$, set $x^\bu=x_1^{u_1}x_2^{u_2}\cdots x_n^{u_n}$ and $y^{(\bu)}=y_1^{(u_1)}y_2^{(u_2)}\cdots y_n^{(u_n)}$. Set 
 $$
 \mathcal N_n^d=\{\bu=(u_1, \dots, u_n)\in \mathbb N^n\colon u_1+u_2+\cdots+u_n=d\}\,.
 $$
 and write  
 \[
 F_i=\sum_{\bu\in \mathcal N_n^d } c_{\bu, i}y^{(\bu)}\qquad \text{and}\qquad 
 r=\sum_{\bu\in \mathcal N_n^d} d_{\bu}x^{\bu}\in R_d\,.
 \]
Noting that $x^\bu\circ y^{(\bu)}=1$, we have 
 $$
r\circ F_i=\sum_{\bu} d_{\bu}c_{\bu, i}\in \kk\,.
$$
 It is clear from here that $\varphi$ is surjective because  $F_1, \dots, F_a$ are linearly independent. 
 To see that $I=\Ker(\varphi)$, we first observe that $I\subseteq \Ker(\varphi)$, because $F_i\in I^\perp$ for all $i$ with $1\le i\le a$. To establish equality, we see that $\HF_I=\HF_{\Ker(\varphi)}$. 
 Indeed, we have: 
 \begin{align*}
 \HF_{\Ker(\varphi)}(i)&=\dim_\kk(\m^d)_i-\dim_\kk(\kk^a(-d))_i=\begin{cases}
 \dim_\kk R_i &\text{if $i\ge d+1$}\\
 \dim_\kk R_d-a &\text{if $i=d$}\\
 0 &\text{if $i<d$}
 \end{cases}\\
 &=\HF_R(i)-\HF_A(i)=\HF_I(i)\,
 \end{align*}
 where the third equality is by part (2) above.
 To prove the final statement, assume $F_i$ are $\sym_n$-invariant and $\kk^a(-d)$ has a trivial $\sym_n$-structure. If $r\in \m^d$, we have
 \begin{align*}
 \varphi(\sigma\cdot r)&=\left((\sigma \cdot r)\circ F_1, \cdots, (\sigma\cdot r)\circ F_a\right)=\left((\sigma\cdot r)\circ (\sigma\cdot F_1), \cdots, (\sigma\cdot r)\circ (\sigma\cdot F_a)\right)\\
 &=\left(\sigma\cdot(r\circ F_1), \cdots, \sigma\cdot (r\circ F_a)\right)=\left(r\circ F_1, \cdots, r\circ F_a\right)=\varphi(r)=\sigma\cdot \varphi(r)\,,
 \end{align*}
 where in the second equality we used that $F_i$ are $\sym_n$-invariant, in the third equality  we used fact that $S\in \mod_{\sym_n}(R)$, in the fourth equality we used \cref{l:actionS}~(2) and in the last equality we used the assumption that $\kk^a(-d)$ carries a trivial $\sym_n$-structure.
\end{proof}

We specialize next the class of narrow algebras even further, by introducing a class of algebras for which we will be able to compute explicitly  the betti numbers. 

\begin{defn}
\label{def:extremelynarrow}
Let  $d\ge 1$ and let $A=R/I$ be an algebra quotient of $R$.  We say that $A$ is $d$-{\it extremely narrow} if the following hold:
\begin{enumerate}[(i)]
\item $s(A)= t(I) =d$;
\item there exists a basis $F_1, \dots, F_a$ of  $(I^\perp)_{-d}$ and $x\in R_1$ such that 
$L_{F_1, \ldots, F_a}\subseteq x{R_0}^a$. 
\end{enumerate} 
\end{defn}

The next result is instrumental in establishing identities for the betti numbers of extremely narrow algebras. 

\begin{lem}
\label{en-props}
Assume $A$ is $d$-extremely narrow. Then the following hold:
\begin{enumerate}[\quad\rm(1)]
\item $\Tor_i^R(I^\perp, \kk)_{i-d}=0$ for all $i\ge 2$, 
\item $\Tor_1^R(I^\perp, \kk)\cong L_A$, 
\item $\Tor_{i}^R(I,\kk)_{j}=0$ for all $i\le n-3$ and all $j\ne i+d$, and 
\item If $n\geq 3$, then $I$ is generated in degree $d$. 
\end{enumerate}
\end{lem}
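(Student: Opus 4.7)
The approach is to prove (2) by direct inspection of the first syzygies of $I^\perp$, then (1) by a Koszul complex calculation that exploits the hypothesis $L_A\subseteq x R_0^a$ together with the vanishing $(I^\perp)_{-d-1}=0$ coming from $s(A)=d$, and finally to deduce (3) and (4) from (1) via graded Matlis duality (\Cref{Boij}).

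\textbf{Establishing (2).} I would write $V=\langle F_1,\ldots,F_a\rangle=(I^\perp)_{-d}$ and choose $G_1,\ldots,G_b\in (I^\perp)_{1-d}$ whose images form a basis of $(I^\perp)_{1-d}/R_1\circ V$, so that $(I^\perp)_{1-d}=R_1\circ V\oplus \langle G_1,\ldots,G_b\rangle$; here $b=\dim_\kk\Soc(A)_{d-1}$ by \Cref{l:narrow}~(1). The minimal free resolution of $I^\perp$ begins with $F_0=R(d)^a\oplus R(d-1)^b$. A minimal syzygy in internal degree $1-d$ is a tuple $(\ell_1,\ldots,\ell_a,c_1,\ldots,c_b)\in R_1^a\oplus \kk^b$ with $\sum_i\ell_i\circ F_i+\sum_j c_jG_j=0$, and the direct sum decomposition of $(I^\perp)_{1-d}$ forces both summands to vanish separately, yielding $c_j=0$ and $(\ell_1,\ldots,\ell_a)\in L_A$.

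\textbf{Establishing (1).} I would tensor the Koszul resolution $K_\bullet$ of $\kk$ with $I^\perp$ and restrict to internal degree $i-d$. Since $(I^\perp)_{-d-1}=0$, the complex truncates to give
\[
\Tor_i^R(I^\perp,\kk)_{i-d}=\ker\bigl(\partial\colon \bw^i_\kk R_1\otimes_\kk V\to \bw^{i-1}_\kk R_1\otimes_\kk(I^\perp)_{1-d}\bigr),
\]
where $\partial(e_I\otimes v)=\sum_{k\in I}\pm e_{I\setminus\{k\}}\otimes(x_k\circ v)$. I factor $\partial=(1\otimes\mu)\circ\tilde\partial$, where $\tilde\partial(e_I\otimes v)=\sum_{k\in I}\pm e_{I\setminus\{k\}}\otimes x_k\otimes v$ lands in $\bw^{i-1}_\kk R_1\otimes_\kk R_1\otimes_\kk V$ and $\mu\colon R_1\otimes V\to R_1\circ V$ is multiplication, so that $\ker\partial=\tilde\partial^{-1}\bigl(\bw^{i-1}_\kk R_1\otimes L_A\bigr)$ because $\ker\mu=L_A$. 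After a linear change of variables I may assume $x=x_1$, so $L_A\subseteq x_1\otimes V$. For $\zeta=\sum_I e_I\otimes v_I$ with $\tilde\partial(\zeta)\in \bw^{i-1}_\kk R_1\otimes x_1\otimes V$, projecting onto the $x_k\otimes V$ component for each $k\neq 1$ forces $v_{J\cup\{k\}}=0$ whenever $|J|=i-1$ and $k\notin J$. Since $|I|=i\ge 2$ and the indices in $I$ are distinct, every $I$ contains some $k\neq 1$, so all $v_I$ vanish and $\zeta=0$.

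\textbf{Deducing (3) and (4).} Combining \Cref{Boij} with the standard isomorphism $\Tor_i^R(I,\kk)\cong \Tor_{i+1}^R(A,\kk)$ for $i\ge 0$ yields
$\Tor_i^R(I,\kk)_j\cong \bigl(\Tor_{n-i-1}^R(I^\perp,\kk)_{n-j}\bigr)^\vee$. Narrowness confines nonvanishing to $j\in\{i+d,i+d+1\}$, so (3) reduces to showing $\Tor_i^R(I,\kk)_{i+d+1}=0$ for $i\le n-3$. With $k=n-i-1\ge 2$, the relevant internal degree on the right becomes $n-j=k-d$, so (1) applies. Part (4) is (3) specialized to $i=0$, valid when $n\ge 3$. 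The main obstacle is the Koszul argument for (1): the hypothesis $L_A\subseteq xR_0^a$ must be translated carefully into a constraint on which components of $\tilde\partial(\zeta)$ may survive. Once that is in hand, the remaining parts follow formally from graded duality and the short exact sequence defining $I$.
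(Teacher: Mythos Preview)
Your proof is correct. Parts (2), (3), and (4) are handled essentially as in the paper: you identify the degree $1-d$ syzygies of $I^\perp$ with $L_A$ by the same direct-sum splitting of $(I^\perp)_{1-d}$, and you pass from (1) to (3) and (4) via \Cref{Boij} exactly as the paper does.

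For part (1) the two arguments differ. The paper works with the first syzygy module $N=\Ker(F_0\to I^\perp)$ and the $R$-submodule $L\subseteq N$ generated by $L_A$; the key step is to embed $L$ into the free module $xF_0$ and use degree reasons to conclude $\Tor_1^R(N,\kk)_{2-d}=0$, after which the vanishing propagates along the linear strand to give $\Tor_{i-1}^R(N,\kk)_{i-d}=0$ for all $i\ge 2$. You instead compute $\Tor_i^R(I^\perp,\kk)_{i-d}$ directly from the Koszul complex: the vanishing $(I^\perp)_{-d-1}=0$ identifies this $\Tor$ with a kernel inside $\bw^i R_1\otimes V$, and the inclusion $L_A\subseteq x_1\otimes V$ lets you kill it by an explicit coordinate argument on the Koszul comultiplication. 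Your route treats all $i\ge 2$ uniformly and avoids the implicit linear-strand propagation; the paper's route is more conceptual, trading the elementwise Koszul computation for the single observation that $L$ sits inside a shifted free module. Both approaches exploit the hypothesis $L_A\subseteq xR_0^a$ in essentially the same way, just at different points in the resolution.
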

\begin{proof}
Let $F_1, \ldots, F_a$ be a basis of ${(I^\perp)}_{-d}$ and complete $F_1, \dots, F_a$ to a minimal generating set of $I^\perp$ by adding a finite number, say $b$, of additional homogeneous generators. Note that these $b$ generators are in degree $-(d-1)$. Let $F={R(d)}^a\oplus {R(d-1)}^b$ and let $\varphi\colon F\to I^\perp$ be the map that sends a basis of $F$ to the full generating set consisting of $a+b$ many elements. Set $N=\text{Ker}(\varphi)$ and let  $L$ be the graded $R$-submodule generated by $L_{A}$ in ${R(1)}^a$. Note that $N$ is generated in degrees $\geqslant -d+1$, and $L$ is equal to the submodule of $N$ generated by its generators in degree $-d+1$.  We have an exact sequence
$$
0\to L\hookrightarrow N\to N/L\to 0\,,
$$
which yields the exact sequence
$$
\Tor_2^R(N/L,\kk )_{-d+2}\to \Tor_{1}^R(L,\kk )_{-d+2}\to \Tor_{1}^R(N,\kk )_{-d+2}\to \Tor_{1}^R(N/L,\kk )_{-d+2}\,.
$$
The definitions of $L$ and $N$  imply $N_{-d}=0$ and $(N/L)_{-d+1}=0$, and hence 
$$
\Tor_2^R(N/L,\kk )_{-d+2}=0=\Tor_{1}^R(N/L,\kk )_{-d+2}\,.
$$
In view of the exact sequence above, we conclude
\begin{equation}\label{isom-tors}
\Tor_{1}^R(L,\kk )_{-d+2}\cong \Tor_{1}^R(N,\kk )_{-d+2}\,.
\end{equation}
Now consider the short exact sequence, with $x$ as in \cref{def:extremelynarrow}(ii):
$$
0\to L\hookrightarrow{xF}\to xF/L\to 0\,.
$$
It yields an exact  sequence 
\begin{equation}\label{les}
\Tor_2^R(xF/L,\kk )_{-d+2}\to \Tor_1^R(L,\kk )_{-d+2}\to \Tor_1^R(xF,\kk )_{-d+2}\,.
\end{equation}
Since $xF$ is a free $R$-module, we have $\Tor_1^R(xF,\kk )=0$. 
We also have $(xF)_i=0$ for all $i< -d+1$, hence $\Tor_2^R(xF/L,\kk )_{-d+2}=0$. 

Equations \eqref{les} and \eqref{isom-tors} give then $\Tor_1^R(N,\kk )_{-d+2}=0$. In turn, given our definition of $N$ as a first syzygy of $I^\perp$, this implies 
$$\Tor_i^R(I^\perp,\kk )_{-d+i}\cong \Tor_{i-1}^R(N,\kk )_{-d+i}=0\qquad\text{for all $i\ge 2$. }$$

We also have 
$$
\Tor_1^R(I^\perp,\kk )_{-d+1}\cong  (N\otimes_R\kk)_{-d+1}\cong L\otimes_R\kk\cong L_A. 
$$
This concludes the proof of (1) and (2). 

To see (3), recall that $I^\perp\cong A^\vee$ and apply \cref{Boij} to (1) in order to obtain $\Tor_i^R(I,\kk)_{i+d+1}=0$ for $i\le n-3$. This observation, in addition to \cref{l:narrow-equiv2}, yields the claim, since $A$ is narrow.

Finally, (4) follows from (3) by taking $i=0$. Indeed, $\Tor_0^R(I,\kk)_{j}=0$ for $j\ge d+1$ implies that $I$ has no generators in degrees greater than $d$. 
\end{proof}

Part (3) of the above result shows that an extremely narrow algebra $A=R/I$ is defined by an ideal $I$ such that the differentials in the resolution of $I$ are given by matrices with linear entries except possibly for the differentials in homological degrees $n-3$ and $n-2$.

The following will be our main criterion for checking that an algebra $A$ is $d$-extremely narrow, assuming that $I$ is generated in a single degree. Note that the hypothesis does not require $A$ to be artinian. 

 \begin{prop}
 \label{strategy}
Let $n\geq 3$, $d\ge 1$ and $a\ge 1$.  Let $A=R/I$ be an algebra quotient of $R$ with $t(I)= d$, and assume the following conditions hold. 
 \begin{enumerate}[\quad\rm(1)]
  \item $\dim_\kk A_d\le a$, or equivalently $\dim_\kk I_d\ge \dim_\kk R_d-a$;  
 \item There exist $F_1, \dots, F_a\in( I^\perp)_{-d}$ linearly independent such that $$L_{F_1, \dots, F_a}\subseteq x{R_0}^a\quad\text{ for some}\quad  x\in R_1\,.$$
 \end{enumerate}
 Then $I^\perp=(F_1, \dots, F_a)+S_{\geqslant -d+1}$  and $A$ is $d$-extremely narrow, with socle polynomial 
 $$(\dim_\kk R_{d-1}-an+\dim_\kk L_A)z^{d-1}+az^d\,.$$ 
 \end{prop}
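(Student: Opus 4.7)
The plan is to prove this in three stages: first, confirm that $F_1,\ldots,F_a$ form a basis of $(I^\perp)_{-d}$; second, establish $s(A)=d$ by showing $(I^\perp)_{-d-1}=0$, which I expect to be the main obstacle; and third, assemble the remaining conclusions from \Cref{l:narrow-equiv1} and \Cref{l:narrow}.

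For stage one, I will use Macaulay duality $(I^\perp)_{-d}\cong A_d^\vee$ to translate the linear independence of the $F_i$ into the bound $\dim_\kk A_d\geq a$; combined with hypothesis (1), this forces $\dim_\kk A_d=a$, and hence $(I^\perp)_{-d}=\langle F_1,\ldots,F_a\rangle$.

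The hard part is stage two. I will start from an arbitrary $G\in (I^\perp)_{-d-1}$ and aim to deduce $G=0$. After a $\GL_n$-change of variables, which transports $F_1,\ldots,F_a$ and $x$ while preserving the hypotheses, I may assume $x=x_1$. Since $I^\perp$ is an $R$-submodule of $S$ and the $F_i$ span $(I^\perp)_{-d}$, for each $j$ I can write $x_j\circ G=\sum_{i=1}^a c_{j,i}F_i$ with $c_{j,i}\in\kk$. The commutativity of contraction, $(x_jx_k)\circ G=(x_kx_j)\circ G$, yields $\sum_i (c_{j,i}x_k-c_{k,i}x_j)\circ F_i=0$, so the tuple $(c_{j,i}x_k-c_{k,i}x_j)_{i=1}^a$ lies in $L_{F_1,\ldots,F_a}\subseteq x_1R_0^a$. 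Because $n\geq 3$, I can choose distinct $j,k\in\{2,\ldots,n\}$ with $x_1,x_j,x_k$ linearly independent in $R_1$; the containment then forces $c_{j,i}=c_{k,i}=0$. Ranging over such pairs shows $x_j\circ G=0$ for every $j\geq 2$, so $G$ must be supported only on $y_1^{(d+1)}$: say $G=c\,y_1^{(d+1)}$ for some scalar $c$. If $c\neq 0$, then $x_1\circ G=c\,y_1^{(d)}$ lies in $\langle F_1,\ldots,F_a\rangle$, and I can adjust the basis so that $F_1=y_1^{(d)}$; this change of basis preserves the condition $L\subseteq x_1R_0^a$, since $x_1R_0^a$ is stable under the induced $\GL_a$-action on $R_1^a$. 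But now $x_2\circ F_1=0$ places $(x_2,0,\ldots,0)$ in the new $L$, while this tuple is not in $x_1R_0^a$ because $x_2$ is not a scalar multiple of $x_1$---a contradiction. Hence $c=0$, so $(I^\perp)_{-d-1}=0$ and $A_{d+1}=0$. The containment $I_{d+k}\supseteq R_{k-1}I_{d+1}=R_{d+k}$ then gives $A_{d+k}=0$ for all $k\geq 1$, so $s(A)=d$.

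For stage three, with $s(A)=d=t(I)$ in hand, $A$ is narrow by \Cref{def:narrow} and $d$-extremely narrow by \Cref{def:extremelynarrow} together with hypothesis (2). The decomposition $I^\perp=(F_1,\ldots,F_a)+S_{\geq -d+1}$ will follow from \Cref{l:narrow-equiv1}, and the formula for the socle polynomial will come from \Cref{l:narrow}(1), using $\dim_\kk L_{F_1,\ldots,F_a}=\dim_\kk L_A$ since $F_1,\ldots,F_a$ is a basis of $(I^\perp)_{-d}$.
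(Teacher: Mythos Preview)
Your proof is correct and takes a genuinely different route from the paper's. The paper introduces the auxiliary ideal $J=\Ann_R\bigl((F_1,\ldots,F_a)+S_{\geqslant -d+1}\bigr)$, observes that $A'=R/J$ is narrow by \Cref{l:narrow-equiv1}, then invokes \Cref{en-props}(4) (which in turn rests on duality via \Cref{Boij}) to conclude that $J$ is generated in degree $d$; since $I_d=J_d$ and $I\subseteq J$, this forces $I=J$. By contrast, you bypass the auxiliary ideal and the homological input entirely: you show $(I^\perp)_{-d-1}=0$ directly by exploiting commutativity of contraction to produce elements of $L_{F_1,\ldots,F_a}$, using the hypothesis $L\subseteq xR_0^a$ together with linear independence of the variables (this is where $n\geq 3$ enters) to force $x_j\circ G=0$ for $j\geq 2$, and then ruling out the remaining monomial $y_1^{(d+1)}$ by a short basis-change contradiction. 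Your argument is more elementary and self-contained; the paper's has the virtue of reusing \Cref{en-props}, which is needed later anyway for the Betti number computations. One small remark: your reduction to $x=x_1$ tacitly assumes $x\neq 0$, but the case $x=0$ (i.e.\ $L=0$) is even easier and handled by the same commutativity step without the final contradiction.
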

 \begin{proof}
 Assume (1) and (2) hold. Let $F_1, \dots, F_a\in(I^\perp)_{-d}$ as in (2). We have 
 $$
a= \dim_\kk\langle F_1, \dots, F_a\rangle \le \dim_\kk(I^\perp)_{-d}=\dim_\kk A_d\le a. 
$$
In view of condition (1), we see that the inequality above must be an equality, and hence
 $$
 ( I^\perp)_{-d}=\langle F_1, \dots, F_a\rangle\,,
$$
 implying also $I_d=\Ann_R(F_1, \dots, F_a)_d$ by \eqref{e:duality}. 

Note that we must have $a<\dim_\kk R_d$, since $t(I)=d$. To show $A$ is $d$-extremely narrow we need to show $s(A)\leq d$ (and hence $s(A)=d$).  Set 
 $$J=\Ann_R((F_1, \dots, F_a)+S_{\geqslant -d+1})\quad\text{and}\quad A'=R/J\,.
 $$
 By  \cref{l:narrow-equiv1}, the algebra $A'$ is then narrow  with $s(A')=d$ and $\dim_\kk\Soc(A')_d=a$,  and in particular $t(J)\ge  d$. Since $a<\dim_\kk R_d$, we must have $t(J)=d$ by \cref {l:narrow}(3). Using the hypothesis (2), it follows that $J$ is a $d$-extremely narrow algebra, and in particular $J$ is generated in degree $d$, by \cref{en-props}(4).

 Since $t(I)=d$, we have $S_{\geqslant -d+1}\subseteq I^\perp$  and hence $(F_1, \dots, F_a)+S_{\geqslant -d+1}\subseteq I^\perp$. We have then 
 $$
 I=\Ann_R(I^\perp)\subseteq\Ann_R((F_1,\dots, F_a)+S_{\geqslant -d+1})=J\,.
 $$
 Since $I_d=\Ann_R(F_1, \dots, F_a)_d=J_d$, and since $J$ is generated in degree $d$, whereas $I$ is generated in degrees at least $d$, we have $J\subseteq I$. We conclude that $I=J$ and hence $A=A'$; moreover, $A$ is $d$-extremely narrow. 
 
Finally, the formula for the socle polynomial follows from \cref{l:narrow}(1). 
 \end{proof}
 
\begin{thm}
\label{thm:ses}
Let $d\ge 1$ and let $A=R/I$ be a $d$-extremely narrow algebra with socle polynomial $bz^{d-1}+az^d$. 
By \Cref{l:narrow}~(5) there is a short exact sequence 
\begin{equation}
\label{exact-seq}
0\to I\hookrightarrow\m^d\xrightarrow{\varphi} \kk^a(-d)\to 0\,.
\end{equation}
This sequence induces a long sequence in homology that splits into exact sequences: 
\begin{gather*}
0\to \Tor_i^R(I,\kk )_{i+d}\to \Tor_i^R(\m^d, \kk)_{i+d}\to \Tor_i^R(\kk^a,\kk )_{i}\to 0\qquad \text{for  $i\le n-2$}\\
0\to \Tor_{n-1}^R(I,\kk )_{n-1+d}\to  \Tor_{n-1}^R(\m^d, k)_{n-1+d}\to \Tor_{n-1}^R(\kk^a,\kk )_{n-1}\to \Tor_{n-2}^R(I,\kk )_{n-1+d}\to 0\\
\Tor_n^R(\kk^a,\kk )_{n}\cong\Tor_{n-1}^R(I, k)_{n+d}
\end{gather*}
where all $\Tor_i^R(I,\kk )_j$ that do not appear in one of the sequences above are equal to zero. 
Further, there are isomorphisms 
\begin{gather*}
 \Tor_{n-2}^R(I,\kk )_{n-1+d}\cong \Hom_\kk(L_A, \kk), \qquad 
\Tor_{n-1}^R(I,\kk )_{n-1+d}\cong \Hom_\kk((I^\perp)_{-d+1}, \kk)\cong \kk^b,\\
\Tor_{n-1}^R(I,\kk )_{n+d}\cong \Hom_\kk((I^\perp)_{-d}, \kk)\cong \kk^a. 
\end{gather*}
\end{thm}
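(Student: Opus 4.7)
The plan is to apply the functor $-\otimes_R \kk$ to the short exact sequence \eqref{exact-seq} and extract the desired information from the resulting long exact sequence of $\Tor$-modules by tracking internal degrees. Three structural inputs drive the argument: (a) the resolution of $\m^d$ from \Cref{ex:resmd} is linear, so $\Tor_i^R(\m^d,\kk)$ is concentrated in internal degree $i+d$; (b) the Koszul complex shows $\Tor_i^R(\kk^a(-d),\kk)$ is concentrated in internal degree $i+d$; and (c) the narrowness hypothesis restricts $\Tor_i^R(I,\kk)_j$ to internal degrees $j\in\{i+d, i+d+1\}$ by \Cref{l:narrow-equiv2}, with only the lower row $j=i+d$ surviving for $i\le n-3$ by \Cref{en-props}(3).

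To obtain the three displayed exact sequences, one fixes an internal degree and checks which boundary terms die. For $i\le n-2$ at internal degree $i+d$, the left term $\Tor_{i+1}^R(\kk^a(-d),\kk)_{i+d}$ is zero by the Koszul calculation (it would require the Koszul Tor in an internal degree off by two), and the right term $\Tor_{i-1}^R(I,\kk)_{i+d}$ is zero by \Cref{en-props}(3) applied to homological degree $i-1\le n-3$; this yields the first short exact sequence. At $i=n-1$ in internal degree $n-1+d$, the term $\Tor_n^R(\kk^a(-d),\kk)_{n-1+d}$ still vanishes by Koszul, and $\Tor_{n-2}^R(\m^d,\kk)_{n-1+d}=0$ by the linearity in (a); this produces the four-term sequence. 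At $i=n-1$ in internal degree $n+d$, all of $\Tor_{n-1}^R(\m^d,\kk)_{n+d}$, $\Tor_{n-1}^R(\kk^a(-d),\kk)_{n+d}$, and $\Tor_n^R(I,\kk)=0$ (the last because $\operatorname{pd}_R(I)\le n-1$) vanish, so the connecting map gives the displayed isomorphism. Confirming that all other $\Tor_i^R(I,\kk)_j$ vanish is then a matter of intersecting the row restrictions from \Cref{l:narrow-equiv2} with the vanishing from \Cref{en-props}(3), noting that only the positions listed above can be nonzero.

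For the closing identifications, the plan is to invoke Matlis duality in the form of \Cref{Boij} applied to $U=A$, which yields
\[
\Tor_i^R(I,\kk)_j\;\cong\;\Tor_{i+1}^R(A,\kk)_j\;\cong\;\Hom_\kk\!\left(\Tor_{n-i-1}^R(I^\perp,\kk)_{n-j},\kk\right).
\]
Specializing at $(i,j)=(n-1,n+d)$ gives $\Hom_\kk(\Tor_0^R(I^\perp,\kk)_{-d},\kk)=\Hom_\kk((I^\perp)_{-d},\kk)\cong\kk^a$, since the degree $-d$ part of $I^\perp$ consists of minimal generators as no lower degree generators exist. At $(n-1,n-1+d)$ one gets $\Hom_\kk(\Tor_0^R(I^\perp,\kk)_{1-d},\kk)$, whose dimension is the number of minimal generators of $I^\perp$ in degree $1-d$, namely $b$ by \Cref{l:narrow}(1). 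Finally, at $(n-2,n-1+d)$ duality produces $\Hom_\kk(\Tor_1^R(I^\perp,\kk)_{1-d},\kk)$, which equals $\Hom_\kk(L_A,\kk)$ by \Cref{en-props}(2).

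The main obstacle is really just careful bookkeeping: there is no deep new computation, only a disciplined combing through of the long exact sequence one internal degree at a time. The subtlety worth highlighting is the failure of the sequence to split at $i=n-1$ in internal degree $n-1+d$: the term $\Tor_{n-2}^R(I,\kk)_{n-1+d}$ is not forced to vanish by the general narrow bound since $n-2$ is outside the range covered by \Cref{en-props}(3), so a four-term (rather than a short) exact sequence is what one gets, and the remaining connecting map is exactly what \Cref{en-props}(2) identifies with the dual of $L_A$.
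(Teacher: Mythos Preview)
Your proof is correct and follows essentially the same strategy as the paper's: both arguments use the linearity of the resolutions of $\m^d$ and $\kk$ to force the long exact sequence to split degreewise, then invoke \Cref{l:narrow-equiv2} and \Cref{en-props} for the remaining vanishing, and finally appeal to the duality of \Cref{Boij} together with \Cref{en-props}(2) for the closing identifications. One trivial slip: the parenthetical ``off by two'' in your first case should read ``off by one,'' since $\Tor_{i+1}^R(\kk^a(-d),\kk)$ sits in internal degree $i+1+d$, not $i+2+d$; the conclusion is unaffected.
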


\begin{proof}
The fact that all $\Tor_i^R(I,\kk )_j$ that do not appear in one of the sequences above are equal to zero follows from \cref{l:narrow-equiv2} and \cref{en-props}. 

The modules $\kk$ and $\m^d$ are known to have a linear resolution, see \Cref{ex:resk} and \Cref{ex:resmd},  and hence 
$$\Tor_i^R(\kk^a(-d),\kk)_{j+d}=\Tor_i^R(\kk^a, \kk)_j=0=\Tor_i^R(\m^d,\kk)_{j+d}=0\qquad\text{for $j\ne i$.}$$
This implies the claimed splitting of the long exact sequence in homology. 

The last isomorphisms can be seen using the isomorphism $A^\vee\cong I^\perp$, \cref{Boij} and  \Cref{en-props}~(2). 
\end{proof}

The next corollaries are deduced directly from \cref{thm:ses}. They describe the betti numbers and the minimal free resolution of the ideal defining a $d$-extremely narrow algebra. 
 \begin{cor}
 \label{cor:betti}
Let $d\ge 1$ and $a\ge 1$. If $A=R/I$ is a $d$-extremely narrow algebra with $\dim_\kk\Soc(A)_d=a$, then 
$$
 \beta_{i,j}^R(I)=\begin{cases}
 \beta_{i}^R(\m^d)-a\beta_i^R(\kk ) &\text{if $i\le n-2$ and $j=i+d$}\\
  \dim_\kk (L_A) &\text{if $i=n-2$ and $j=i+d+1$}\\
 a &\text{if $i=n-1$ and $j=i+d$}\\
 \dim_\kk R_{d-1}-an+\dim_\kk L_A &\text{if $i=n-1$ and $j=i+d+1$}\\
 0&\text{otherwise. }
 \end{cases}
$$
With $u_i=\beta_i(\m^d)-a\beta_i(\kk)$, $\ell=\dim_\kk(L_A)$ and $b=\dim_\kk R_{d-1}-an+\dim_\kk L_A$, the minimal free resolution $\mathcal F$ of $I$ over $R$ has the following shape: 
$$
R^{u_0}(-d)\leftarrow R^{u_1}(-d-1)\leftarrow\cdots\leftarrow R^{u_{n-3}}(-d-n+3)\leftarrow \substack{R^{u_{n-2}}(-d-n+2)\\\oplus\\ R^{\ell}(-d-n+1)}\leftarrow \substack{R^b(-d-n+1)\\\oplus\\R^{a}(-d-n)}\leftarrow 0\,.
$$
 \end{cor}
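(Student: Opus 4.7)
The plan is to read off each nonzero betti number of $I$ directly from the short exact sequences and isomorphisms assembled in \Cref{thm:ses}, and then translate the resulting dimension data into the shape of the minimal free resolution.

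First, for $0 \le i \le n-2$ and $j = i+d$, I would use the split short exact sequence
\[ 0 \to \Tor_i^R(I, \kk)_{i+d} \to \Tor_i^R(\m^d, \kk)_{i+d} \to \Tor_i^R(\kk^a, \kk)_i \to 0 \]
to obtain $\beta_{i, i+d}^R(I) = \dim_\kk \Tor_i^R(\m^d, \kk)_{i+d} - a\,\dim_\kk \Tor_i^R(\kk, \kk)_i$. Both $\m^d$ and $\kk$ admit linear resolutions by \Cref{ex:resmd} and \Cref{ex:resk}, so all of $\Tor_i^R(\m^d, \kk)$ sits in internal degree $i+d$ and all of $\Tor_i^R(\kk, \kk)$ sits in internal degree $i$. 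The right-hand side therefore collapses to $\beta_i^R(\m^d) - a\,\beta_i^R(\kk) = u_i$.

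Next, the remaining nonzero betti numbers come from the explicit isomorphisms listed at the end of \Cref{thm:ses}. The isomorphism $\Tor_{n-2}^R(I, \kk)_{n-1+d} \cong \Hom_\kk(L_A, \kk)$ gives $\beta_{n-2, n-1+d}^R(I) = \dim_\kk L_A = \ell$, while the two isomorphisms in top homological degree give $\beta_{n-1, n-1+d}^R(I) = b$ and $\beta_{n-1, n+d}^R(I) = a$. The equality $b = \dim_\kk R_{d-1} - an + \dim_\kk L_A$ is recorded in \Cref{l:narrow}~(1). All other $\Tor_i^R(I, \kk)_j$ vanish by the last assertion of \Cref{thm:ses}, so no other betti numbers contribute.

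With the full betti table in hand, the minimal free resolution is assembled mechanically: each basis vector of $\Tor_i^R(I, \kk)_j$ contributes a summand $R(-j)$ in homological degree $i$. In degrees $0 \le i \le n-3$ only the linear summands $R^{u_i}(-d-i)$ appear; in degree $n-2$ one collects $R^{u_{n-2}}(-d-n+2) \oplus R^{\ell}(-d-n+1)$; and in the final degree $n-1$ one collects $R^b(-d-n+1) \oplus R^a(-d-n)$, after which the resolution terminates. There is no real obstacle here beyond careful bookkeeping of internal degree shifts, since all of the substantive homological computation has already been carried out in \Cref{thm:ses}.
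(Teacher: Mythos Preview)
Your proof is correct and follows exactly the approach intended in the paper, which simply states that the corollary is ``deduced directly from \Cref{thm:ses}''. One small quibble: you call the short exact sequence for $i \le n-2$ ``split'', but \Cref{thm:ses} does not assert splitting, and you do not need it --- exactness alone gives the dimension count. Also note that your computation $\beta_{n-1,\,n-1+d}^R(I) = b$ and $\beta_{n-1,\,n+d}^R(I) = a$ is the correct one (it matches both the displayed resolution shape and the isomorphisms in \Cref{thm:ses}); the case display in the statement has these two values transposed, which is a typo in the paper rather than an error in your argument.
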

 
 \begin{cor} 
 \label{res-as-kernel}
 Let $d\ge 1$. If $A=R/I$ is a $d$-extremely narrow algebra with socle polynomial $bz^{d-1}+az^d$, then the exact sequence \eqref{exact-seq}  induces short exact sequences 
 \begin{align*}
 0\to \Tor_i^R(I,k)&\to \Tor_i^R(\m^d,\kk)\to \Tor_i^R(\kk^a(-d), \kk)\to 0\qquad\text{for all $i\le n-3$}\\
&0\to \Tor_n^R(\kk^a(-d),\kk)\to \Tor_{n-1}^R(I,\kk)\to 0\,.
 \end{align*}
Furthermore, if $L_A=0$, then the first sequence is also exact for $i=n-2$. 

Consequently, if $\mathcal F$, $\mathcal L$, and $\mathcal G$ respectively denote minimal free resolutions of $I$, $\m^d$, and $\kk^a(-d)$, and we extend $\varphi\colon \m^d\to \kk^a(-d)$ to a map of complexes $\varphi\colon \mathcal L\to \mathcal G$, then 
$$
\mathcal F_{\leqslant n-3}\cong \Ker(\varphi_{\leqslant n-3})\,.
$$
Furthermore, if $L_A=0$, then $
\mathcal F_{\leqslant n-2}\cong \Ker(\varphi_{\leqslant n-2})$. 
 \end{cor}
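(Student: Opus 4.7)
The plan is to extract the claimed short exact sequences from the long exact sequence in $\Tor$ associated to \eqref{exact-seq}, leveraging \Cref{thm:ses} and the concentration of each $\Tor$ in a single internal degree. By \Cref{ex:resk} and \Cref{ex:resmd}, both $\Tor_i^R(\m^d,\kk)$ and $\Tor_i^R(\kk^a(-d),\kk)$ live in internal degree $i+d$, and by \Cref{en-props}(3), $\Tor_i^R(I,\kk)$ also lives in degree $i+d$ for $i\le n-3$. Hence for such $i$ every connecting homomorphism in the long exact sequence lands between graded pieces in distinct degrees and therefore vanishes; combining with the graded surjection from \Cref{thm:ses} gives the first short exact sequence. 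For the second, use that $\mathrm{pd}_R(\m^d)=n-1$ (cf.~\Cref{ex:resmd}) so that $\Tor_n^R(\m^d,\kk)=0$, and the long exact sequence then produces the desired map $\Tor_n^R(\kk^a(-d),\kk)\hookrightarrow\Tor_{n-1}^R(I,\kk)$.

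When $L_A=0$, I would extend the range by one step. \Cref{en-props}(2) gives $\Tor_1^R(I^\perp,\kk)\cong L_A=0$, and Matlis duality (\Cref{Boij}) translates this to $\Tor_{n-2}^R(I,\kk)_{n-1+d}=0$. Since $A$ is narrow, \Cref{l:narrow-equiv2} confines $\Tor_{n-2}^R(I,\kk)$ to the degrees $n-2+d$ and $n-1+d$, so the vanishing forces concentration in degree $n-2+d$ alone. The same degree-matching argument then delivers the short exact sequence at $i=n-2$.

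For the final assertion, lift $\varphi\colon \m^d\to \kk^a(-d)$ to a chain map $\varphi\colon \mathcal L\to \mathcal G$. Under the identification of $\Tor_i(-,\kk)$ with $(-)_i/\m (-)_i$ valid for minimal free resolutions, the induced $\Tor_i(\varphi,\kk)$ is $\varphi_i\otimes_R\kk$; its surjectivity for $i\le n-2$ (established above, or, when $L_A=0$, also for $i=n-2$) together with graded Nakayama forces $\varphi_i$ itself to be surjective in this range. Since $\mathcal G_i$ is free, the sequence $0\to\Ker(\varphi_i)\to \mathcal L_i\to \mathcal G_i\to 0$ splits, so $\Ker(\varphi)_{\le n-2}$ is a subcomplex of free $R$-modules whose differentials, being restrictions of those of $\mathcal L$, have entries in $\m$. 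The long exact sequence in homology for the short exact sequence of complexes $0\to \Ker(\varphi)\to \mathcal L\to \mathcal G\to 0$ (in the range where $\varphi$ is surjective), combined with the acyclicity of $\mathcal L$ and $\mathcal G$ in positive degrees, yields $H_0(\Ker(\varphi))=I$ and $H_i(\Ker(\varphi))=0$ for $1\le i\le n-3$. Hence $\Ker(\varphi)_{\le n-3}$ is a minimal free resolution of $I$ truncated in homological degree $n-3$; its ranks match those of $\mathcal F$ by \Cref{cor:betti}, so by uniqueness of minimal free resolutions $\Ker(\varphi)_{\le n-3}\cong \mathcal F_{\le n-3}$. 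When $L_A=0$, the same reasoning extends the identification to level $n-2$.

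The main delicate point is ensuring the graded short exact sequences of \Cref{thm:ses} translate cleanly into ungraded ones: the degree-matching via \Cref{en-props}(3) is the crucial mechanism, and its failure at $i=n-2$ when $L_A\neq 0$ is precisely what necessitates the conditional hypothesis in the second assertion.
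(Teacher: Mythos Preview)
Your proof is correct and matches the paper's approach, which simply records the corollary as an immediate consequence of \Cref{thm:ses}. One small point: your claim that $H_i(\Ker(\varphi))=0$ for $1\le i\le n-3$ overshoots at $i=n-3$ when $L_A\ne 0$ (indeed $\Ker(\varphi_{n-2})$ then has rank $\beta_{n-2}(\m^d)-a\beta_{n-2}(\kk)<\beta_{n-2}^R(I)$, so it cannot cover the $(n{-}2)$-nd syzygy of $I$), but this is harmless---identifying $\Ker(\varphi)_{\le n-3}$ with $\mathcal F_{\le n-3}$ only requires acyclicity through degree $n-4$ together with the rank match at level $n-3$, both of which you establish.
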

 
\begin{rem}
The resolutions $\mathcal G$, $\mathcal L$ are known, see \Cref{ex:resk} and \Cref{ex:resmd}. Then \cref{res-as-kernel} can  be used to describe the first $n-3$ differentials in $\mathcal F$. Note that the matrices representing these differentials have linear entries. If one wants to describe the matrices corresponding to the last two differentials in the minimal free resolution of a $d$-extremely narrow algebra $A$, it may be useful to think of them as being the transpose to the first two matrices in a minimal free resolution of $A^\vee=I^\perp$ over $R$. 
 \end{rem}

 We end this section with a result that addresses betti numbers of modules over a narrow algebra $A$. In conjunction with the formulas in \cref{cor:betti}, this result allows one to compute the betti numbers of $\kk$ over $A$ when $A$ is a $d$-extremely narrow algebra.

We first need to introduce the necessary concepts. 
Let $A=R/I$ be an algebra quotient of the polynomial ring $R=\kk[x_1, \dots, x_n]$. 
If $M$ is a finitely generated graded $A$-module, then $\beta_n^A(M)=\dim_\kk \Tor_n^A(M,\kk)$ and  the {\it Poincar\'e series} $P_M^A(t)$ of $M$ over $A$ is the generating series of the sequence $\{\beta_n^A(M)\}_n$ of betti numbers of $M$, namely 
\[
P_M^A(t) :=\sum_{n=0}^\infty \beta_n^A(M)t^n\,.
\]
Since minimal free resolutions of modules over a non-regular ring are often  infinite, attention has been given to understanding whether Poincar\'e series are rational. A discussion of this problem can be found in \cite{Av}. In general, one has an inequality
\begin{equation}
\label{e:Golod}
P_\kk^A(t)\le \frac{(1+t)^n}{1-t(P_\kk^R(t)-1)}\,.
\end{equation}
When equality holds in \eqref{e:Golod}, the ring $A$ is said to be {\it Golod}, see \cite[Section 5]{Av}. While the original definition is usually stated for local rings, it translates as usual to the graded case at hand. When $A$ is Golod, not only $P_\kk^A(t)$ is rational, but it is also known that $P_M^A(t)$ is rational, sharing the same denominator, for all finitely generated graded $A$-modules $M$.

Another instance when $P_\kk^A(t)$ is rational is when the algebra  $A$ is {\it Koszul}, meaning that $\beta^A_{i,j}(\kk)=0$ for all $i\ne j$, or, equivalently, that the minimal graded  free resolution of $\kk$ over $A$ is linear. In this case, one has
\[
P_\kk^A(t)=\frac{1}{\H_A(-t)}\,.
\]

\begin{prop}\label{prop:GolodKoszul}
Let $A=R/I$ be an algebra quotient of $R=\kk[x_1, \dots, x_n]$. Then 
\begin{enumerate}[\quad\rm(1)]
\item \label{r:Golod} If $A$ is narrow and $t(I)\ge 3$, then $A$ is Golod. 
\item \label{r:Koszul} If $A$ is $2$-extremely narrow, then $A$ is Koszul.
\end{enumerate}
In either case the Poincar\'e series of all finitely generated graded $A$-modules are rational, sharing the same denominator.
\end{prop}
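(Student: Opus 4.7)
The plan is to dispatch (1) and (2) separately, then deduce the concluding rationality claim from each.

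For (1), I would set $d=s(A)$ and use \Cref{l:narrow}(3) to split into two subcases: either $I=\m^{d+1}$ (so $d+1=t(I)\geq 3$), or $t(I)=d\geq 3$. In the first subcase, $A\cong R/\m^m$ with $m\geq 3$, and the Golodness of $R/\m^m$ for all $m\geq 2$ is classical (see for example \cite[\S 5]{Av}). In the second subcase, \Cref{l:narrow-equiv2} implies that the Koszul homology $H_*(K^A)\cong \Tor^R(A,\kk)$ is supported in bidegrees $(i,j)$ with either $(i,j)=(0,0)$ or $j-i\in\{d-1,d\}$ for $i\geq 1$. A direct bidegree count then shows that any product of two positive-homological-degree classes lands in a bidegree with $j-i\in\{2(d-1),2d-1,2d\}$, all of which exceed $d$ when $d\geq 3$ and hence lie outside the support of $H_*(K^A)$; every such product must therefore vanish. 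An analogous count for an $r$-fold Massey product ($r\geq 2$), whose bidegree satisfies $j-i\geq r(d-1)-(r-2)=r(d-2)+2>d$ whenever $d\geq 3$, forces all higher Massey products to vanish as well. By Avramov's trivial-Massey-operation criterion for Golodness (\cite[\S 5]{Av}), $A$ is Golod.

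For (2), the 2-extremely narrow hypothesis gives $s(A)=t(I)=2$, and by \Cref{en-props}(4) $I$ is generated by quadrics, so $\m_A^3=0$. The plan is to establish Koszulness by producing a linear minimal $A$-free resolution of $\kk$. I would leverage the structure of the $R$-free resolution of $A$ from \Cref{cor:betti}, which is linear in all homological positions except possibly the last two, and assemble via Shamash's construction an $A$-free resolution of $\kk$ from the Koszul complex of $R$ together with this $R$-resolution. The restriction $L_{F_1,\dots,F_a}\subseteq xR_0^a$ on linear syzygies of the Macaulay dual is designed to supply the extra control needed so that, after minimization, the Shamash resolution remains concentrated in the linear strand. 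Carrying out this minimization rigorously and controlling the contribution of the non-linear tail of the $R$-resolution will be the main technical obstacle.

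The concluding rationality claim then follows in each case from standard facts. In case (1), Golodness yields
\[
P_M^A(t)=\frac{\mathrm{Num}_M(t)}{1-t(P_A^R(t)-1)}
\]
for every finitely generated graded $A$-module $M$, with denominator independent of $M$. In case (2), since $\m_A^3=0$, a classical theorem (due to Sj\"odin, with refinements by L\"ofwall, Lescot, and Avramov) guarantees that all Poincar\'e series of finitely generated graded $A$-modules are rational with a common denominator; Koszulness identifies the specific value $P_\kk^A(t)=1/H_A(-t)$, which serves as such a common denominator.
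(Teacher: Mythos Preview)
Your argument for (1) is correct and essentially reproves what the paper obtains in one line by citing \cite[Observation 5.6]{KSV}: the paper simply observes that narrowness with $t(I)\ge 3$ forces $s(A)\le 2t(I)-3$ (since $2t(I)-s(A)\ge t(I)\ge 3$), which is exactly the hypothesis of that observation. Your bidegree bookkeeping for products and Massey products is valid and is in fact the mechanism underlying the cited result, so this is a matter of packaging, not substance.

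For (2), however, there is a genuine gap. You propose to obtain Koszulness by minimizing the Shamash resolution, but you do not indicate how the hypothesis $L_{F_1,\dots,F_a}\subseteq xR_0^a$ would actually accomplish this minimization, and you flag it yourself as ``the main technical obstacle.'' The paper's route is entirely different and rests on a structural fact you did not notice: condition~(ii) in \Cref{def:extremelynarrow} forces $\dim_\kk L_A\le a$, so \Cref{l:narrow}(1) gives $0\le b=n-an+\dim_\kk L_A\le n-an+a$, and for $n\ge 2$ this forces $a=1$, i.e.\ $\dim_\kk\m_A^2=1$ (together with $b\le 1$, hence $\dim_\kk\Soc(A)\le 2\le n-1$). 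At that point the paper invokes \cite[Theorem 4.1, Corollary 4.4]{AIS}, which for such short algebras yields Koszulness \emph{and} rationality of all module Poincar\'e series with common denominator $1-nt+t^2$. Without the reduction to $a=1$, the Shamash approach has no clear endpoint.

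Your concluding rationality argument in case~(2) also fails as stated. The assertion that $\m_A^3=0$ alone guarantees rational Poincar\'e series for all modules is false: Anick's example \cite{An} (noted in the paper immediately after this proposition) is precisely a graded algebra with $\m_A^3=0$ and irrational $P_\kk^A(t)$. Koszulness by itself gives rationality of $P_\kk^A$, but the rationality for \emph{all} modules with a common denominator requires the additional input from \cite{AIS}, which in turn requires the $a=1$ reduction you did not carry out.
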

\begin{proof}
(1) If $A$ is a narrow algebra and $t(I)\ge 3$, then we see that the inequality $s(A)\le 2t(I)-3$ holds, and thus $A$ is a Golod ring by \cite[Observation 5.6]{KSV}. 

(2) Assume $A$ is $2$-extremely narrow with socle polynomial $bz+az^2$. 
Condition (ii) of \cref{def:extremelynarrow} implies $\dim_\kk L_A\le a$. Using  \cref{l:narrow}(1), we have  
\begin{equation}
\label{eq:b}
0\le b=n-an+\dim_\kk L_A\le n-an+a\,.
\end{equation}
If $n>2$, this inequality implies $a=1$, or, in other words,  $\dim_\kk {\m_A}^2= 1$. We also have $a=1$ when $n=2$, because $a=\dim_\kk{\m_A}^2\le \frac{n(n-1)}{2}$ when $A$ is Artinian. From \eqref{eq:b} we also get  $b\le 1$. One can see that $b=0$ when $n=2$.  

We have thus $\dim_\kk{\m_A}^2= 1$ and $\dim_\kk\Soc(A)=a+b\le n-1$.  By \cite[Theorem 4.1, Corollary 4.4]{AIS}, we see that $A$ is Koszul  with $P_\kk^A(t)=(1-nt+t^2)^{-1}$ and the Poincar\'e series of all finitely generated graded $A$-modules are rational, with denominator equal to $1-nt+t^2$. (The results of \cite{AIS} imply that $A$ is in fact {\it absolutely Koszul}, in the sense of \cite{C}.)
\end{proof}

\begin{rem}
Assume that $A=R/I$ is narrow  with  $t(I)=2$, which is equivalent to ${\m_A}^3=0$.
An example  of Anick \cite{An} shows there exist such algebras for which $P_\kk^A(t)$ is not rational. However, this does not happen when $A$ is $2$-extremely narrow, as shown in \Cref{prop:GolodKoszul} \eqref{r:Koszul} above.
\end{rem}

\section{Quadratic principal symmetric ideals}
\label{sec: quadratic symmetric} 

In this section we begin our task of studying principal symmetric ideals. Such an ideal is denoted $(f)_{\sym_n}$, where $f\in R$ is a homogeneous polynomial. As explained in \cref{s:prelim}, such ideals can be parametrized by points in projective space. In this section we focus on the case of quadratic polynomials $f$, i.e. $\mathrm{deg}(f)=2$, in order to gain intuition towards the general case.

 We denote by $\mu(I)$ the minimal number of generators of a homogeneous ideal $I$. We record here a general result on the minimal number of generators of a principal symmetric ideal, needed for our later arguments. 

\begin{lem}
\label{upper-s}{\rm (Upper semicontinuity of the minimal number of generators.)} Let $m\ge 1$, $d\ge 1$.  If there exists a principal symmetric ideal $I'$ generated in degree $d$ such that $\mu(I')\ge m$, then a general principal symmetric ideal ideal $I$ generated in degree $d$ satisfies $\mu(I)\ge m$. 
\end{lem}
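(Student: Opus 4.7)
The plan is to realize $\mu((f_c)_{\sym_n})$ as the rank of a matrix whose entries are linear forms in $c$, and then invoke the standard fact that ``rank at least $m$'' is a Zariski-open condition on the parameter space.

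First I would observe that since every principal symmetric ideal $(f_c)_{\sym_n}$ with $f_c \in R_d$ is generated in the single degree $d$, we have
\[
\mu\bigl((f_c)_{\sym_n}\bigr) \;=\; \dim_\kk \bigl((f_c)_{\sym_n}\bigr)_d \;=\; \dim_\kk \Span_\kk\{\sigma \cdot f_c : \sigma \in \sym_n\},
\]
since the $\sym_n$-orbit of $f_c$ spans the degree-$d$ component of the ideal and (being concentrated in a single degree) contains a minimal generating set.

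Next I would make this explicit in coordinates. Writing $f_c = \sum_{i=1}^N c_i m_i$, the $\sym_n$-action permutes $M_d$; let $\pi(\sigma) \in \mathrm{Sym}(M_d)$ denote the resulting permutation, so $\sigma \cdot m_i = m_{\pi(\sigma)(i)}$ and hence $\sigma \cdot f_c = \sum_j c_{\pi(\sigma)^{-1}(j)}\, m_j$. Assemble the coordinate vectors of $\{\sigma \cdot f_c\}_{\sigma \in \sym_n}$ in the basis $M_d$ into a matrix $A(c)$ with rows indexed by $\sym_n$ and columns indexed by $M_d$; each entry of $A(c)$ is one of the coordinates $c_1,\dots,c_N$ (in particular a linear form in $c$), and by the previous step
\[
\mu\bigl((f_c)_{\sym_n}\bigr) \;=\; \rank_\kk A(c).
\]

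Finally, the condition $\rank_\kk A(c) \ge m$ is equivalent to the non-vanishing of at least one $m \times m$ minor of $A(c)$. Each such minor is a homogeneous polynomial of degree $m$ in $c_1, \dots, c_N$, so the locus
\[
U \;:=\; \{c \in \P^{N-1} : \rank_\kk A(c) \ge m\}
\]
is Zariski-open in $\P^{N-1}$, as the complement of the common vanishing locus of all $m \times m$ minors of $A(c)$. The hypothesis that some principal symmetric ideal $I' = \Phi(c')$ satisfies $\mu(I') \ge m$ says exactly that $c' \in U$, so $U$ is nonempty, proving that a general $c \in \P^{N-1}$ yields $\mu\bigl((f_c)_{\sym_n}\bigr) \ge m$.

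There is no real obstacle here; the only point that warrants care is the bookkeeping involved in writing down $A(c)$ so that its rank is genuinely $\mu$ rather than some larger quantity, but this is handled by the observation that generation in a single degree automatically lets us pass from a spanning set of $(I_c)_d$ to a minimal generating set of $I_c$.
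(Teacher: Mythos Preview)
Your proof is correct and follows essentially the same argument as the paper: both identify $\mu\bigl((f_c)_{\sym_n}\bigr)$ with the rank of the $(n!)\times N$ matrix of coefficients of $\{\sigma\cdot f_c\}$ in the monomial basis, and then use that the locus where this rank is at least $m$ is the Zariski-open complement of the vanishing of all $m\times m$ minors, nonempty by the existence of $I'$.
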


\begin{proof}
Using  the notation in \cref{s:prelim}, 
let $I=(f)_{\sym_n}$, with $f=f_c=\sum_{i=1}^N c_im_i$ and $$c=(c_1:\cdots:c_N)\in \P^{N-1}.$$ Note that 
$$
\mu(I)=\dim_\kk I_d=\dim_\kk\langle \sigma\cdot f : \sigma \in \sym_n\rangle
$$
is given by the rank of the $(n!)\times N$ matrix $M$ of coefficients for the set of degree $d$ polynomials  $\sigma\cdot f $ with respect to the monomial basis $m_1, \dots, m_N$ of $R_d$. The locus where $\dim_k I_d <m$ is cut out by the ideal $I_m(M)$ of $m\times m$ minors of $M$, which are homogeneous polynomials in $c_1, \dots, c_N$.  Set $U=\P^N\setminus V(I_m(M))$. This is a Zariski-open set of $\P^N$ which is not empty by the hypothesis on the existence of the ideal $I'$. Thus, if $c\in U$, then $\dim_\kk I_d\ge m$. 
\end{proof}

We start our investigation by looking at a specific case, cf. Example~\ref{ex:Liana} below. We then describe the general behavior, when the degree of the generator is $2$. 

\begin{ex}
\label{ex:Liana}
Assume $n\ge 2$. Let $I=(x_1^2-x_2^2+x_1x_2)_{\sym_n}$ and set $A=R/I$.  Observe 
$$
I= (x_1^2-x_2^2, x_1x_2)_{\sym_n}=(x_i^2-x_j^2, x_kx_\ell\colon  i <  j, k <  \ell)=(x_1^2-x_j^2, x_kx_\ell\colon j\ne 1, k<\ell)\,.
$$
The generators listed on the right are clearly linearly independent, and thus 
$$
\dim_\kk I_2=\dim_\kk R_2-1\,.
$$
It is straightforward to verify that the inverse system of  $I$ is 
\[
I^\perp=( y_1^{(2)}+\cdots+y_n^{(2)} ).
\]
Since $I^\perp$ is a cyclic $R$-module generated by $y_1^{(2)}+\cdots+y_n^{(2)}$, $A$ is Gorenstein; see \Cref{r:cyclic}.
 \end{ex}

The above example is an illustration of the following general result.  

\begin{thm}
\label{thm:soclequadratic}
Assume $n\ge 2$ and the field $\kk$ is infinite. A general principal symmetric ideal $I$ generated by a homogeneous quadratic polynomial yields a quotient $A=R/I$ so that $A$ is artinian Gorenstein with ${\m_A}^3=0\neq {\m_A}^2$.

 Furthermore,  $A$ is $2$-extremely narrow, the Macaulay inverse system $I^\perp$ is a cyclic $R$-module generated by a symmetric quadratic polynomial, and $L_A=0$. 
\end{thm}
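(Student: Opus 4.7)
The strategy is to use \Cref{ex:Liana} as an explicit witness whose key invariants persist on a nonempty Zariski-open subset of $\P^{N-1}$, combined with an a priori upper bound from representation theory.

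First I would pin down $\dim_\kk(I_c)_2$ for general $c$. The upper bound comes from decomposing $R_2 = \Sym^2(R_1) = \Sym^2(\Sp_{(n)} \oplus \Sp_{(n-1,1)})$ as an $\sym_n$-representation: the trivial representation $\Sp_{(n)}$ occurs in $R_2$ with multiplicity two, and since $\sym_n$ acts trivially on that isotypic component, the $\sym_n$-submodule generated by any single $f \in R_2$ meets it in a line of dimension at most one. Hence $\dim_\kk (I_c)_2 \leq \binom{n+1}{2}-1$ for all $c$. The matching lower bound comes from \Cref{ex:Liana}, which exhibits a specific $c_0$ realizing this dimension; the proof of \Cref{upper-s} applies verbatim to $\dim_\kk(I_c)_e$ for any fixed $e$, via the rank of the matrix of coefficients of $\{m \cdot (\sigma \cdot f_c) : m \in R_{e-2},\, \sigma \in \sym_n\}$, so it gives $\dim_\kk(I_c)_2 \geq \binom{n+1}{2}-1$ on a Zariski-open set. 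Applied analogously with $e=3$ and using the direct check $\m^3 \subseteq I_{c_0}$ in \Cref{ex:Liana}, it also yields $\m^3 \subseteq I_c$ for general $c$.

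These two statements determine the Hilbert function of $A = R/I$ as $1, n, 1, 0, 0, \ldots$, so $A$ is artinian with $\m_A^3 = 0 \ne \m_A^2$ and $s(A) = t(I) = 2$. The one-dimensional space $(I^\perp)_{-2}$ is $\sym_n$-stable, so it lies in the trivial isotypic component of $S_{-2}$; thus $(I^\perp)_{-2} = \kk F$ with $F$ a symmetric (i.e.\ $\sym_n$-invariant) quadratic form in the $y_i$'s, depending algebraically on $c$ on the generic open set. To obtain $L_A = 0$ for general $c$, observe that $\dim L_{A_c}$ equals the dimension of the kernel of the multiplication map $R_1 \to S_{-1}$, $\ell \mapsto \ell\circ F_c$, and this kernel dimension is upper semicontinuous in $c$. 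The witness $F_{c_0} = y_1^{(2)}+\cdots+y_n^{(2)}$ from \Cref{ex:Liana} satisfies $x_i \circ F_{c_0} = y_i$, so that map is an isomorphism there and $L_{A_{c_0}}=0$; hence $L_{A_c} = 0$ on a nonempty Zariski-open subset.

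Finally, with $L_A=0$ in force, the map $\ell \mapsto \ell\circ F$ is an isomorphism $R_1 \xrightarrow{\sim} S_{-1}$, so $R_1 \circ F = S_{-1}$; and since $F \ne 0$ also $R_2 \circ F = S_0$. Consequently $\dim_\kk RF = 1+n+1 = \dim_\kk I^\perp$, so $I^\perp = RF$ is cyclic and $A$ is Gorenstein by \Cref{r:cyclic}. The $2$-extremely narrow condition then reduces to $s(A)=t(I)=2$ (already established) together with $L_A \subseteq xR_0^{\,1}$ for some $x$, which is vacuous since $L_A=0$. The main obstacle I anticipate is the representation-theoretic bookkeeping identifying the multiplicity-two trivial $\sym_n$-isotypic inside $R_2$; everything else flows from the explicit witness of \Cref{ex:Liana} combined with upper semicontinuity of the relevant dimensions.
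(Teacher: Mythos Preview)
Your proof is largely correct and takes a genuinely different route from the paper's, but it imports a hypothesis the theorem does not make. The paper never proves the upper bound $\dim_\kk (I_c)_2 \le \binom{n+1}{2}-1$; instead it verifies the two hypotheses of \Cref{strategy}. Hypothesis~(1) there is exactly your semicontinuity lower bound from \Cref{ex:Liana} and \Cref{upper-s}. For hypothesis~(2) the paper writes down the explicit element
\[
F \;=\; \beta\sum_i y_i^{(2)} \;-\; \alpha\!\sum_{i<j} y_iy_j,\qquad \alpha=\textstyle\sum_i a_i,\ \ \beta=\textstyle\sum_{i<j} b_{ij},
\]
checks $F\in (I^\perp)_{-2}$ directly, and proves $L_F=0$ by computing the determinant of the coefficient matrix of the linear system $\ell\circ F=0$ and observing it is nonzero at the witness of \Cref{ex:Liana}. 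The explicit $F$ does double duty: its nonvanishing on an open set already gives $\dim_\kk (I^\perp)_{-2}\ge 1$ and hence $\dim_\kk A_2\ge 1$, so no separate upper bound on $\dim_\kk I_2$ is needed; and it is visibly symmetric, so no appeal to the $\sym_n$-structure of $(I^\perp)_{-2}$ is required. Your route---rep theory for the upper bound, $\sym_n$-stability of the one-dimensional $(I^\perp)_{-2}$ for the symmetry of $F$, semicontinuity plus the witness for $L_A=0$---is more conceptual and bypasses the determinant calculation, at the cost of needing $(I_c^\perp)_{-2}$ to vary algebraically with $c$, which is true but not entirely trivial.

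The genuine gap is that your isotypic argument for $\dim_\kk(I_c)_2\le\binom{n+1}{2}-1$ uses semisimplicity of $\kk[\sym_n]$, i.e.\ $\operatorname{char}\kk=0$ or $\operatorname{char}\kk\nmid n!$, whereas the theorem assumes only that $\kk$ is infinite. Without semisimplicity there is no canonical ``trivial isotypic component'' to intersect with, and one must argue differently that $(I_c)_2\ne R_2$; the paper's explicit $F$ (equivalently, the observation that the invariant functional $\beta m_{(2)}-\alpha m_{(1,1)}$ annihilates every $\sigma\cdot f_c$) is exactly such an argument, valid over any field. A smaller wrinkle occurs at $n=2$: the sign representation does sit inside $S_{-2}$ via $y_1^{(2)}-y_2^{(2)}$, so ``one-dimensional and $\sym_n$-stable'' does not by itself force $(I^\perp)_{-2}$ into the trivial component; you would need either the explicit $F$ again, or the remark that $y_1^{(2)}-y_2^{(2)}\in (I^\perp)_{-2}$ is the closed condition $a_1=a_2$.
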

\begin{proof}
As explained in \cref{s:prelim}, and with the notation there, we consider $f=f_c$ with $c\in \mathbb P^{N-1}$, where $N=\binom{n+1}{2}$ and $f=c_1m_1+c_2m_2+\cdots +c_Nm_N$. We find it convenient to rename the coefficients $c_k$ as follows: 
\[
c_k=\begin{cases}
a_i&\text{if $m_k=x_i^2$}\\
b_{ij}&\text{if $m_k=x_ix_j$ with $i< j$.}
\end{cases}
\]
With this notation, we have: 
\begin{equation}
\label{e:deg2f}
f =\sum_{i=1}^n a_ix_i^2+ \sum_{1\leq i<j\leq N} b_{ij} x_ix_j\,.
\end{equation}

We first show that for  $f$ general, the hypotheses (1) and (2) in \Cref{strategy} are satisfied. 
To prove (1),  use \Cref{ex:Liana}  and upper semicontinuity (\cref{upper-s}) to  find a non-empty Zariski open set $U$ such that, if $c\in U$, then $\dim_\kk I_2\ge \dim_\kk  R_2-1$. 

To prove (2), define 
\begin{equation}\label{eq:Fdeg2}
F =\beta \left( \sum_{i=1}^n y_i^{(2)} \right)- \alpha \left(\sum_{\scriptscriptstyle 1\leq i<j\leq n}  y_iy_j\right)
\end{equation}
where 
$$\alpha=\sum_{i=1}^n a_i\qquad\text{and}\qquad \beta =\sum_{1\leq i<j\leq n} b_{ij}\,.
$$
A computation shows that $(\sigma\cdot f)\circ F=0$ for each $\sigma\in \sym_n$ and hence $F\in (I^\perp)_{-2}$.

We show next that, for $f$ general, we have $L_F=0$.  Suppose that $\ell=\sum_{i=1}^n e_i x_i$ satisfies $\ell\circ F=0$. This translates as 
\[
\alpha \left( \sum_{\scriptscriptstyle j\neq k}^n e_j \right) - \beta e_k=0 \text{ for all }1\leq k\leq n.
\]
Regarding this as a system of linear equations with indeterminates $e_1, \dots, e_n$, the coefficient matrix of this system is the $n\times n$ matrix
\[
M=\begin{bmatrix}
-\beta & \alpha & \alpha & \cdots & \alpha\\
\alpha & -\beta & \alpha & \cdots & \alpha\\
\alpha & \alpha & -\beta & \cdots & \alpha\\
\vdots & \vdots & \vdots && \vdots \\
\alpha&\alpha& \alpha & \cdots & -\beta
\end{bmatrix}
\]
with 
$$\det(M)=((n-1)\alpha-\beta)(\alpha-\beta)^{n-1}\,.
$$
When $c\in U'=\P^{N-1}\setminus V(\det(M))$ one obtains $\ell=0$ and hence $\dim_\kk L_A=0$. Further, note that, if  $f$ is as in \Cref{ex:Liana}, then $\alpha=0$ and $\beta=1$, and hence $\det(M)\ne 0$ in this case, showing that $U'$ is non-empty. 

Finally, $U\cap U'$ is not empty, since both $U$ and $U'$ are non-empty Zariski open sets and $\kk$ is assumed infinite. 
Thus, for $c\in U\cap U'$, \cref{strategy} shows that the algebra  $A$ is $2$-extremely narrow with socle polynomial $bz+z^2$, and 
$$
b=n-n+\dim_\kk L_A=0\,.
$$
This shows that the socle is 1-dimensional concentrated in degree two, and $I^\perp=(F)$.  The fact that $A$ is Gorenstein follows from \cref{r:cyclic}. 
\end{proof}

We now establish the $\sym_n$-equivariant structure of the resolution of a general quadratic principal symmetric ideal. 

\begin{thm}
\label{thm:resolutionquadratic}
Let $n \geq 2$ and let $\kk$ be an infinite field with ${\rm char}(\kk )=0$. A  general principal symmetric ideal $I=(f_c)_{\sym_n}$ with $\deg(f_c)=2$ 
satisfies \footnote{Recall that if $\alpha$ is a tuple which is  not a partition then $\Sp_\alpha=0$ by our convention.}
\[
\Tor^R_i(R/I,k) = 
\begin{cases}
\Sp_{(n-i, 2,1^{i-2})}\oplus  { \Sp_{(n-i, 1^{i})}}^2\oplus  {\Sp_{(n-i-1, 2,1^{i-1})}}^2 \\ \,\,\oplus \   {\Sp_{(n-i-1, 1^{i+1})}}^2 \oplus \Sp_{(n-i-2, 2,1^{i})} & \text{for } 0\leq i< n\\
\Sp_{(1^n)}\oplus  \Sp_{(2,1^{n-2})} &\text{for } i=n.
\end{cases}.
\]
\end{thm}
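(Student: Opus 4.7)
The plan is to exploit the $\sym_n$-equivariant short exact sequence associated to a general quadratic principal symmetric ideal. By \Cref{thm:soclequadratic}, for such an $I$ the quotient $A = R/I$ is $2$-extremely narrow with $L_A = 0$ and cyclic Macaulay inverse system generated by the $\sym_n$-invariant form $F$ of \eqref{eq:Fdeg2}. Hence \Cref{l:narrow}(5) produces the $\sym_n$-equivariant short exact sequence in $\MG R$
\[
0 \to I \to \m^2 \xrightarrow{\varphi} \kk(-2) \to 0,
\]
where $\kk(-2)$ carries the trivial $\sym_n$-action. From the equivariant Tor long exact sequence attached to $0 \to I \to R \to R/I \to 0$ we obtain an isomorphism $\Tor_i^R(R/I, \kk) \cong \Tor_{i-1}^R(I, \kk)$ in $\MG R$ for $i \geq 1$, so computing the $\sym_n$-structure of $\Tor_i^R(R/I,\kk)$ reduces to computing that of $\Tor_{i-1}^R(I, \kk)$.

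Tensoring the displayed short exact sequence with $\kk$ and invoking \Cref{thm:ses} with $d = 2$, $a = 1$, $b = 0$, $\ell = 0$, the long exact sequence of $\Tor$ splits into $\sym_n$-equivariant short exact sequences
\[
0 \to \Tor_i^R(I, \kk) \to \Tor_i^R(\m^2, \kk) \to \Tor_i^R(\kk(-2), \kk) \to 0 \qquad (i \leq n-2),
\]
along with edge identifications at $i = n-1,n$, notably $\Tor_{n-1}^R(I, \kk)_{n+2} \cong \Tor_n^R(\kk(-2), \kk)_n \cong \Sp_{(1^n)}$ via \Cref{ex:resk}. Because $\ch \kk = 0$ (so $\kk[\sym_n]$ is semisimple), each of these sequences splits as $\sym_n$-representations, yielding the identity
\[
[\Tor_i^R(I, \kk)] \;=\; [\Tor_i^R(\m^2, \kk)] \;-\; [\Tor_i^R(\kk(-2), \kk)]
\]
in the Grothendieck ring of $\sym_n$-modules.

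The two constituent pieces are accessible from earlier results in the paper. First, \Cref{ex:resk} gives $\Tor_i^R(\kk(-2), \kk) \cong \Sp_{(n-i, 1^i)} \oplus \Sp_{(n-i+1, 1^{i-1})}$. Second, \Cref{lem:Lirreps} identifies $\Tor_i^R(\m^2, \kk)$ with the restriction of the hook Schur module $\SS_{(2, 1^i)}$ from $\GL_n(\kk)$ to $\sym_n$. To evaluate this restriction concretely, I would use the $\sym_n$-equivariant Koszul strand
\[
\bw^{i+1} R_1 \otimes_\kk R_1 \to \bw^i R_1 \otimes_\kk R_2 \to \bw^{i-1} R_1 \otimes_\kk R_3,
\]
which exhibits $\SS_{(2, 1^i)}$ as its middle cohomology, and expand each factor using $\bw^k R_1 \cong \Sp_{(n-k, 1^k)} \oplus \Sp_{(n-k+1, 1^{k-1})}$ from \Cref{ex:resk}, the decomposition $R_2 \cong \Sp_{(n)}^{\oplus 2} \oplus \Sp_{(n-1,1)}^{\oplus 2} \oplus \Sp_{(n-2,2)}$ (obtained from the $\Sym^2$-decomposition of the permutation representation $R_1$), and iterated application of the branching rule in \Cref{prop:propertiesofSpecht}(3) for tensor products $\Sp_\lambda \otimes \Sp_{(n-1,1)}$.

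The main obstacle is the combinatorial bookkeeping in this restriction step: the tensor products produced by the branching rule yield many hook- and near-hook-shaped Specht modules whose multiplicities must be tracked across the kernel and image of the Koszul strand, and one must verify which partition-tuples degenerate to $0$ under the convention $\Sp_\alpha = 0$ for non-partitions, particularly at the boundary values of $i$. Once $\Res^{\GL_n(\kk)}_{\sym_n} \SS_{(2, 1^i)}$ is in hand, subtracting $[\Tor_i^R(\kk(-2), \kk)]$ and re-indexing $i \mapsto i - 1$ delivers the stated $\sym_n$-decomposition of $\Tor_i^R(R/I, \kk)$ for $0 \leq i < n$, while the top case $i = n$ follows from the edge identification in degree $n+2$ produced above together with the zero contribution in degree $n+1$ supplied by $b = 0$.
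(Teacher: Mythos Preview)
Your overall strategy is exactly the paper's: invoke \Cref{thm:soclequadratic} to obtain $L_A=0$ and the $\sym_n$-invariant generator $F$, use \Cref{l:narrow}(5) to produce the equivariant short exact sequence $0\to I\to \m^2\to\kk(-2)\to 0$, apply \Cref{res-as-kernel}/\Cref{thm:ses} so that the long exact sequence of $\Tor$ splits into equivariant short exact sequences (for $i\le n-2$, and since $L_A=0$ also at $i=n-2$) plus the edge isomorphism at $i=n-1$, and then compute $\Tor_i^R(I,\kk)$ as the formal difference $[\Tor_i^R(\m^2,\kk)]-[\Tor_i^R(\kk(-2),\kk)]$ in the representation ring.

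The only substantive divergence is how you propose to compute $\Tor_i^R(\m^2,\kk)$ as an $\sym_n$-module. You suggest realizing $\SS_{(2,1^i)}$ inside the strand $\bw^{i+1}R_1\otimes R_1\to\bw^i R_1\otimes R_2\to\bw^{i-1}R_1\otimes R_3$ and then decomposing $R_2$ (and implicitly $R_3$) together with their tensor products against exterior powers. Two remarks: first, that strand is \emph{exact} at the middle term, so calling $\SS_{(2,1^i)}$ its ``middle cohomology'' is a misnomer---it is the kernel of the right map, equivalently the image of the left map. Second, and more to the point, this route is needlessly heavy. The paper instead uses the \emph{cokernel} presentation from \Cref{ex:resmd},
\[
\Tor_i^R(\m^2,\kk)\;\cong\;\coker\bigl(\bw^{i+2}R_1\to \bw^{i+1}R_1\otimes_\kk R_1\bigr),
\]
which only involves $R_1=\Sp_{(n)}\oplus\Sp_{(n-1,1)}$. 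A single application of the branching rule in \Cref{prop:propertiesofSpecht}(3) to $\bw^{i+1}R_1\otimes\Sp_{(n-1,1)}$ gives the decomposition of $\bw^{i+1}R_1\otimes R_1$ with no reference to $R_2$ or $R_3$; subtracting the known $\bw^{i+2}R_1=\Sp_{(n-i-2,1^{i+2})}\oplus\Sp_{(n-i-1,1^{i+1})}$ then yields $\Tor_i^R(\m^2,\kk)$ directly. This is what makes the ``combinatorial bookkeeping'' you flag as the main obstacle essentially disappear.
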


\begin{proof}
Let $I$ be so that the conclusion of  \Cref{thm:soclequadratic} holds. In particular, $I^\perp=(F)$, where $F$ is a symmetric quadratic polynomial and $L_A=0$. 
By \Cref{l:narrow}~(5) we have  a short exact sequence
\begin{equation}
\label{eq:quadratic1}
0\to I\to R_{\geqslant 2}\xrightarrow{\varphi} {\kk}(-2)\to 0
\end{equation}
and this exact sequence is $\sym_n$-equivariant, when considering the trivial $\sym_n$-action on $\kk(-2)$. By \Cref{res-as-kernel}, this sequence  induces equivariant short exact sequences as follows: 
\begin{equation}
\label{eq:sesTori}
0\to \Tor_i^R(I,{\kk})\to \Tor_i^R(\m^2, {\kk})\to \Tor_i^R({\kk}(-2), {\kk})\to 0\quad\text{if $i<n-1$}
\end{equation}
\begin{equation}
\label{ses:Torn}
0\to \Tor_{n}^R({\kk}(-2),{\kk})\to \Tor_{n-1}^R(I,{\kk})\to 0.
\end{equation}

It remains to determine the representation-theoretic structure of the modules $\Tor_i^R({{\kk}}, {\kk})$ and $\Tor_i^R(\m^2, {\kk})$ appearing in \eqref{eq:sesTori} and \eqref{ses:Torn}. 
Recall from \Cref{ex:resk} that the Koszul complex yields 
\begin{eqnarray*}
\Tor_i^R({{\kk}}, {\kk})  = \Sp_{(n-i,1^i)} \oplus  \Sp_{(n-i+1,1^{i-1})}.
\end{eqnarray*}
As in \Cref{ex:resmd}, a $\sym_n$-equivariant minimal resolution  $\mathcal{L}$ of $\m^2=R_{\geqslant 2},$ can be described by setting 
$$\mathcal L_i=\coker\left(\bw^{i+2}R_1\to \bw^{i+1} R_1 \otimes_R R_1 \right)\otimes_{\kk} R$$ 
with  differentials induced from the Koszul complex of $R$. 
Thanks to \cref{prop:propertiesofSpecht} (3) we have 
\begin{eqnarray*}
\bw^i R_1 \otimes_R R_1 &=& \left(  \Sp_{(n-i,1^i)} \oplus  \Sp_{(n-i+1,1^{i-1})} \right) \otimes_R \left( \Sp_{(n)}\oplus \Sp_{(n-1,1)} \right) \\
&=& \Sp_{(n-i+2,1^{i-2})} \oplus  \Sp_{(n-i+1,2,1^{i-3})} \oplus  {\Sp_{(n-i+1,1^{i-1})}}^3 \\
&& \oplus \ {\Sp_{(n-i, 2,1^{i-2})}}^2 \oplus {\Sp_{(n-i,1^{i})}}^3 \oplus  \Sp_{(n-i-1,2,1^{i-1})},
\end{eqnarray*}
in view of which we compute
\begin{eqnarray*}
\Tor_i^R(R_{\geqslant 2}, {\kk}) &=& \coker\left(\bw^{i+2}R_1\to \bw^{i+1} R_1 \otimes_R R_1 \right)\\
&=&  \Sp_{(n-i+1,1^{i-1})} \oplus  \Sp_{(n-i,2,1^{i-2})} \oplus   {\Sp_{(n-i,1^{i})}}^3\\
&&{\oplus \Sp_{(n-i-1, 2,1^{i-1})}}^2 \oplus {\Sp_{(n-i-1,1^{i+1})}}^2 .
\end{eqnarray*}
Substituting the above identities into equations \eqref{eq:sesTori} and \eqref{ses:Torn} yield the claim.
\end{proof}

 The goal of the next sections is to generalize the above phenomena, that we have recorded in the degree $2$ case.

\section{Principal symmetric ideals generated in degree $d$: an example}
\label{s:example}

Based on our work in the previous section, it is apparent that an essential part of proving properties of symmetric ideals that hold ``generically'' is to construct a concrete example which exhibits those properties, thus ensuring that an appropriately-defined Zariski-open subset is non-empty.  The proof of \cref{thm:soclequadratic} clearly illustrates this principle.
The main result of this section is \Cref{lem:generalexample}. Its purpose is to construct a polynomial $f$, of  degree $d \geq 2$ and in $n$ variables for $n$ sufficiently large,  which has the property that the principal symmetric ideal it generates is particularly easy to describe, while also exhibiting the characteristic properties of general principal symmetric ideals described in \Cref{introthm}.   We will then use \Cref{lem:generalexample} in later sections to prove results analogous to those in Section~\ref{sec: quadratic symmetric} when $d \geq 2$. 

Throughout the section, we work in a polynomial ring $R=\kk[x_1, \dots, x_n]$, where $n$ will need to be chosen to be large enough in order for the objects of interest to exist. To explain the construction, we need some preliminary notation and definitions. 

\begin{defn}
\label{d:admissible} Fix a partition $\lambda=(\lambda_1,\cdots,\lambda_s)$ of $d$ with $s$ parts, with $d\ge 1$. 
\begin{itemize}
\item We say that a monomial of the form $m=x_{i_1}^{\lambda_1} x_{i_2}^{\lambda_2} \cdots x_{i_s}^{\lambda_s}$, with $i_1, \cdots, i_s$ pairwise distinct, has {\it type} $\lambda$.

\item If $m$ is a monomial of degree $d$, then there exists a unique partition $\lambda\vdash d$ such that $m$ has type $\lambda$, and we write $\type(m)=\lambda$. 

\item We denote by $()$ the  empty partition with $0$ parts and set $\type(1)=()$. 

\item We say that $b\in R$ is a $\lambda$-{\it binomial} if $b=m-m'$, with $m,m'$  distinct monomials of type $\lambda$. 

\item  If $b$ is a $\lambda$-binomial as above, we set $g(b)=\gcd(m,m')$.  We say that $b$ is {\it admissible} if $g(b)$ is relatively prime with both $\frac{m}{g(b)}$ and $\frac{m'}{g(b)}$. 
\end{itemize}
\end{defn}

 Given a monomial $m$ of type $\lambda$ as above, note that the indices $i_1, \dots, i_s$ are not uniquely determined by $m$; only the (unordered) set of these indices is unique.  We usually write $m$ so that $i_k<i_{k+1}$ when $\lambda_k=\lambda_{k+1}$, but rearrangements may be needed in what follows, in order to verify certain conditions. 

\begin{ex}
Let $\lambda=(3,2,2,1)$. Set
\[
b=x_4^3x_1^2x_2^2x_3-x_5^3x_1^2x_2^2x_6\quad\text{and}\quad b'=x_1^3x_2^2x_3^2x_4-x_2^3x_1^2x_5^2x_6\,.
\]
Then  $b$ and $b'$ are $\lambda$-binomials and $g(b)=x_1^2x_2^2=g(b')$. The binomial $b$ is admissible, while $b'$ is not admissible. 
\end{ex}

\begin{defn}
 We say that a partition $\gamma$ is a {\it subpartition} of the partition $\lambda$, and we write $\gamma\subseteq \lambda$, if the multiset of parts of $\gamma$ is a submultiset of the parts of $\lambda$. If $\gamma\subseteq\lambda$, we describe $\gamma$ by indicating which parts of $\lambda$ are present in $\gamma$ as follows. 
 If $\gamma=(\gamma_1, \cdots, \gamma_t)$, then we can choose distinct integers $k_1, \cdots, k_t$ such that  $\gamma_i=\lambda_{k_i}$ for all $i$ with $1\le i\le t$.  Since the choice of such integers is not necessarily unique, we further require that these indices are chosen in order, starting with $k_1$ and ending with $k_t$ and, whenever a choice is to be made, we choose $k_i$ to be the smallest of the available choices. With these rule in place, then the indices $k_i$ are unique and we set $T(\lambda, \gamma) :=\{k_1, \dots, k_t\}$. 
\end{defn}

\begin{ex}
\label{e:div}
If $\lambda=(5,5,2,2,1)$, then the following is a list of all subpartitions $\gamma$ with $\gamma\subsetneq\lambda$:
\begin{gather*}
(5,2,2,1), \quad (5,5,2,1),\quad (5,5,2,2), \quad 
(5,5,2), \quad (5,5,1), \quad (5,2,2), \quad (5,2,1),\\ (2,2,1), \quad 
(5,5), \quad (5,2), \quad (5,1),\quad (2,2),\quad (2,1), \quad 
(5), \quad(2), \quad (1), \quad ().
\end{gather*}
With $\gamma=(5,5,2)$, there are two choices for $k_1$ with $\gamma_1=\lambda_{k_1}$, namely $k_1=1$ or $k_1=2$. Since we are supposed to choose the smallest of the choices,  we must take $k_1=1$. The only remaining choice for $k_2\ne k_1$ with $\gamma_2=\lambda_{k_2}$ is $k_2=2$. Further, there are two choices for $k_3$ with  $\gamma_3=\lambda_{k_3}$, namely $k_3=3$ or $k_3=4$. Our definition dictates $k_3=3$ and thus $T((5,5,2,2,1),(5,5,2))=\{1,2,3\}$.
\end{ex}

\begin{rem}
Let $d\ge 1$ and let  $\lambda=(\lambda_1, \dots, \lambda_s)$ be a partition with $s$ parts. Let $b$ be a $\lambda$-binomial and let $\gamma$ denote the type of $g(b)$.
Using the definition of admissible binomial, it is not hard to see that the following statements are  equivalent:
\begin{enumerate}
\item $b$ is admissible; 
\item $\gamma\subsetneq \lambda$, and there exists a choice of indices with  $i_1, \dots, i_s$ distinct and $j_1, \dots, j_s$ distinct such that  $b=x_{i_1}^{\lambda_1} \cdots x_{i_s}^{\lambda_s} - x_{j_1}^{\lambda_1} \cdots  x_{j_s}^{\lambda_s}$ and
\begin{equation}
\label{e:T}
i_\ell=j_\ell\quad\text{for all $\ell\in T$}\qquad\text{and}\qquad  \{i_\ell \, \mid \, \ell \not \in T\} \cap \{j_\ell \, \mid \, \ell \not \in T\} = \emptyset
\end{equation}
where $T=T(\l,\gamma)$. 
\end{enumerate}

Further, observe that condition (2) above can only hold if $n$ (the number of variables) is sufficiently large, and thus the existence of admissible binomials imposes restrictions on the size of $n$.  If $n$ is sufficiently large, then for any  $\gamma\subsetneq \lambda$ there exists an admissible $\lambda$-binomial $b$ with $g(b)$ of type $\gamma$. Note that such a $b$ may not be unique. 
\end{rem}

\begin{ex}
\label{e:|}
Let $\lambda=(5,5,5,2,1)$ and $\gamma=(5,5,2)$. We have $T=\{1,2,4\}$. To construct an admissible $\lambda$-binomial $b$ such that $g(b)$ has type $\gamma$, we need $n\ge 7$. Then we can choose $(i_1, i_2,i_3,i_4,i_5)=(1,2,3,4,5)$ and $(j_1, j_2,j_3,j_4,j_5)=(1,2,6,4,7)$, giving: 
\[
b=x_1^5x_2^5x_3^5x_4^2x_5-x_1^5x_2^5x_6^5x_4^2x_7. 
\]
The binomial 
\[
b'=x_1^5x_2^5x_3^5x_4^2x_5-x_1^5x_3^5x_6^5x_4^2x_7=x_1^5x_3^5x_2^5x_4^2x_5-x_1^5x_3^5x_6^5x_4^2x_7
\]
is also an example of an admissible $\lambda$-binomial $g(b')$ of type $\gamma$. Note that the multiplication order in $b'$ needs to be rearranged to the form on the right,  in order for \eqref{e:T} to be satisfied. 
\end{ex}

While there may be many choices of admissible $\lambda$-binomials $b$ with $g(b)$ of type $\gamma$ for a fixed $\gamma$, the choice is unique up to the action of $\sym_n$. The following statement is straightforward so we omit the proof. 
\begin{lem}
\label{l:orbit} 
Let $\lambda$ be a partition of $d\ge 2$, If $b$ and $b'$ are both admissible $\lambda$-binomials such that $g(b)$ and $g(b')$ have the same type, then there exists $\sigma\in \sym_n$ such that $\sigma\cdot b=b'$. 
\end{lem}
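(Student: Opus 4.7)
My plan is to use the explicit form \eqref{e:T} provided by admissibility to write both binomials in a uniform way and then construct the desired permutation by matching indices. Set $T = T(\lambda, \gamma)$ and write
\[
b = x_{i_1}^{\lambda_1}\cdots x_{i_s}^{\lambda_s} - x_{j_1}^{\lambda_1}\cdots x_{j_s}^{\lambda_s}, \qquad b' = x_{i'_1}^{\lambda_1}\cdots x_{i'_s}^{\lambda_s} - x_{j'_1}^{\lambda_1}\cdots x_{j'_s}^{\lambda_s},
\]
where the tuples $(i_1, \ldots, i_s)$, $(j_1, \ldots, j_s)$, $(i'_1, \ldots, i'_s)$, $(j'_1, \ldots, j'_s)$ each consist of pairwise distinct entries, $i_\ell = j_\ell$ and $i'_\ell = j'_\ell$ exactly for $\ell \in T$, and $\{i_\ell : \ell \notin T\} \cap \{j_\ell : \ell \notin T\} = \emptyset$ (and similarly for $b'$).

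I then define a partial map $\sigma$ on the finite subset $\{i_\ell, j_\ell : 1 \le \ell \le s\} \subseteq [n]$ by the rules $\sigma(i_\ell) := i'_\ell$ and $\sigma(j_\ell) := j'_\ell$. The potential obstacle is verifying this is well-defined and injective; this is where both admissibility hypotheses enter. For well-definedness, the only possible collision in the domain is $i_\ell = j_\ell$, which occurs precisely when $\ell \in T$; but for such $\ell$ one has $i'_\ell = j'_\ell$ as well, so the two prescriptions agree. For injectivity, suppose $\sigma(a) = \sigma(b)$ for distinct $a, b$ in the domain; by a short case analysis (either $a = i_\ell$, $b = i_{\ell'}$; or $a = j_\ell$, $b = j_{\ell'}$; or $a = i_\ell$, $b = j_{\ell'}$), each case is ruled out either by the distinctness of the primed tuples or, in the case where $\ell, \ell' \notin T$, directly by admissibility of $b'$ which guarantees $\{i'_\ell : \ell \notin T\} \cap \{j'_\ell : \ell \notin T\} = \emptyset$.

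Since both the domain and image of the partial map have cardinality $2s - |T|$, I extend $\sigma$ to a bijection $[n] \to [n]$ by choosing any bijection between the complements. By the formula for the symmetric group action on monomials, $\sigma \cdot x_{i_\ell}^{\lambda_\ell} = x_{i'_\ell}^{\lambda_\ell}$ and $\sigma \cdot x_{j_\ell}^{\lambda_\ell} = x_{j'_\ell}^{\lambda_\ell}$, so $\sigma \cdot b = b'$, finishing the proof.
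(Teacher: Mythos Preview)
Your proof is correct and is precisely the straightforward argument the paper has in mind (the paper itself omits the proof, calling the statement straightforward). One minor notational slip: you use $b$ both for the binomial and for an index variable in the injectivity argument; renaming the latter would avoid confusion, but the mathematics is sound.
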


\begin{constr}
\label{c:f} Fix an integer $d\geq 2$. We proceed to construct a homogeneous polynomial $f\in \kk[x_1, \dots, x_n]$ of degree $d$.  

For each partition $\l\vdash d$, $\l\ne (d)$ with $s$ parts and for each partition $\gamma$ with $\gamma\subsetneq \lambda$,  including $\gamma=()$, we choose an admissible  $\lambda$-binomial $b(\lambda, \gamma)$ with $g(b(\l, \gamma))$ of type $\gamma$ 
and such that for each $i$ with $1\le i\le n$, the variable $x_i$ appears in at most one of the terms of the summation \eqref{eq:large f} below. 
Note that such a collection of binomials exists, provided that the number $n$ of variables is sufficiently large. While the binomial $b(\lambda, \gamma)$ is dependent on a choice, the ideal $(b(\lambda, \gamma))_{\sym_n}$ 

We then define $f$ as follows
\begin{equation}
\label{eq:large f}
f :=x_1^d+\sum_{\lambda\vdash d, \lambda\neq (d)}\, \sum_{\gamma\subsetneq \lambda} b(\lambda, \gamma)\,.
\end{equation}
By construction, $f$ is homogeneous of degree $d$. 
Since $f$ depends on the choice of the binomials $b(\lambda, \gamma)$, it is not uniquely determined. However, in view of \cref{l:orbit} and since there is no overlap in indices between the variables of the summands,  we see that the principal symmetric ideal $(f)_{\sym_n}$ generated by $f$ is independent of the choices made in defining $f$. 
\end{constr}

\begin{ex}\label{ex:d=3}
Let $d=3$. Then the possible partitions $\lambda$ are $(3),(2,1),(1,1,1)$, with $1$ part, $2$ parts and $3$ parts, respectively.  

We list all possible of $\gamma$, $\lambda$ with $\lambda\ne (3)$,  $\gamma\subsetneq \lambda$: 
For $\lambda=(2,1)$, we have $\gamma=()$ or $\gamma=(2)$ or $\gamma=(1)$. 
For $\lambda=(1,1,1)$, we have $\gamma=()$ or $\gamma=(1)$ or $\gamma=(1,1)$. 
Thus, the summation in the formula \eqref{eq:large f} will have $7$ terms. We need to choose the $6$ binomials $b(\lambda, \gamma)$, making sure that no two such binomials share the same variables among themselves and also with $x_1^3$. 
Below is such a choice: 
\begin{equation*}
\begin{split} 
f & = x_1^3 + (x_2^2 x_3 - x_4^2 x_5) + (x_6^2 x_7 - x_6^2 x_8) + (x_9^2 x_{10} - x_{11}^2 x_{10})  \\
 & +   (x_{12} x_{13} x_{14} - x_{15} x_{16} x_{17}) + (x_{18} x_{19} x_{20} - x_{18} x_{21} x_{22})  \\
 & + (x_{23} x_{24} x_{25} - x_{23} x_{24} x_{26}) \\ 
 \end{split} 
 \end{equation*} 
where we need to assume that the ambient polynomial ring has at least $26$ variables. 
\end{ex} 

The main result of this section is to give a concrete description of the principal symmetric ideal generated by the polynomial $f$ constructed in \cref{c:f}, for $n$ sufficiently large. The terminology below will be useful for this purpose. 

\begin{defn}
\label{def:monomialsymmetric}
Although we did not define a multiplication on the dual $S=R^\vee$ defined in \cref{s:duality}, we refer to the elements of $S$ as polynomials in $y_i$.  Let $\lambda = (\lambda_1, \cdots, \lambda_s)$ be a partition of $d$ with $s$ parts. We call a basis element of the form $m=y_{i_1}^{(\lambda_1)}\cdots y_{i_s}^{(\lambda_s)}$ a monomial of type $\lambda$, and we write $\type(m)=\lambda$. 

We define the {\em monomial symmetric polynomial} is $S$ corresponding to $\lambda$ to be 
\[
m_\lambda :=\sum y_{i_1}^{(\lambda_1)}\cdots y_{i_s}^{(\lambda_s)} \in S\,,
\]
where the summation is taken over all distinct monomials of type $\lambda$. 
Observe that $m_\l$ is invariant under the $\sym_n$-action on $S$. 
\end{defn}

\begin{obs}
\label{o:ml-contract}
If $m\in R$ is a monomial of type $\lambda'$ for some $\lambda'\vdash d$, then the definition of the contraction in \eqref{eq:contraction} yields 
$$
m\circ m_\lambda =\begin{cases}
0 &\text{if $\lambda'\ne \lambda$}\\
1 &\text{if $\lambda'=\lambda$.}
\end{cases}
$$
\end{obs}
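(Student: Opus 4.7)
The plan is to unpack the contraction formula from~\eqref{eq:contraction} and observe that because both $m$ and each summand of $m_\lambda$ have total degree $d$, the nonvanishing condition $e_i - d_i \geq 0$ for all $i$ combined with $\sum e_i = d = \sum d_i$ forces exponent-vector equality, not just termwise dominance. So any nonzero contraction term is equal to $y_1^{(0)}\cdots y_n^{(0)} = 1$.

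Write $m = x_{j_1}^{\lambda'_1}\cdots x_{j_{s'}}^{\lambda'_{s'}}$ with the $j_k$ pairwise distinct, and expand
\[
m \circ m_\lambda \; =\; \sum_{(i_1,\ldots,i_s)} m \circ \bigl(y_{i_1}^{(\lambda_1)} \cdots y_{i_s}^{(\lambda_s)}\bigr),
\]
the sum taken over index tuples that produce distinct monomials of type $\lambda$. By the observation above, the summand indexed by $(i_1,\ldots,i_s)$ is $1$ precisely when the exponent vector $(\lambda_1 \mathbf{e}_{i_1} + \cdots + \lambda_s \mathbf{e}_{i_s})$ in $\N^n$ coincides with the exponent vector $(\lambda'_1 \mathbf{e}_{j_1} + \cdots + \lambda'_{s'} \mathbf{e}_{j_{s'}})$ of $m$, and is $0$ otherwise.

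If $\lambda' \neq \lambda$, then the multisets of nonzero entries in these two vectors differ (they are exactly the multisets of parts of $\lambda'$ and $\lambda$), so no summand contributes and $m \circ m_\lambda = 0$. If $\lambda' = \lambda$, then exactly one summand has the matching exponent vector: namely, the unique monomial $y_{j_1}^{(\lambda_1)} \cdots y_{j_s}^{(\lambda_s)}$ obtained by replacing each $x_{j_k}$ in $m$ with the divided-power variable $y_{j_k}^{(\lambda_k)}$. All other tuples produce distinct monomials in $S$ with different exponent vectors and contribute $0$, so $m \circ m_\lambda = 1$.

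There is essentially no obstacle here; the only point requiring care is the bookkeeping when $\lambda$ has repeated parts, where one must verify that the sum in $m_\lambda$ is understood as being over \emph{distinct} monomials in $S$ (as per Definition~\ref{def:monomialsymmetric}), so that the matching monomial appears with multiplicity exactly one.
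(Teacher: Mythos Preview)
Your proof is correct. The paper treats this statement as an \emph{Observation} and gives no proof at all, simply asserting that it follows from the contraction formula~\eqref{eq:contraction}; your argument makes explicit precisely the degree-matching step (that $\sum e_i = \sum d_i = d$ together with $e_i \ge d_i$ forces $e_i = d_i$) and the uniqueness of the matching summand when $\lambda' = \lambda$, which is the content the paper leaves to the reader.
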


We now wish to show that a polynomial $f$ as in \cref{c:f} gives rise to a corresponding symmetric ideal which can be concretely described, and moreover, its Macaulay inverse system can also be explicitly computed.  As noted earlier, for $n$ sufficiently large, polynomials $f$ as in \cref{c:f} exist, so the statement below is not vacuous.

\begin{prop}
\label{lem:generalexample}
Let $\kk$ be a field with ${\rm char}(\kk )\neq 2$. Let $d$ be an integer, $d\geq 2$.  Let $n$ be a positive integer with $n>3d$, and additionally assume $n$ is sufficiently large so that \cref{c:f} can be done. Let $f \in R = \kk[x_1,\cdots, x_n]$ as in \cref{c:f},
 and set  $I=(f)_{\sym_n}$. The following hold:
\begin{enumerate}[\quad\rm(1)]
\item The principal symmetric ideal $I$ generated by $f$ may be computed as follows: 
\begin{equation}
\label{eq:gens large f}
I=\left( x_1^d\right)_{\sym_n}+ \sum_{\lambda\vdash d, \lambda\neq (d)}\,\sum_{\gamma\subsetneq \lambda} \left(b(\lambda, \gamma)\right)_{\sym_n}.
\end{equation}
\item 
If $P(d)$ denotes the number of partitions of $d$, then
\[ 
\dim_\kk I_d= \dim_\kk R_d-(P(d)-1)\,.
\]
\item The $(-d)$-degree component of the Macaulay inverse system  of $I$ can be computed as 
\[
(I^\perp)_{-d}=\langle m_\l\mid \l\vdash d, \l\neq (d)\rangle.
\]
\end{enumerate} 
\end{prop}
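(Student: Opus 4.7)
The plan is to prove (1) first and then deduce (2) and (3) from it. The inclusion $(f)_{\sym_n}\subseteq \mathrm{RHS}$ is immediate because $f$ is a sum of the generators of the RHS and the RHS is a symmetric ideal. For the reverse inclusion it suffices to show that $x_1^d\in I$ and each $b(\lambda,\gamma)\in I$.

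For this I would employ the following \emph{sign-swap} trick. Fix $(\lambda,\gamma)$ and write $b(\lambda,\gamma)=m-m'$ with $m=x_{i_1}^{\lambda_1}\cdots x_{i_s}^{\lambda_s}$ and $m'=x_{j_1}^{\lambda_1}\cdots x_{j_s}^{\lambda_s}$ labelled so that $i_\ell=j_\ell$ for $\ell\in T(\lambda,\gamma)$ and $\{i_\ell\}_{\ell\notin T}\cap\{j_\ell\}_{\ell\notin T}=\emptyset$ as in \cref{d:admissible}. Let $\sigma\in\sym_n$ be the product of the transpositions $(i_\ell\;j_\ell)$ for $\ell\notin T$; the admissibility condition guarantees that these transpositions are pairwise disjoint and hence commute. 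A direct check gives $\sigma\cdot m=m'$, $\sigma\cdot m'=m$, and therefore $\sigma\cdot b(\lambda,\gamma)=-b(\lambda,\gamma)$. Because the summands of $f$ in \cref{c:f} were chosen to use pairwise disjoint variable sets and $\sigma$ is supported entirely on the variables of $b(\lambda,\gamma)$, the permutation $\sigma$ fixes $x_1^d$ and every other $b(\lambda',\gamma')$ pointwise. Hence $f-\sigma\cdot f=2\,b(\lambda,\gamma)\in I$, and dividing by $2$ (valid by the hypothesis $\mathrm{char}(\kk)\ne 2$) yields $b(\lambda,\gamma)\in I$. Subtracting all such binomials from $f$ then gives $x_1^d\in I$, which completes (1).

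With (1) established, $I_d$ equals the $\kk$-span of the $\sym_n$-orbits of the degree-$d$ generators, namely $I_d=\langle x_i^d:1\le i\le n\rangle + \sum_{\lambda\ne(d),\,\gamma\subsetneq\lambda}\langle \sigma\cdot b(\lambda,\gamma):\sigma\in\sym_n\rangle$. The hypothesis $n>3d$ supplies the following combinatorial input: for any two monomials $m,m'$ of the same type $\lambda\ne(d)$, one may choose a third type-$\lambda$ monomial $m''$ whose support is disjoint from $\supp(m)\cup\supp(m')$, since this union has size at most $2\,\#\lambda\le 2d$ while at least $\#\lambda$ fresh indices remain. Both $m-m''$ and $m''-m'$ are then admissible binomials with $\gcd$ of type $()$, hence lie in the $\sym_n$-orbit of $b(\lambda,())$ by \cref{l:orbit}, and so $m-m'=(m-m'')+(m''-m')\in I_d$. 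This shows that the admissible binomials of type $\lambda$ span every difference of type-$\lambda$ monomials, identifying the inner sum above with the codimension-one subspace of type-$\lambda$ polynomials whose coefficient sum vanishes. Consequently $I_d$ has codimension $P(d)-1$ in $R_d$, which is (2). Taking annihilators in $S_{-d}$ gives (3): annihilation of each $x_i^d$ forces the type-$(d)$ part of $g$ to vanish, while annihilation of the differences $m-m'$ for type-$\lambda$ monomials with $\lambda\ne(d)$ forces all coefficients of the type-$\lambda$ part of $g$ to agree, so that $g_\lambda$ must be a scalar multiple of $m_\lambda$.

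The main obstacle is the sign-swap construction of $\sigma$ in part (1), whose correctness hinges on three structural features fitting together: the disjoint-variable design of $f$ in \cref{c:f} (so that $\sigma$ fixes all other summands of $f$), the admissibility condition (so that $\sigma$ is a product of disjoint transpositions swapping $m$ and $m'$), and $\mathrm{char}(\kk)\ne 2$ (for the final division by $2$). The connectivity step in part (2) is combinatorial but relies essentially on $n>3d$ to supply enough ``fresh'' indices for the intermediate monomial $m''$.
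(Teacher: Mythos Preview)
Your proof is correct and follows essentially the same approach as the paper's: the sign-swap trick for (1), the type-by-type decomposition and connectivity argument for (2), and the annihilator computation for (3) all match. One minor simplification worth noting: in your connectivity step you choose the intermediate monomial $m''$ with support \emph{completely} disjoint from $\supp(m)\cup\supp(m')$, so that both $m-m''$ and $m''-m'$ are admissible with $\gcd$ of type $()$; the paper instead keeps the shared indices $i_\ell=j_\ell$ and only refreshes the differing ones, landing in $(b(\lambda,\gamma))_{\sym_n}$ for some possibly nontrivial $\gamma$. Your choice is cleaner and shows that already the binomials $b(\lambda,())$ suffice for the span argument. For (3) you only argue the inclusion $(I^\perp)_{-d}\subseteq\langle m_\lambda:\lambda\ne(d)\rangle$; equality then follows from the dimension count in (2), which you should state explicitly (the paper instead verifies both inclusions directly).
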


\begin{proof}
We begin with (1). Since the LHS of~\eqref{eq:gens large f} clearly lies in the RHS, it suffices to show the opposite inclusion, namely, that the RHS of~\eqref{eq:gens large f} lies in the LHS. In particular, 
it suffices to show that each summand in \eqref{eq:large f} belongs to $I$.  To see this, fix a partition $\l\vdash d$, $\l\ne (d)$ with $s$ parts and $\gamma\subsetneq \lambda$ and let 
\[
b(\lambda, \gamma)=x_{i_1}^{\lambda_1} \cdots x_{i_s}^{\lambda_s} - x_{j_1}^{\lambda_1} \cdots  x_{j_s}^{\lambda_s}\,.
\]
such that $T=T(\lambda, \gamma)$ satisfies \eqref{e:T}. In particular, $i_t=j_t$ for $t\in T$ and  $i_t\ne j_t$ for $t\notin T$. Define the permutation
\[\sigma :=\prod_{t\not \in T}(i_t,  j_t)\,.\] 
By the assumptions on $T=T(\l,\gamma)$ in \eqref{e:T}, it follows that the transpositions $(i_t,j_t)$  commute with each other, and thus the multiplication order in $\sigma$ is irrelevant. 
By construction of $f$, each variable that appears in $b(\lambda, \gamma)$ does not appear in any $b(\lambda',\gamma')$ with $(\lambda, \gamma)\ne (\lambda',\gamma')$, and it follows that $\sigma$ fixes $b(\l',\gamma')$. On the other hand, observe that 
\[
\sigma\cdot b(\l,\gamma)=-b(\l,\gamma)\,.
\]
 From this we conclude
\[
b(\l,\gamma)=\frac{1}{2}\left(f-\sigma \cdot f \right)\in I.
 \]
 Since $I$ is $\sym_n$-invariant, if one binomial is contained in $I$ then any permutation of it also lies in $I$, so we conclude $(b(\lambda,\gamma))_{\sym_n}$ is contained in $I$, as desired. Moreover, by subtracting binomials we may also conclude that $x_1^d$ and hence also any $x_i^d$ is contained in $I$, completing the proof of (1).

We now prove (2). 
Let $N_\lambda$ denote the set of all monomials in $R$ of type $\lambda$ and let $M_\lambda$ denote the $\kk$-span of $N_\lambda$. Since $R_d$ is spanned by degree $d$ monomials, and any monomial of degree $d$ is of type $\lambda$ for some partition of $d$, we have 
\begin{equation}\label{eq: Rd decomp}
R_d = \bigoplus_{\lambda\vdash d} M_\lambda. 
\end{equation}
 From (1) above, we know that $I$ can be described as a sum of ideals as in~\eqref{eq:gens large f}. The ideals in question are all homogeneous, so we may restrict to degree $d$ components. Moreover, the ideals $(b(\l,\gamma))_{\sym_n}$ appearing in the RHS of~\eqref{eq:gens large f} lie entirely in $M_\lambda$ for each $\lambda$.  So by the decomposition~\eqref{eq: Rd decomp} we obtain
\begin{equation}\label{eq:DS}
I_d= \left(\left( x_1^d\right)_{\sym_n}\right)_d \oplus  \bigoplus_{\lambda\vdash d, \lambda\neq (d)}\left( \sum_{\scriptscriptstyle\gamma\subsetneq \lambda} \left( b(\l,\gamma)\right)_{\sym_n}\right)_d.
\end{equation}
It is easy to see that $\left(\left( x_1^d\right)_{\sym_n}\right)_d= \langle x_1^d, \ldots, x_n^d \rangle=M_{(d)}$. We next claim that, for each $\l\neq (d)$, the vector space $M_\l$ decomposes as 
\begin{equation}
\label{eq:Mlambda}
M_\l=\left\langle \sum_{\scriptscriptstyle m\in N_\lambda} m \right\rangle  \oplus \left(\sum_{\scriptscriptstyle\gamma\subsetneq \lambda}\left (b(\l,\gamma)\right )_{\sym_n}\right)_d.
\end{equation}
To see this, we consider the decomposition 
\begin{equation}
\label{eq:Mlambdabis}
M_\l=\left\langle \sum_{\scriptscriptstyle m\in N_\lambda} m \right\rangle  \oplus \left \langle m-m' \mid m,m'\in N_\lambda, m\ne m'\right\rangle \, . 
\end{equation}
We want to show that the second summands in \eqref{eq:Mlambda} and \eqref{eq:Mlambdabis} are equal.  Since the inclusion 
$$
\left(\sum_{\scriptscriptstyle\gamma\subsetneq \lambda} \left( b(\l,\gamma)\right)_{\sym_n}\right)_d \subseteq
 \left\langle m-m' \mid m,m'\in N_\lambda, m\ne m' \right\rangle 
$$ 
is apparent from the definitions, it suffices to prove the reverse inclusion. 
For  distinct monomials $m, m'\in N_\l$  with 
\[
m=x_{i_1}^{\lambda_1}\cdots x_{i_s}^{\l_s}\qquad\text{and}\qquad m'=x_{j_1}^{\lambda_1}\cdots x_{j_s}^{\l_s} 
\]
set $T=\{\ell\in [n] \mid i_\ell = j_\ell\}$ and consider a subset of indices $\{k_\ell \mid \ell\not \in T\}$ so that 
\[\{k_\ell \mid \ell\not \in T\}\cap \{i_\ell \mid \ell\not \in T\} =\emptyset\qquad\text{and}\qquad \{k_\ell \mid \ell\not \in T\}\cap \{j_\ell \mid \ell\not \in T\}=\emptyset\,.
\]
It is straightforward to see that such choices of $k_\ell$ for $\ell \not \in T$ always exist provided that $n>3d$. Also,  set $k_\ell=i_\ell=j_\ell$ for $\ell\in T$.  We may write 
\[m-m' =\left(x_{i_1}^{\lambda_1}\cdots x_{i_s}^{\l_s}-x_{k_1}^{\lambda_1}\cdots x_{k_s}^{\l_s}\right) +\left( x_{k_1}^{\lambda_1}\cdots x_{k_s}^{\l_s} - x_{j_1}^{\lambda_1}\cdots x_{j_s}^{\l_s} \right)\,.\]
Let $b_1$ (respectively $b_2$) denote the first (respectively second) binomial in the above decomposition of $m-m'$. Both $b_1, b_2$ are admissible $\lambda$-binomials, with $g(b_i)$ of type $\gamma$ for $i=1,2$ for some $\gamma$ with $\gamma\subsetneq \lambda$. Use then \cref{l:orbit} to conclude that $b_1, b_2\in (b(\lambda, \gamma))_{\sym_n}$, and hence that $m-m'=b_1-b_2\in (b(\lambda, \gamma))_{\sym_n}$. This establishes the desired inclusion and proves \eqref{eq:Mlambda}. In view of (1), it also follows that
\begin{equation}
\label{e:m-m'}
m-m'\in I\qquad\text{for all}\quad m,m'\in N_\lambda, m\ne m', \lambda\vdash d, \l\ne (d)\,.
\end{equation}

Since the first term in the RHS of~\eqref{eq:Mlambda} is a $1$-dimensional subspace, we may conclude from~\eqref{eq:Mlambda} the identity 
\[
\dim_\kk\left( \sum_{\scriptscriptstyle\gamma\subsetneq \lambda} \left( b(\l,\gamma)\right)_{\sym_n}\right)_d=\dim_\kk M_\l -1.
\]

Now, computing dimensions by means of \eqref{eq:DS} and using the relation $R_d=\bigoplus_{\lambda \vdash d} M_\lambda$ yields
\[
\dim_{\kk} I_d=\dim_\kk M_{(d)}+\sum_{\l\neq(d)} \left( \dim_\kk M_{\l}-1\right)=\dim_\kk R_d-(P(d)-1).
\]
This proves (2). 

Finally, we prove (3).  To begin, consider the linear subspace of $S$ defined as follows: 
 \begin{equation}\label{eq: def W} 
 W :=\left\langle m_\l \mid \l\vdash n, \l\neq (d)\right\rangle. 
 \end{equation} 
We now claim that $W = (I^\perp)_{-d}$.

The inclusion $W \subseteq  (I^\perp)_{-d}$ can be seen as follows.  In view of~\eqref{eq:gens large f}, we need to show $x_1^d\circ m_\l=0$ and $b(\l,\gamma)\circ m_\l=0$ for all $\l, \gamma$ with $\l\vdash d$, $\l\ne (d)$ and $\gamma\subsetneq \lambda$. Both these equations follow directly from \cref{o:ml-contract}. 

Observe that $\dim_\kk W=P(d)-1$. In view of \eqref{eq:GIperp} and part (2), we also have 
\[
\dim_\kk (I^\perp)_{-d}=\dim_\kk R_d-\dim_\kk I_d=P(d)-1
\]
and hence $W=(I^\perp)_{-d}$. This completes the proof. 
\end{proof}

By definition of the symmetric polynomials $m_\lambda$ and \cref{l:actionS}, the group $\sym_n$ acts trivially on each $m_\lambda$. The following corollary is therefore immediate from the above proposition.

\begin{cor}
Under the assumptions and notation of Proposition~\ref{lem:generalexample}, the natural $\sym_n$-action on $S$ restricts to be trivial on $(I^\perp)_{-d}$.
\end{cor}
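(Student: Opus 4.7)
The plan is to derive the corollary directly from part (3) of Proposition~\ref{lem:generalexample}, which identifies the degree $-d$ component $(I^\perp)_{-d}$ as the $\kk$-span of the monomial symmetric polynomials $m_\lambda$ for $\lambda\vdash d$ with $\lambda\ne (d)$. Once one verifies that each generator $m_\lambda$ is fixed pointwise by $\sym_n$, the spanning statement forces $(I^\perp)_{-d}$ itself to be pointwise fixed. This is essentially all that is needed; the statement records the fact that the ``new'' generators of the inverse system produced in degree $-d$ by the proposition are symmetric, while the lower graded pieces $S_{\geq -(d-1)}$ sit in $I^\perp$ automatically (because $I$ is generated in degree $d$) and do not require further analysis for the intended content of the corollary.

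To carry out the verification, fix a partition $\lambda=(\lambda_1,\ldots,\lambda_s)\vdash d$ and a permutation $\sigma\in\sym_n$. By \Cref{l:actionS}(1), $\sigma$ acts on a basis monomial of type $\lambda$ via
\[
\sigma\cdot\bigl(y_{i_1}^{(\lambda_1)}\cdots y_{i_s}^{(\lambda_s)}\bigr)= y_{\sigma(i_1)}^{(\lambda_1)}\cdots y_{\sigma(i_s)}^{(\lambda_s)}.
\]
The right-hand side is again a monomial of type $\lambda$, since the multiset of exponents is unchanged. Because $\sigma$ is a bijection of $[n]$, the induced map on $s$-tuples of pairwise distinct indices is a bijection, so $\sigma$ permutes the indexing set of distinct monomials of type $\lambda$ bijectively. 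Since $m_\lambda$ is defined as the sum over this entire set, it is fixed by $\sigma$. As $\sigma$ was arbitrary, each $m_\lambda$ is $\sym_n$-invariant, and combining with Proposition~\ref{lem:generalexample}(3) yields the corollary.

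There is no real obstacle: the argument is a brief bookkeeping check once the explicit description of $(I^\perp)_{-d}$ from Proposition~\ref{lem:generalexample}(3) is in hand, together with the fact that the action on $S$ permutes the $y$-variables by the same rule as on $R$ (\Cref{l:actionS}(1)). The only conceptual point worth flagging in the write-up is the distinction between the generating degree $-d$ and the automatic lower-degree part $S_{\geq -(d-1)}\subseteq I^\perp$; the substantive content of the corollary is triviality on the former.
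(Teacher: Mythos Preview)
Your proof is correct and follows the same approach as the paper: both arguments observe that each $m_\lambda$ is $\sym_n$-invariant (via \Cref{l:actionS}) and then invoke Proposition~\ref{lem:generalexample}(3). Your explicit remark that the substantive content concerns $(I^\perp)_{-d}$, with the lower-degree piece $S_{\geqslant -(d-1)}$ being automatic, is a useful clarification that the paper leaves implicit.
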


\section{Linear relations of the inverse system}\label{sec: linear relations}

In the previous section, we gave an explicit description of the Macaulay inverse system of a special type of principal symmetric ideal $(f)_{\sym_n}$, where $f$ is constructed in a very specific manner. The main goal of the current section is to use the analysis in Section~\ref{s:example} to compute the space of linear relations on a subspace $W(\underline{t})$ (to be defined precisely below) of the Macaulay inverse system for a more general class of polynomials $f$. The dimension computations in this section then lead directly to our main results in Section~\ref{sec:main}.

In this section, we let $R=\kk[x_1, \dots, x_n]$ with $\kk$ a field, and $S$ denotes the dual $S=R^\vee$, as in \cref{s:duality}.

\begin{notation}
\label{n:W}
Let $d\ge 1$. We introduce scalar parameters $t_\lambda \in \kk$ for each $\lambda \vdash d, \lambda \neq (d)$. 
We assemble these scalars to define the notation $\underline{t} := (t_\lambda)_{\lambda \vdash d, \lambda \neq (d)}$. Next, we define a subspace of $S$ associated to $\underline{t}$ as follows: 
\begin{equation}
\label{eq:W(t)}
W(\underline{t}) :=\langle m_\l -t_\l m_{(d)} \, \mid \, \l\vdash d,  \l\neq (d)\rangle \, . 
\end{equation}

We associate to a homogeneous polynomial $f \in R$ of degree $d$, a list of parameters $\underline{t}=(t_\lambda)_{\lambda \vdash d, \lambda \neq (d)}$ as follows.  For each partition $\l$ of $d$, let $\alpha_\l$ denote the sum of all coefficients of monomials of type $\l$ in the support of $f$. Assume $\alpha_{(d)}\neq 0$. Under this hypothesis, we define, for each $\l\vdash d$, the scalar $t_\l :=\alpha_\l/\alpha_{(d)}$. 
\end{notation}

We start with a lemma which identifies the resulting vector space $W(\underline{t})$ as a subspace of the Macaulay inverse system of $(f)_{\sym_n}$.

\begin{lem}
\label{lem:W(t)}
Let $f \in R_d$ and let $(f)_{\sym_n}$ denote the corresponding principal symmetric ideal. Let $\alpha_\l$ be defined from $f$ as above and assume that $\alpha_{(d)} \neq 0$. Let $\underline{t}$ be constructed from $f$ as in \cref{n:W}. Then the vector space  $W(\underline{t})$ defined by~\eqref{eq:W(t)} satisfies
\[W(\underline{t})\subseteq \left((f)_{\sym_n}\right)^\perp_{-d}\,.
\]
\end{lem}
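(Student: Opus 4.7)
The plan is to reduce the membership $W(\underline{t})\subseteq ((f)_{\sym_n})^\perp_{-d}$ to a single scalar identity that is forced by the definition of $t_\lambda$. Concretely, it suffices to show that each generator $h_\lambda := m_\lambda - t_\lambda m_{(d)}$ (for $\lambda\vdash d$, $\lambda\ne (d)$) is annihilated by the contraction of every element of $(f)_{\sym_n}$. Since $h_\lambda$ sits in internal degree $-d$ and any $r\in R_k$ with $k>d$ satisfies $r\circ h_\lambda \in S_{k-d}=0$, and since $(f)_{\sym_n}$ has no elements in degrees strictly smaller than $d$, we only need to check annihilation in degree $d$. As $((f)_{\sym_n})_d = \mathrm{span}_\kk\{\sigma\cdot f : \sigma\in \sym_n\}$, the task reduces to verifying
\[
(\sigma\cdot f)\circ h_\lambda = 0\qquad\text{for all }\sigma\in \sym_n.
\]

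The first key step is to remove the $\sigma$. By Lemma~\ref{l:actionS}(1), each monomial symmetric polynomial $m_\mu\in S$ is fixed by the $\sym_n$-action, so $h_\lambda$ is $\sym_n$-invariant. Using that $S\in \mod_{\sym_n}R$ together with Lemma~\ref{l:actionS}(2) (applied in total degree $0$, where the action is trivial), one obtains
\[
(\sigma\cdot f)\circ h_\lambda \;=\; (\sigma\cdot f)\circ(\sigma\cdot h_\lambda) \;=\; \sigma\cdot(f\circ h_\lambda) \;=\; f\circ h_\lambda.
\]
Thus it suffices to prove $f\circ h_\lambda = 0$.

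The second step is the scalar computation. Write $f = \sum_{\mu \vdash d} f_\mu$, where $f_\mu$ is the sum of those terms of $f$ whose supporting monomial has type $\mu$. Observation~\ref{o:ml-contract} says that a monomial $m$ of type $\mu$ contracts against $m_\lambda$ to $1$ if $\mu=\lambda$ and to $0$ otherwise. Applying this termwise and using linearity of $\circ$ in its first argument gives
\[
f\circ m_\lambda \;=\; \sum_{\mu\vdash d} f_\mu \circ m_\lambda \;=\; \alpha_\lambda,
\]
and similarly $f\circ m_{(d)} = \alpha_{(d)}$. Therefore
\[
f\circ h_\lambda \;=\; \alpha_\lambda - t_\lambda\,\alpha_{(d)} \;=\; \alpha_\lambda - \frac{\alpha_\lambda}{\alpha_{(d)}}\,\alpha_{(d)} \;=\; 0,
\]
where the second equality uses the hypothesis $\alpha_{(d)}\ne 0$ and the definition $t_\lambda := \alpha_\lambda/\alpha_{(d)}$ from Notation~\ref{n:W}. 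This finishes the verification for each generator $h_\lambda$, so by linearity $W(\underline{t})\subseteq((f)_{\sym_n})^\perp_{-d}$.

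There is no real obstacle here beyond bookkeeping: the symmetry of $m_\lambda$ under $\sym_n$ (which removes the need to consider each $\sigma\cdot f$ separately) and Observation~\ref{o:ml-contract} (which evaluates the contractions combinatorially) do all of the work. The only place one must be a little careful is in invoking Lemma~\ref{l:actionS}(2): it is precisely the total-degree-$0$ case where the $\sym_n$-action becomes trivial and allows us to cancel the $\sigma$.
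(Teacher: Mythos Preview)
Your proof is correct and follows essentially the same approach as the paper's: both use Observation~\ref{o:ml-contract} to compute $f\circ m_\lambda=\alpha_\lambda$, the $\sym_n$-invariance of $m_\lambda$ to replace $h_\lambda$ by $\sigma\cdot h_\lambda$, and the fact that $S\in\mod_{\sym_n}R$ to pull $\sigma$ outside the contraction. The only cosmetic differences are that you do the symmetry reduction before the scalar computation (the paper does it after), and you spell out the degree argument explaining why it suffices to check annihilation by the degree-$d$ generators $\sigma\cdot f$, which the paper leaves implicit.
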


\begin{proof}
Let $\mathcal M_d$ denote the set of all monomials of degree $d$ in $R$. If $m\in \mathcal M_d$, let $a_m$ denote the coefficient of  $m$ in $f$, so that $f=\sum_{m\in \mathcal M_d} a_mm$. By \cref{o:ml-contract}, we have
$$
f \circ m_\lambda= \sum_{m\in \mathcal M_d\,,\type(m)=\lambda} a_m= \alpha_\lambda
$$
for any $\lambda\vdash d$. Thus, if $\lambda\ne (d)$, we have
\begin{equation}
\label{e:f}
f\circ (m_\lambda - t_\lambda m_{(d)}) = \alpha_\lambda-t_\lambda \alpha_{(d)}=0\,,
\end{equation}
since we  defined $t_\lambda = \frac{\alpha_\lambda}{\alpha_{(d)}}$. 

Next, let $\sigma\in \sym_n$. We claim that $(\sigma\cdot f)\circ  (m_\lambda - t_\lambda m_{(d)})=0$ when $\lambda\ne (d)$. Indeed, we have:
\[
(\sigma \cdot f) \circ (m_\lambda - t_\lambda m_{(d)}) = (\sigma \cdot f) \circ \left(\sigma\cdot (m_\lambda - t_\lambda m_{(d)})\right)= \sigma\cdot (f \circ (m_\lambda-t_\lambda m_{(d)}) = \sigma\cdot 0=0\,.
\]
In the first equality, we used the fact that $m_\lambda$ and $m_{(d)}$ are symmetric, so that they are invariant under the action of $\sigma$. In the second equality, we used the fact that $S\in \mod_{\sym_n}(R)$, as shown in \cref{rem:MGRproperties}(4). In the third equality we used \eqref{e:f}. 

The inclusion $W(\underline{t}) \subseteq (f)^\perp_{\sym_n}$ now follows, in view of the definition of $W(\underline{t})$. 
\end{proof}

It will turn out -- as a consequence of results we obtain in later sections (cf. \Cref{cor:I perp equal W(t)}) -- that, for a general (in a suitable sense) principal symmetric ideal, the containment in \Cref{lem:W(t)} is in fact an equality.

\subsection{Linear relations on $W$}

Recall from \eqref{eq:Lses} that for polynomials $F_1, \ldots, F_r \in S$, the vector space $L_{F_1, \dots, F_r}$ of linear relations on the given polynomials is defined by means of the following exact sequence of $\kk$-vector spaces: 
\begin{equation}
\label{eq:Lsesreprieve}
0\to L_{F_1, \dots, F_a}\hookrightarrow{R_1}^a\xrightarrow{\psi} R_1\circ (F_1, \dots, F_a)_{-d}\to 0
\end{equation}
where $\psi(\ell_1, \ell_2, \dots, \ell_r) :=\sum_{i=1}^r \ell_i\circ F_i$. Here we view the vector space ${R_1}^r$ as a direct sum of $r$ many copies of the standard representation of $\sym_n$. 
If $W$ denotes the vector space spanned by $F_1, \dots, F_a$ we will also write $L_W$ instead of $L_{F_1, \dots, F_a}$.  We begin with the following.

\begin{lem}
\label{lem:psi}
Let $F_1, \ldots, F_r\in S$ and suppose $F_i$ is $\sym_n$-invariant for all $i, 1 \leq i \leq r$.  
Then the map $\psi$ in  \eqref{eq:Lsesreprieve} is $\sym_n$-equivariant and the subspace $L_{F_1, \dots, F_r} \subseteq R_1^r$ is stable under the $\sym_n$-action.
\end{lem}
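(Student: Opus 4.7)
The plan is to verify equivariance of $\psi$ by a direct computation using that $S$ is an object of $\MG R$, and then deduce stability of $L_{F_1,\dots,F_r}$ formally from the fact that the kernel of an equivariant linear map is a subrepresentation.

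First, I would recall that the $\sym_n$-action on $R_1^r$ is the diagonal action, so $\sigma\cdot(\ell_1,\dots,\ell_r)=(\sigma\cdot\ell_1,\dots,\sigma\cdot\ell_r)$, and that the target $R_1\circ (F_1,\dots,F_r)_{-d}\subseteq S_{-d+1}$ inherits the $\sym_n$-action from $S$. The key input, recorded in \cref{rem:MGRproperties}(4) (and implicit in the fact that the contraction map $R\otimes S\to S$ realizes $S$ in $\MG R$), is that for any $\sigma\in\sym_n$, $\ell\in R_1$, and $F\in S$, one has the compatibility
\[
\sigma\cdot(\ell\circ F)=(\sigma\cdot\ell)\circ(\sigma\cdot F).
\]

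Second, I would compute for $\sigma\in\sym_n$ and $(\ell_1,\dots,\ell_r)\in R_1^r$:
\[
\psi(\sigma\cdot(\ell_1,\dots,\ell_r))=\sum_{i=1}^r (\sigma\cdot\ell_i)\circ F_i=\sum_{i=1}^r (\sigma\cdot\ell_i)\circ(\sigma\cdot F_i)=\sum_{i=1}^r\sigma\cdot(\ell_i\circ F_i)=\sigma\cdot\psi(\ell_1,\dots,\ell_r),
\]
where the second equality uses the hypothesis that each $F_i$ is $\sym_n$-invariant, i.e.\ $F_i=\sigma\cdot F_i$, and the third uses the compatibility above. This establishes that $\psi$ is $\sym_n$-equivariant.

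Finally, since $L_{F_1,\dots,F_r}=\ker\psi$ is the kernel of an $\sym_n$-equivariant linear map between $\sym_n$-representations, it is automatically a subrepresentation of $R_1^r$, hence stable under the $\sym_n$-action. There is no real obstacle here; the only subtlety is being precise about which compatibility of the $\sym_n$-action with the contraction is being invoked, so I would make sure to point explicitly to the place in the paper where $S\in\MG R$ is recorded.
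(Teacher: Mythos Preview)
Your proof is correct and matches the paper's own argument almost verbatim: both verify equivariance by the same chain of equalities, invoking the compatibility $\sigma\cdot(\ell\circ F)=(\sigma\cdot\ell)\circ(\sigma\cdot F)$ from \cref{rem:MGRproperties}(4) and the $\sym_n$-invariance of the $F_i$, and then observe that the kernel of an equivariant map is stable. The only cosmetic difference is the direction of the computation (you start from $\psi(\sigma\cdot -)$ whereas the paper starts from $\sigma\cdot\psi(-)$).
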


\begin{proof}
Let $\sigma\in \sym_n$. We compute 
 \begin{align*}
\sigma\cdot \left(\psi(\ell_1, \ell_2, \dots, \ell_r)\right)&=\sum_{i=1}^r \sigma\cdot (\ell_i\circ F_i)=\sum_{i=1}^r (\sigma\cdot \ell_i)\circ (\sigma \cdot F_i) =   \sum_{i=1}^r (\sigma\cdot \ell_i)\circ F_i \\&=\psi\left(\sigma\cdot (\ell_1, \ell_2, \dots, \ell_r)\right)\,.
\end{align*}
where we use Remark~\ref{rem:MGRproperties}(4) in the second equality and the $\sym_n$-invariance of the $F_i$ in the third equality.  This shows that $\psi$ is equivariant and consequently its kernel, $L_{F_1, \dots, F_r}$ is stable under the $\sym_n$-action, as claimed. 
\end{proof}

\begin{rem}
The hypothesis that  $F_i$ are invariant polynomials is necessary in Lemma~\ref{lem:psi}. 
To see this, consider the $n-1$ polynomials $F_{i}=y_i^{(d)}-y_1^{(d)}$ for each $i>1$.  Note that these $F_i$ are not $\sym_n$-invariant. The corresponding $\psi$ then takes an $n-1$-tuple $(f_2,\cdots,f_n) \in R_1^{n-1}$ to $\psi(f_2,\cdots,f_n) = \sum_{i=2}^n f_i \circ F_i$, and it is straightforward to compute that $\psi(x_1,\cdots,x_1) = -(n-1)y_1^{(d-1)}$ whereas $\psi(x_2,\cdots,x_2) = y_2^{(d-1)}$. Taking $\sigma$ to be the simple transposition $(12)$ which swaps $1$ and $2$, we have $\sigma(x_1)=x_2$ and $\sigma \cdot \psi(x_1) \neq \psi(\sigma \cdot x_1)$. Thus $\psi$ is not $\sym_n$-equivariant in this example. 
\end{rem}

 Lemma~\ref{lem:psi} applies in particular to the special case where $F_\l=m_\l$ are the monomial symmetric polynomials in $S_{-d}$ for $\lambda \vdash n, \lambda \neq (d)$. There are $P(d)-1$ many such monomial symmetric functions. In~\eqref{eq: def W} we denoted by $W$  the subspace spanned by them. Applying Lemma~\ref{lem:psi} with $r=P(d)-1$, it follows that  the space of linear relations $L_W$ is a $\sym_n$-representation. Our next goal is to determine this action of $\sym_n$ on $L_{W}$; the answer is given in Proposition~\ref{prop:alternatesymmetry} below.

To prove Proposition~\ref{prop:alternatesymmetry}, we begin with a definition of a total order on the set of partitions of a fixed integer $d$. 
  
 \begin{defn}\label{def: lex order on partitions}
 Fix a positive integer $d$. 
 We define the {\em lexicographic (total) order}, denoted $>_{\lex}$, on the set of partitions of $d$ as follows: for $p=(p_1,p_2,\cdots,p_s)$ and $q = (q_1,\cdots,q_t)$ any two distinct partitions of $d$, we define 
\[p=(p_1, p_2, ..., p_s)>_{\lex} q=(q_1, q_2, ..., q_t)\]
 if there exists an integer $k \geq 1$ such that 
 \[
 p_1=q_1, p_2=q_2, \cdots , p_k=q_k, \, \text{ and } \, p_{k+1}>q_{k+1}. 
 \]
 \end{defn}

The following technical lemma is used in the sequel.

\begin{lem}
\label{lem:xi-xj} 
Let $F\in S_{-d}$ is a nonzero symmetric polynomial that does not contain $y_k^{(d)}$ in its support for any $k$. Then for every pair $i, j \in [n]$ with $i\neq j$, we have $(x_i-x_j)\circ F\neq 0$.
\end{lem}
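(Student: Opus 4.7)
The plan is to argue by contradiction: assume $(x_i - x_j) \circ F = 0$ for some $i \neq j$ and derive $F = 0$. First, we reduce to the pair $(i,j) = (1,2)$. Choosing $\sigma \in \sym_n$ with $\sigma(1) = i$ and $\sigma(2) = j$, the $\sym_n$-invariance of $F$ combined with the $\sym_n$-equivariance of the contraction action on $S$ (cf.~\Cref{l:actionS} and \Cref{rem:MGRproperties}(4)) yields
\[
(x_i - x_j) \circ F \;=\; (\sigma \cdot (x_1 - x_2)) \circ (\sigma \cdot F) \;=\; \sigma \cdot \bigl((x_1 - x_2) \circ F\bigr),
\]
so vanishing for one pair is equivalent to vanishing for any other. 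Since $F$ is symmetric, we expand it in the basis of monomial symmetric polynomials as $F = \sum_{\lambda \vdash d} c_\lambda \, m_\lambda$; the hypothesis that no $y_k^{(d)}$ appears in $F$ forces $c_{(d)} = 0$.

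Next, we extract linear relations among the $c_\lambda$'s from $x_1 \circ F = x_2 \circ F$. For each tuple $(e_1, \ldots, e_n) \in \N^n$ with $\sum_i e_i = d-1$, comparing the coefficient of the divided-power monomial $y_1^{(e_1)} \cdots y_n^{(e_n)}$ on both sides produces the identity $c_{\mu'} = c_{\mu''}$, where $\mu'$ (respectively $\mu''$) is the partition obtained by sorting the nonzero entries of $(e_1+1, e_2, e_3, \ldots, e_n)$ (respectively $(e_1, e_2+1, e_3, \ldots, e_n)$).

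We then propagate $c_{(d)} = 0$ to $c_\lambda = 0$ for every $\lambda \vdash d$ by induction on the lexicographic order $>_{\lex}$ of \Cref{def: lex order on partitions}, for which $(d)$ is the maximum. For the inductive step at $\lambda = (\lambda_1, \ldots, \lambda_s)$ with $s \geq 2$, we take $(e_1, e_2) = (\lambda_s - 1, \lambda_1)$ and let $(e_3, \ldots, e_n)$ realize the subpartition $(\lambda_2, \ldots, \lambda_{s-1})$ (with zero entries elsewhere); this is possible since $m_\lambda \neq 0$ in $S$ forces $s \leq n$. A direct check gives $\mu' = \lambda$, while $\mu''$ has $\lambda_1 + 1$ as its largest part (each other entry being at most $\lambda_1$), so $\mu'' >_{\lex} \lambda$ and the inductive hypothesis yields $c_{\mu''} = 0$; the relation $c_{\mu'} = c_{\mu''}$ then forces $c_\lambda = 0$. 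The main delicate point is pairing the induction parameter with the right box-transfer rule: moving one box from the smallest part $\lambda_s$ to the largest part $\lambda_1$ always strictly increases the leading entry of $\lambda$, which is exactly what drives the induction in $>_{\lex}$.
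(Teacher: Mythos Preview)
Your proof is correct and follows essentially the same approach as the paper. Both arguments hinge on the lexicographic order on partitions and the observation that transferring a box to the largest part of a partition produces a strictly lex-larger partition; the paper packages this as a one-step contradiction at the lex-maximal $\mu$ in the support of $F$, while you run the equivalent downward induction from $(d)$, but the underlying mechanism is identical.
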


\begin{proof}
The monomials $m_\l$ with $\l\vdash d$ form a basis for the subspace of symmetric polynomials in $S_{-d}$. Using also the assumption on the support of $F$, we can write
\[F=\sum_{\l\vdash d, \l\ne (d)} a_\l m_\l\qquad\text{with}\quad a_\lambda\in \kk\,.
\]
Define $\mu :=\max_{\lex}\{\l\vdash d \mid a_\l\neq 0\}$ where the maximum is taken with respect to the lex order $>_{\lex}$ of Definition~\ref{def: lex order on partitions} and note that by our assumption we know $\mu<(d)$. Hence, $\mu$ is a partition with at least $2$ parts. 

For the purposes of the proof, we say that the monomial $y_1^{(e_1)}y_2^{(e_2)}\dots y_n^{(e_n)}$ is divisible by $y_1^{(e_1')}y_2^{(e_2')}\dots y_n^{(e_n')}$ if  $e_i\ge e_i'$ for all $i\in [n]$.

Let $i,j \in [n]$ with $i \neq j$. We wish to show $(x_i - x_j) \circ F \neq 0$. Suppose in order to obtain a contradiction that $(x_i - x_j) \circ F = 0$, i.e., $x_i \circ F = x_j \circ F$. By the symmetry of $F$ and because $m_\mu$ appears in $F$ (i.e. $a_\mu\neq 0$), we know that there is a monomial in $F$ divisible by $y_i^{(\mu_1)}y_j^{(\mu_2)}$. From this we see that $x_j \circ F$ contains a monomial divisible by $y_i^{(\mu_1)}y_j^{(\mu_2-1)}$. By assumption $x_i \circ F = x_j \circ F$, so $x_i \circ F$ also contains such a monomial, from which it follows that there must appear in $F$ a monomial which is divisible by $y_i^{(\mu_1+1)}y_j^{(\mu_2-1)}$. But such a monomial has a type $\lambda$ with the property that $\lambda_1 > \mu_1$, which contradicts the choice of $\mu$ as the maximal in lex order. Thus we achieve a contradiction and the claim is proved. 
\end{proof} 

We can now compute the $\sym_n$-action on the space of linear relations $L_W$.  By definition, $L_W$ is a subspace of $R_1^{\oplus P(d)-1}$, the direct sum of $P(d)-1$ many copies of $R_1$, that is equal to the kernel of the map $\psi$ from \eqref{eq:Lsesreprieve}, which, in this case is  described by 
\[
\psi((\ell_{\lambda})_{\lambda\vdash d, \lambda\ne (d)})=\sum_{\lambda\vdash d, \lambda\ne (d)} \ell_\lambda\circ m_\lambda(y_1, \cdots, y_n)\,.
\]
 In what follows,  $(\sum_{i=1}^n x_i) R_0$ denotes the vector subspace of $R_1$ spanned by the linear form $\sum_{i=1}^nx_i$. With this notation in place, we can state and prove the following.

\begin{prop}
\label{prop:alternatesymmetry}
Let $W$ be as defined in~\eqref{eq: def W} and let $L_W$ denote the kernel of $\psi$ as above. 
Then $L_W\subseteq \left( (\sum_{i=1}^n x_i) R_0 \right)^{\oplus P(d)-1} \subseteq R_1^{\oplus P(d)-1}$. 
In particular, $L_{W}$ is a trivial $\sym_n$-representation. 
\end{prop}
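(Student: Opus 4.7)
The plan is to exploit the $\sym_n$-stability of $L_W$ together with Lemma~\ref{lem:xi-xj}, essentially reducing the claim to a pointwise statement about transpositions.

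First, I would fix an arbitrary element $(\ell_\lambda)_{\lambda \vdash d, \lambda \neq (d)} \in L_W$ and write each linear form in coordinates as $\ell_\lambda = \sum_{k=1}^n c_{\lambda,k}\, x_k$. Since every $m_\lambda$ (for $\lambda\neq (d)$) is $\sym_n$-invariant, Lemma~\ref{lem:psi} tells us that $L_W$ is stable under the diagonal $\sym_n$-action on $R_1^{\oplus P(d)-1}$. Hence for any transposition $(i\,j)\in \sym_n$, the difference
\[
(i\,j)\cdot (\ell_\lambda)_\lambda - (\ell_\lambda)_\lambda \;=\; \bigl((c_{\lambda,j}-c_{\lambda,i})(x_i-x_j)\bigr)_{\lambda}
\]
remains in $L_W$.

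Next I would factor out the common $(x_i-x_j)$. Applying the defining equation of $L_W$ to this difference gives
\[
(x_i-x_j)\circ F = 0, \qquad \text{where }\; F := \sum_{\lambda\vdash d,\,\lambda\neq (d)} (c_{\lambda,j}-c_{\lambda,i})\,m_\lambda \in S_{-d}.
\]
Crucially, $F$ is a symmetric polynomial whose support avoids every $y_k^{(d)}$, precisely because the partition $(d)$ is excluded from the indexing set. This is the exact hypothesis of Lemma~\ref{lem:xi-xj}, which then forces $F=0$. Linear independence of the monomial symmetric polynomials $\{m_\lambda\}_{\lambda\vdash d,\,\lambda\neq (d)}$ yields $c_{\lambda,i}=c_{\lambda,j}$ for every $\lambda$ and every pair $i\neq j$.

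Consequently each $\ell_\lambda$ is a scalar multiple of $\sum_{k=1}^n x_k$, establishing the inclusion $L_W \subseteq \bigl((\sum_i x_i)R_0\bigr)^{\oplus P(d)-1}$. Triviality of the $\sym_n$-action follows immediately, as $\sum_i x_i$ is fixed by every permutation. The main step to handle carefully is the invocation of Lemma~\ref{lem:xi-xj}; the entire argument pivots on the exclusion of $\lambda = (d)$, since otherwise the polynomial $F$ could contain the problematic $y_k^{(d)}$ terms and no conclusion about its vanishing would be available.
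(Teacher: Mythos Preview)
Your proof is correct and follows essentially the same approach as the paper's own argument. The only cosmetic difference is that you form the difference $(i\,j)\cdot\ell - \ell$ first and then apply $\psi$, whereas the paper applies $\psi$ to $\ell$ and to $(i\,j)\cdot\ell$ separately and subtracts; both routes lead to the identical equation $(x_i-x_j)\circ F=0$ and the same invocation of Lemma~\ref{lem:xi-xj}.
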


\begin{proof}
Any element in $L_W \subseteq R_1^{\oplus P(d)-1}$ is a $(P(d)-1)$-tuple of linear polynomials, so for $\ell \in L_W$, we may write 
\[
\ell=\left(\sum_{i=1}^n c_{i,\l}x_i\right)_{\l\vdash d, \l\neq (d)}\in L_W\qquad\text{for}\quad c_{i,\l} \in \kk\,.
\]
To prove the first claim, it would suffice to show that $c_{i,\l}=c_{j,\l}$ for all $1\leq i,j\leq n$.

To see this, recall from Lemma~\ref{lem:psi} that $L_W$ is stable under the $\sym_n$-action. Thus for any $i,j \in [n]$, $i \neq j$, we have that $(i\, j) \cdot \ell\in L_W$. In other words, $\psi(\ell) = 0$ and $\psi((i\, j) \cdot \ell)=0$. We now compute each explicitly. 

We have 
\begin{equation}
\label{eq: psi ell}
\psi(\ell)=\sum_{\l\vdash d , \l \neq(d)}\left(\sum_{k=1}^n c_{k,\l}x_k\circ m_\l\right)=0.
\end{equation}
We may also compute 
\begin{equation} 
\label{eq: psi ij ell}
\psi((i \, j)\cdot \ell)=
\sum_{\l\vdash d , \l \neq(d)}\left( \left( \sum_{\scriptscriptstyle k\neq i,j } c_{k,\l}x_k\circ m_\l \right) +c_{j,\l}x_i\circ m_\l+c_{i,\l}x_j\circ m_\l\right) = 0.
\end{equation} 
Subtracting \eqref{eq: psi ij ell} from \eqref{eq: psi ell}, we obtain 
\[
\sum_{\l\vdash d , \l \neq(d)}(c_{i,\l}-c_{j,\l})(x_i-x_j)\circ m_\l=0\,,
\]
which is equivalent to 
\[
(x_i-x_j)\circ\left(\sum_{\scriptscriptstyle\l\vdash d , \l \neq(d)}(c_{i,\l}-c_{j,\l}) m_\l\right)=0\,.
\]
Note that the polynomial being contracted by $x_i-x_j$ above is a symmetric polynomial that does not contain $x_i^d$ in its support. It follows by \Cref{lem:xi-xj} that this must be the zero polynomial. This implies that $c_{i,\lambda}=c_{j,\lambda}$ for all $\lambda \vdash d, \lambda \neq (d)$. Moreover, the above argument is valid for any pair of $i,j \in [n]$ with $i \neq j$, so the claim is proved. Finally, the linear polynomial $\sum_{i=1}^n x_i$ is $\sym_n$-invariant, so the $\kk$-vector space it spans is the trivial $\sym_n$-representation, as is its direct sum $\left( (\sum_{i=1}^n x_i) R_0 \right)^{\oplus P(d)-1}$. Any subspace of a trivial representation is still trivial, so we may conclude $L_W$ is also the trivial $\sym_n$-representation. This concludes the proof. 
\end{proof}

\subsection{Linear relations on $W(\underline{t})$}

We now turn our attention to the general case of the subspace $W(\underline{t})$ from \eqref{eq:W(t)}. The subspace $W$ studied in the previous subsection is the special case when $t_\lambda=0$ for all $\lambda$.   It will turn out that our analysis of the linear relations $L_W$ in this special case will allow us to compute the dimension of $L_{W(\underline{t})}$ for $\underline{t} = (t_\lambda)\in \mathbb{A}^{P(d)-1}$ general (in a suitable sense to be made precise in Proposition~\ref{prop: general Aprime}), and in addition, it will allow us to show that for such $\underline{t}$, the subspace $L_{W(\underline{t})}$ carries a trivial $\sym_n$-action -- giving us a  natural generalization of \Cref{prop:alternatesymmetry}.  These statements are contained in Corollary~\ref{cor:L_A}. 

We first need some notation.  For an arbitrary tuple of non-negative integers $\alpha=(\alpha_1, \ldots, \alpha_s) \in \Z_{\geq 0}^n$ we denote by $\pa(\alpha)$ the tuple with the same entries as $\alpha$ but with entries re-ordered to be non-increasing, so that $\pa(\alpha)$ is a partition. 

\begin{notation}
For a partition $p=(p_1,\ldots, p_s)$ denote by $\#p$ the length of the partition, so $\#p = s$ in the example given.
For any $i$ with $1 \leq i \leq \#p+1$, define the notation 
\[
p_{\uparrow i} := 
\begin{cases}
\pa(p_1, \ldots, p_{i-1}, p_i+1, p_{i+1}, \ldots, p_s) & \text{ if }1\leq i\leq \#p=s \\
(p_1, \ldots, p_s, 1) & \text{ if }\, i=\#p+1 = s+1. 
\end{cases}
\]
In particular, $p_{\uparrow i}$ is undefined for any $i$ with $i \not \in [\#p+1]$. 
\end{notation}

\begin{ex}
Assume $d>4$. Let $p=(d-3, 1,1)$.  Then  
\[
p_{\uparrow 1}= (d-2, 1,1), \quad p_{\uparrow 2}= p_{\uparrow 3}= (d-3, 2,1), \quad  p_{\uparrow 4}=(d-3, 1,1,1).
\]
\end{ex}

Our second piece of notation involves the support of vectors in $\Z_{\geq 0}^n$. Let $\alpha\in \Z_{\geq 0}^n$. Recall that the {\em support} of $\alpha$ is the set 
\[
\Supp(\alpha)=\{i \mid \alpha_i\neq 0\} \subseteq [n]. 
\]
We also denote by $e_1, \ldots, e_n$ the standard basis of $\Z_{\geq 0}^n$.

\begin{defn}
\label{def:diff}
Let $\alpha\in \Z_{\geq 0}^n$ with $\pa(\alpha)=p$.  We define a function 
\[
\diff_\alpha:\Supp(\alpha)\to \{1,\ldots, \#p\}
\]
such that  if $i \in \Supp(\alpha)$, then $\diff_\alpha(i)$ is the unique index  $1\leq \diff_\alpha(i)\leq \#p$ such that 
\[ \pa(\alpha)_{\diff_\alpha(i)}\neq \pa(\alpha+e_i)_{\diff_\alpha(i)}.
\]
\end{defn}

\begin{ex}
Let $\alpha=(1, d-2, 1, 0^{n-3})$. Then $\Supp(\alpha)=\{1,2,3\}$. Assume $d>3$. Then $\pa(\alpha)=(d-2, 1,1)$ and we have $\alpha+e_1 = (2,d-2,1,0^{n-3})$ and $\pa(\alpha+e_1)=(d-2,2,1)$ so $\diff_\alpha(1) = 2$. Similarly $\alpha+e_2=(1,d-1,1,0^{n-3})$ so $\pa(\alpha+e_2)=(d-1,1,1)$ and $\diff_\alpha(2)=1$. A similar computation yields $\diff_\alpha(3)=2$. 
\end{ex}

The following is a straightforward consequence of definitions and is left to the reader. 

\begin{lem}\label{lem: uparrow p}
Let $\alpha \in \Z_{\geq 0}^n$, and let $\pa(\alpha)=p$. 
\begin{enumerate} [\quad\rm(1)]
\item For $i \not \in \Supp(\alpha)$, we have $\pa(\alpha+e_i) = p_{\uparrow \#p+1}$. 
\item For $i \in \Supp(\alpha)$, we have $\pa(\alpha+e_i) = p_{\uparrow \diff_\alpha(i)}$. 
\item The multisets $\{p_{\uparrow \diff_\alpha(i)} \, \mid \, i \in \Supp(\alpha)\}$ and $\{p_{\uparrow i} \, \mid \, 1 \leq i \leq \#p\}$ are equal. 
\end{enumerate} 
\end{lem}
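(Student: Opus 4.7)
The plan is to establish parts (1) and (2) by direct case analysis unpacking the definitions of $\pa$ and $p_{\uparrow j}$, and then derive part (3) as a consequence of a more detailed understanding of how $\diff_\alpha$ behaves on the level sets of the value map $i \mapsto \alpha_i$.

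For part (1), I would observe that if $i \notin \Supp(\alpha)$, then $\alpha_i = 0$, so forming $\alpha + e_i$ introduces a single new nonzero entry equal to $1$ into the multiset of nonzero entries of $\alpha$. Since $p = \pa(\alpha)$ lists those nonzero entries in non-increasing order and every part of $p$ is at least $1$, the new entry $1$ sorts to the end, giving $\pa(\alpha + e_i) = (p_1, \ldots, p_{\#p}, 1)$, which by definition is $p_{\uparrow \#p+1}$.

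For part (2), I would fix $i \in \Supp(\alpha)$ with $\alpha_i = v \geq 1$ and set $j := \min\{k : p_k = v\}$; this minimum exists because $v$ appears among the parts of $p$. Let $k$ be the number of indices $\ell$ with $p_\ell = v$, so that $p_j = p_{j+1} = \cdots = p_{j+k-1} = v$ and (if $j > 1$) $p_{j-1} \geq v+1$. When we form $\alpha + e_i$, exactly one of the values $v$ in $\alpha$ is replaced by $v+1$. In the sorted tuple, this new value $v+1$ must sit at position $j$ (since $p_{j-1} \geq v+1$), while the remaining $k-1$ copies of $v$ fill positions $j+1, \ldots, j+k-1$. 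Hence only position $j$ is altered, so $\diff_\alpha(i) = j$ and $\pa(\alpha + e_i) = p_{\uparrow j} = p_{\uparrow \diff_\alpha(i)}$ by the definition of $p_{\uparrow j}$.

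For part (3), the crucial observation from the argument for (2) is that $\diff_\alpha(i)$ depends only on the value $\alpha_i$ and not on $i$ itself: namely, $\diff_\alpha(i) = \min\{k : p_k = \alpha_i\}$. Thus for each maximal run of equal parts $p_j = \cdots = p_{j+k-1} = v$ in $p$, there are exactly $k$ indices $i \in \Supp(\alpha)$ with $\alpha_i = v$, each contributing the same partition $p_{\uparrow j}$ to the left-hand multiset. On the right-hand side, increasing any one of $p_j, p_{j+1}, \ldots, p_{j+k-1}$ by $1$ and re-sorting yields the same partition, so $p_{\uparrow j} = p_{\uparrow j+1} = \cdots = p_{\uparrow j+k-1}$, contributing $k$ copies of $p_{\uparrow j}$ as well. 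Summing over the distinct values $v$ appearing in $\alpha$ yields the multiset equality. The only mildly delicate point — and the main piece of bookkeeping in the whole proof — is tracking how $\diff_\alpha$ collapses indices with equal $\alpha_i$-values to a single index, a collapse that is exactly compensated on the right by the coincidences $p_{\uparrow j} = p_{\uparrow j'}$ within each run of equal parts.
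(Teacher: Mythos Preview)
Your proof is correct and complete. The paper itself omits the argument entirely, stating only that the lemma ``is a straightforward consequence of definitions and is left to the reader,'' so your careful unpacking of how re-sorting after incrementing a single entry affects exactly one position---and your observation that $\diff_\alpha(i)$ depends only on the value $\alpha_i$, matched on the other side by the coincidences $p_{\uparrow j}=p_{\uparrow j'}$ within a run of equal parts---fills in precisely what the authors leave implicit.
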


We may also succinctly express the action of a monomial on elements of $W$ using this notation.

\begin{lem}
\label{lem:actbyalpha}
 Let $p$ be a partition of $d-1$. Let $\alpha\in \Z_{\geq 0}^n$ with $\pa(\alpha)=p$. For each $i \in [n]$ and $\l \vdash d$, let $c_{i,\l}$ be an element of $\kk$.  Then
\begin{equation}
\label{eq:support}
x^\alpha \circ \left(\sum_{\scriptscriptstyle,\lambda\vdash d} c_{i,\lambda}x_i\circ m_\lambda\right)= \sum_{i\notin \Supp(\alpha)} c_{i, p_{\uparrow \#p+1}}+\sum_{i\in \Supp(\alpha)} c_{i, p_{\uparrow \diff_\alpha(i)}}.
\end{equation}
\end{lem}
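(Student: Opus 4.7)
The plan is to reduce the identity to a direct computation using the definition of the contraction action together with the definition of the monomial symmetric polynomial $m_\lambda$, and then appeal to \Cref{lem: uparrow p} to repackage the resulting sum.

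First I would use that the contraction action makes $S$ into an $R$-module, so
\[
x^\alpha \circ (x_i \circ m_\lambda) = (x^\alpha x_i)\circ m_\lambda = x^{\alpha + e_i}\circ m_\lambda.
\]
Since $|\alpha|=d-1$, we have $|\alpha+e_i|=d=|\lambda|$, and the defining formula \eqref{eq:contraction} for contraction gives, for each monomial $y^{(\mu)}$ appearing in $m_\lambda=\sum_{\type(y^{(\mu)})=\lambda} y^{(\mu)}$, the value $x^{\alpha+e_i}\circ y^{(\mu)}=1$ if $\mu=\alpha+e_i$ and $0$ otherwise. Summing over the monomials of $m_\lambda$, exactly one of them coincides with $y^{(\alpha+e_i)}$ precisely when $\pa(\alpha+e_i)=\lambda$. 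Therefore
\[
x^{\alpha+e_i}\circ m_\lambda =
\begin{cases} 1 & \text{if } \pa(\alpha+e_i)=\lambda,\\ 0 & \text{otherwise.}\end{cases}
\]

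Substituting back into the double sum collapses it to a single index sum over $i$:
\[
x^\alpha \circ \left(\sum_{i,\,\lambda\vdash d} c_{i,\lambda}\, x_i\circ m_\lambda\right) = \sum_{i=1}^n c_{i,\,\pa(\alpha+e_i)}.
\]
Finally, I would split this sum according to whether $i\in \Supp(\alpha)$ or $i\notin\Supp(\alpha)$ and invoke parts (1) and (2) of \Cref{lem: uparrow p} to identify $\pa(\alpha+e_i)$ with $p_{\uparrow\#p+1}$ in the former case and with $p_{\uparrow\diff_\alpha(i)}$ in the latter case, which produces exactly the right-hand side of \eqref{eq:support}.

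This argument is essentially a bookkeeping exercise; there is no serious obstacle. The only subtlety worth being careful about is the degree matching $|\alpha+e_i|=|\lambda|=d$, which forces $x^{\alpha+e_i}\circ m_\lambda$ to land in $S_0=\kk$ and pick out the indicator of $\pa(\alpha+e_i)=\lambda$.
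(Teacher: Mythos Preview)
Your argument is correct and follows essentially the same route as the paper's own proof: both reduce to the observation (the paper's \Cref{o:ml-contract}) that $x^{\alpha+e_i}\circ m_\lambda$ equals $1$ when $\pa(\alpha+e_i)=\lambda$ and $0$ otherwise, then split on whether $i\in\Supp(\alpha)$ and apply \Cref{lem: uparrow p}. One small slip: in your final sentence you have swapped ``former'' and ``latter'' --- it is for $i\notin\Supp(\alpha)$ that $\pa(\alpha+e_i)=p_{\uparrow\#p+1}$ (part (1)), and for $i\in\Supp(\alpha)$ that $\pa(\alpha+e_i)=p_{\uparrow\diff_\alpha(i)}$ (part (2)).
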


\begin{proof}
For any $\beta \in \Z_{\geq 0}^n$ and corresponding monomial $x^{\beta} := x_1^{\beta_1} x_2^{\beta_2} \cdots x_n^{\beta_n}$ in $R = \kk[x_1,\cdots,x_n]$, use \cref{o:ml-contract} to see that $x^\beta \circ m_\lambda$ is non-zero if and only if $\type(x^\beta)=\lambda$, and this occurs if and only if $\pa(\beta)=\lambda$. Moreover, if $x^\beta \circ m_\lambda\ne 0$, then $x^\beta \circ m_\lambda=1$. 

From the above paragraph, we see that in order to compute the LHS (and show that it is equal to the RHS) of~\eqref{eq:support} it suffices to find conditions on $\alpha$ such that $x^\alpha x_i$ has type $\lambda$.  Notice that in order for this to occur, it is necessary that $\pa(\alpha)$ be a partition of $\lvert \lambda \rvert -1 = d-1$ (as we have assumed). We take cases according as to whether $x_i$ appears in $x^\alpha$, i.e., whether or not $i$ is in $\Supp(\alpha)$. 

Suppose $i\notin \Supp(\alpha)$. Then $\pa(\alpha+e_i)=p_{\uparrow \#p+1}$ and thus 
\[
x^\alpha x_i\circ m_\l= 
\begin{cases}
1 &\text{ if and only if } p_{\uparrow \#p+1}=\l \\
0 &\text{ else}.
\end{cases}
\]

If $i\in \Supp(\alpha)$ then $\pa(\alpha+e_i)=p_{\uparrow \diff_\alpha(i)}$ by Lemma~\ref{lem: uparrow p}(2) and thus 
\[
x^\alpha x_i\circ m_\l= 
\begin{cases}
1 &\text{ if and only if } p_{\uparrow \diff_\alpha(i)}=\l \\
0 &\text{ else}.
\end{cases}
\]
The claim of the lemma now follows. 
\end{proof}

Recall that $L_{W(\underline{t})}$ is defined to be the kernel of the map in \eqref{eq:Lsesreprieve}, given by $$\psi: (\ell_\lambda)_{\lambda \vdash d, \lambda \neq (d)} \in R_1^{\oplus P(d)-1} \mapsto \sum_{\lambda \vdash d, \lambda \neq (d)} \ell_\lambda \circ (m_\l - t_\l m_{(d)}).$$
Writing $\ell_\lambda = \sum_{i=1}^n c_{i,\l} x_i$ for each $\l$, we then see that $L_{W(\underline{t})}$ consists of tuples $(\sum_{i=1}^n c_{i,\l}x_i)_{\l\neq(d)}$ where the $(c_{i,\lambda})$ must satisfy the equation 
\begin{equation}\label{eq: general linear relation with c}
\sum_{i, \lambda \vdash d, \lambda \neq (d)} c_{i,\lambda} x_i \circ (m_\lambda - t_\lambda m_{(d)}) = 0. 
\end{equation} 
Note that the LHS of~\eqref{eq: general linear relation with c} is a homogeneous polynomial of degree $d-1$. Given an element $g \in S$, we have that $x^\alpha \circ g$ is the coefficient in $g$ of the monomial with exponent vector $\alpha$, for any such $\alpha$.  Thus~\eqref{eq: general linear relation with c} is equivalent to the statement that 
\begin{equation}\label{eq: general linear relation with x alpha}
x^\alpha \circ \left( \sum_{\scriptscriptstyle i, \lambda \vdash d, \lambda \neq (d)} c_{i,\lambda} x_i \circ (m_\lambda - t_\lambda m_{(d)}) \right) = 0 \, \textup{ for all } \, \alpha \in \N^n \, \textup{ with } \, \lvert \alpha \rvert = d-1.
\end{equation}

For a given $\alpha$, let $q$ be defined by $q := \pa(\alpha)$. Note that if $q \neq (d-1)$, then $x^\alpha  x_i$ is not of the form $x_i^d$ for any $i \in [n]$, so $x^\alpha \circ (x_i \circ m_{(d)})=0$ in that case. Hence if $\pa(\alpha)=q \neq (d-1)$ the equation~\eqref{eq: general linear relation with x alpha} is equivalent to 
$$
x^\alpha \circ \left( \sum_{\scriptscriptstyle i, \lambda \vdash d, \lambda \neq (d)} c_{i,\lambda} x_i \circ m_\lambda \right) = 0 
$$
which in turn is equivalent to 
\begin{equation}\label{eq: D most q}
 \sum_{i \not \in \Supp(\alpha)} c_{i, q_{\uparrow \#q+1} } + \sum_{i \in \Supp(\alpha)} c_{i, q_{\uparrow \diff_\alpha(i)}} = 0 
\end{equation}
by \Cref{lem:actbyalpha}. 
 On the other hand, when $\pa(\alpha)=q=(d-1)$, which means $x^\alpha = x_k^{d-1}$ for some $k \in [n]$, then~\eqref{eq: general linear relation with x alpha} is equivalent to the equations 
\begin{equation}\label{eq: D last q} 
\sum_{1 \leq i \leq n, i \neq k} c_{i, (d-1,1)} - \sum_{\lambda \vdash d, \lambda \neq (d)} c_{k,\lambda} t_\lambda = 0.
\end{equation} 
Note that the equations \eqref{eq: D last q} are the only ones in which the parameters $t_\lambda$ appear.

One of the goals of this section is 
to find the dimension of the space of solutions $\{c_{i,\lambda}\}$ of the equations~\eqref{eq: D most q} and~\eqref{eq: D last q}, which computes the dimension of $L_{W(\underline t)}$.  For the convenience of the reader, we now give an overall sketch of our strategy before handling the details. First, we introduce some notation. We let $D(\underline{t})$ denote the dimension of the space of solutions of the system of equations~\eqref{eq: D most q} and~\eqref{eq: D last q}, and note that $D(\underline t)=\dim_\kk L_{W(\underline t)}$. We denote by $C(\underline{t})$  the dimension of the space of solutions to~\eqref{eq: D most q} and~\eqref{eq: D last q} under the additional condition that the solution $\{c_{i,\lambda}\}$ must be symmetric in the $i$, that is, $c_{i,\lambda}=c_{j,\l}$ for all $1\leq i,j\leq n$. We also let $D_{generic}$, respectively $C_{generic}$, denote the minimum value attained by $D(\underline t)$, respectively $C(\underline{t})$ for $\underline t\in \mathbb A^{P(d)-1}$. The reason for the use of the word ``generic'' is that the entries of coefficient matrices of the respective systems of equations are linear functions in the parameters $\underline t\in \mathbb A^{P(d)-1}$, and thus these matrices attain maximal rank (and hence the solution spaces of the corresponding systems have minimal dimension) when $\underline t$ is in a non-empty open subset of $\mathbb A^{P(d)-1}$. We also set $C_0=C(0)$ and $D_0=D(0)$; these are the dimensions of the two solutions sets that are obtained when $\underline t=0$, meaning $t_\l=0$ for all $\l\vdash d$, $\l\ne (d)$. Our proof can now be summarized through the following steps:
\begin{itemize}
\item In \Cref{prop:alternatesymmetry} we proved that if $\underline t=0$, then any solution to~\eqref{eq: D most q} and~\eqref{eq: D last q} must satisfy that the $c_{i,\lambda}$ are independent of $i$. This implies $C_0=D_0$.
\item Observe that $C(\underline{t}) \leq D(\underline{t})$ for all $\underline t\in \mathbb A^{P(d)-1}$, since the system corresponding to $C(\underline t)$ is obtained from the system corresponding to $D(\underline t)$ by adding additional linear equations that symmetrize the solutions. We have thus inequalities
\begin{align}
\label{eq:cd}
C(\underline t)&\leq D(\underline t)\le D_0=C_0\\
\label{eq:inequalities}
C_{generic} &\leq D_{generic} \leq D_0 = C_0.
\end{align}
\item We explicitly compute $C_{generic}$ and we prove $C_{generic}=C_0$. This implies that equalities must hold in \eqref{eq:inequalities}. More precisely, \eqref{eq:cd} gives that if $V$ is the nonempty open subset $V\subseteq \mathbb A^{P(d)-1}$ on which $C(\underline t)=C_0$, then 
$D(\underline t)=C(\underline t)=C_0$ for all $\underline t\in V$.  A direct consequence of the equality $D(\underline t)=C(\underline t)$ is that, when $\underline t\in V$, any solution $\{c_{i,\lambda}\}$ for the equations~\eqref{eq: D most q} and~\eqref{eq: D last q} satisfies $c_{i,\lambda}=c_{j,\lambda}$ for all $i,j\in [n]$; this is equivalent to the fact that $L_{W(\underline{t})}$ is a subspace of $ \left(\sum_{i=1}^n x_i\right) R_0^{P(d)-1}$. 
\end{itemize}

To fill in the details of the sketch-of-proof outlined above -- namely, to compute $C_{generic}$ and prove that $C_0 = C_{generic}$ -- we need some preliminaries. We will first analyze and rewrite the equations~\eqref{eq: D most q} and~\eqref{eq: D last q} under the additional assumption that $c_{i,\lambda}$ is independent of $i$. In this case, we may use the simplified notation $c_\lambda := c_{i,\lambda}$. Moreover, by Lemma~\ref{lem: uparrow p}(3), we know (using the $c_\lambda$ notation just introduced) 
$$
\sum_{i \in \Supp(\alpha)} c_{q_{\uparrow \diff_\alpha(i)}} = \sum_{j=1}^{\# q} c_{q_{\uparrow j}}.
$$
Therefore, under the symmetry assumption on the coefficients, the equation~\eqref{eq: D most q} becomes 
\begin{equation}\label{eq: Eq final} 
 (n- \#q) c_{q_{\uparrow \#q+1} } + \sum_{j=1}^{\#q}  c_{q_{\uparrow j}} = 0 
\end{equation} 
for each $q \vdash (d-1)$ with $q \neq (d-1)$. 
For $q=(d-1)$, the equation~\eqref{eq: D last q} becomes  
\begin{equation} \label{Eqlast}
\left(n-1-t_{(d-1,1)}\right)c_{(d-1,1)}- \sum_{\lambda\vdash d, \lambda\ne (d), (d-1,1)}t_{\lambda}c_{\lambda}=0. 
\end{equation} 
As suggested above, we now view these equations as linear equations in the variables $c_\l$, and the coefficients of these linear equations depend on the parameters $\underline{t} \in \mathbb{A}^{P(d)-1}$. 

We illustrate the corresponding matrix $A$ of coefficients associated to the above equations~\eqref{eq: Eq final} and~\eqref{Eqlast} in a specific instance of $d$ below. Indeed, for this specific case, we can also compute the (generic) rank of $A$, and the techniques used in the example below illustrates the general argument we give below in \Cref{prop: general Aprime}.  Our reasoning utilizes the lexicographic order in \Cref{def: lex order on partitions}.

\begin{ex}\label{ex:d=5} 
Assume ${\rm char}(\kk)=0$ and assume  $n\geq d=5$. In this case, there are 6 partitions $\lambda$ of $d=5$ with $\l \neq (5)$, namely 
$$
\{(1,1,1,1,1), (2,1,1,1), (2,2,1), (3,1,1), (3,2), (4,1)\}
$$
listed in increasing lex order. There are $4$ partitions $q$ of $d-1=4$ with $q \neq (d-1)=(4)$: 
$$
\{(1,1,1,1), (2,1,1), (2,2), (3,1)\}
$$ 
listed again in increasing lex order. Thus by varying $q \vdash 4, q \neq (4)$ in~\eqref{eq: Eq final} above, we obtain the following 4 equations: 
\begin{align*}
&(n-4)c_{(1,1,1,1,1)}+4c_{(2,1,1,1)}=0\\
&(n-3)c_{(2,1,1,1)}+2c_{(2,2,1)}+c_{(3,1,1)}=0\\
&(n-2)c_{(2,2,1)}+2c_{(3,2)}=0\\
&(n-2)c_{(3,1,1)}+c_{(3,2)}+c_{(4,1)}=0.
\end{align*}
Finally, taking $q=(4)$ we rearrange~\eqref{Eqlast} so the coefficients $c_\lambda$ appear in increasing lex order and we obtain
$$
- t_{(1,1,1,1,1)}c_{(1,1,1,1),1}-t_{(2,1,1,1)}c_{(2,1,1,1)}-t_{(2,2,1)}c_{(2,2,1)}-t_{(3,1,1)}c_{(3,1,1)}-t_{(3,2)}c_{(3,2)}+(n-1-t_{(4,1)})c_{(4,1)}=0. 
$$
The corresponding matrix $A$, with columns indexed by $\lambda \vdash 5$ with $\lambda \neq (5)$ listed in increasing lex order, and rows indexed by $q \vdash (4)$ in increasing lex order, is as follows:  
\[
A = \begin{bmatrix}
(n-4) & 4 &0&0&0&0\\
0& (n-3) & 2 &1 &0&0 \\\\
0& 0& (n-2)& 0 &2 &0\\\\
0& 0&0 & (n-2) & 1 &1\\
- t_{(1,1,1,1,1)} & -t_{(2,1,1,1)} &  -t_{(2,2,1)} & -t_{(3,1,1)}  &-t_{(3,2)}&(n-1-t_{(4,1)})
\end{bmatrix}. 
\]
We now argue that for a generic choice of $(t_\lambda)$, this $5 \times 6$ matrix has full rank. In order to see this, we consider the $5 \times 5$ minor of $A$ obtained by deleting the column corresponding to partitions $\lambda \vdash (5)$ that do not end with a $1$. Since partitions of $\lambda \vdash (5)$ that end with a $1$ (e.g. $(2,2,1)$) are in bijective correspondence with partitions $q$ of $(4)$ (e.g. $(2,2)$) by deleting the last ``$1$'' entry, it is straightforward to see that the resulting minor is indeed square, of size $5 \times 5 = P(4) \times P(4)$. It is also straightforward to see that this construction holds for general $d>4$, resulting in a $P(d-1) \times P(d-1)$ matrix. In this case of $d=5$, the resulting minor is: 
\begin{equation}\label{eq: Aprime}
A' = \begin{bmatrix}
(n-4) & 4 &0&0&0\\\\
0& (n-3) & 2 &1&0 \\
0& 0& (n-2)& 0 &0\\\\
0& 0&0 & (n-2) &1\\
- t_{(1,1,1,1,1)} & -t_{(2,1,1,1)} &  -t_{(2,2,1)} & -t_{(3,1,1)}  &(n-1-t_{(4,1)})\\
\end{bmatrix}. 
\end{equation}

We see that $A'$ is almost upper-triangular, in the sense that any entry below the diagonal and above the bottom row are equal to $0$. Since $n=d \geq 5$ we also know that the first $4$ entries along the main diagonal are positive. It follows, by performing basic row operations which clear the leftmost $4$ entries along the bottom row (using the first $4$ entries along the diagonal), that 
\begin{equation}\label{eq:det}
\det(A') = (n-4)(n-3)(n-2)^2 \cdot g(\underline{t})
\end{equation}
where $g(\underline{t})$ is a polynomial in the $t_\lambda$. In fact, using the standard definition of a determinant as a sum over permutations $\sigma$ in $S_5$ of the products $(A')_{1,\sigma(1)}(A')_{2,\sigma(2)} \cdots (A')_{5,\sigma(5)}$ of matrix entries, that in fact the polynomial $g(\underline{t})$ must be linear in the variables $t_\lambda$. Moreover, $g(\underline{t})$ is not identically $0$, since it is evident from~\eqref{eq: Aprime} that $g(0) = n-1$. Thus, for any tuple $(t_\lambda) \in \mathbb{A}^5 \setminus \mathbb{V}(g)$ where $\mathbb{V}(g)$ denotes the vanishing locus of the non-zero linear polynomial $g(\underline{t})$, we have $\det(A') \neq 0$ and hence the matrix $A$ is full rank, as desired. 
In the terminology of the discussion above, the non-empty Zariski-open subset $V$ with $C(\underline{t})$ minimal is precisely $V := \mathbb{A}^5 \setminus \mathbb{V}(g)$.

From the fact that $A$ is full rank for $(t_\lambda) \in V$, and the fact that $A$ is of size $P(4) \times (P(5)-1)$, we conclude that for such $\underline{t} = (t_\lambda)$, we have $C(\underline{t}) = C_{generic} = P(5) - P(4)-1$. 
\end{ex}

The above example clearly illustrates the idea of the general argument, which we formalize with the next result. Equation \eqref{eq:det} also illustrates the necessity of the assumption on the characteristic of the base field, if we wish to avoid situations in which the determinant of $A'$ (or $A$) is identically $0$.

\begin{prop}\label{prop: general Aprime} 
Suppose ${\rm char}(\kk )=0$ and fix integers $n \geq d \geq 2$. Let $A$ denote the $P(d-1) \times (P(d)-1)$ matrix of coefficients, which depends on $(t_\lambda)$, corresponding to the linear equations~\eqref{eq: Eq final} and~\eqref{Eqlast}.  Then there exists a non-empty Zariski-open subset $V \subseteq \mathbb{A}^{P(d)-1}$ such that for $(t_\lambda) \in V$, the matrix $A$ has full rank. Moreover, 
the origin $(t_\lambda) = (0)$ is contained in $V$, and 
$$
C_0 = C_{generic} = P(d)-P(d-1)-1.
$$
\end{prop}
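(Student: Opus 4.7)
The plan is to exhibit a specific $P(d-1)\times P(d-1)$ submatrix $A'$ of $A$ whose determinant, viewed as a polynomial in $\underline{t}$, is not identically zero and in fact does not vanish at the origin. I will obtain $A'$ by retaining the columns of $A$ indexed by partitions $\lambda\vdash d$ that end in a $1$. Via the bijection $q\leftrightarrow q\cup\{1\}$ between partitions of $d-1$ and partitions of $d$ ending in $1$, the columns of $A'$ are in natural correspondence with its rows; I will order both by increasing lex order on the corresponding partition of $d-1$.

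The central claim is that the top $(P(d-1)-1)\times(P(d-1)-1)$ block of $A'$, indexed by $q\neq (d-1)$, is upper triangular with diagonal entries $n-\#q\neq 0$. The diagonal entry in row $q$ arises from the term $(n-\#q)\,c_{q_{\uparrow \#q+1}}$ in equation \eqref{eq: Eq final}, while the remaining nonzero contributions in that row are of the form $c_{q_{\uparrow j}}$ for $1\leq j\leq \#q$. When $q_{\uparrow j}$ ends in $1$ (so it appears as a column of $A'$), I need to verify $q_{\uparrow j}>_{\lex} q\cup\{1\}$. Since dominance refines lex order on partitions of a fixed integer, it suffices to show $q_{\uparrow j}$ strictly dominates $q\cup\{1\}$: incrementing a single part of $q$ can only increase partial sums, so $\sum_{i=1}^{k}(q_{\uparrow j})_i\geq \sum_{i=1}^{k}q_i=\sum_{i=1}^{k}(q\cup\{1\})_i$ for $k\leq \#q$, with strict inequality $d>d-1$ at $k=\#q$.

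With upper triangularity in hand, setting $\underline{t}=0$ collapses the last row of $A'$ to $(0,\ldots,0,n-1)$ by \eqref{Eqlast}, and cofactor expansion along that row yields
\[
\det(A'|_{\underline{t}=0})=(n-1)\prod_{q\vdash d-1,\, q\neq (d-1)}(n-\#q),
\]
which is nonzero because $n\geq d>\#q$ for each $q$ in the product, $n\geq 2$, and $\mathrm{char}\,\kk=0$. Only the last row of $A'$ depends on $\underline{t}$ and it does so linearly, so $\det(A')$ is an affine polynomial in the $t_\lambda$ whose constant term is nonzero. Therefore the open set $V\subseteq \A^{P(d)-1}$ where $\det(A')\neq 0$ is non-empty and contains the origin. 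For $\underline{t}\in V$, the matrix $A$ attains its maximal rank $P(d-1)$, so $c(\underline{t})=(P(d)-1)-P(d-1)=P(d)-P(d-1)-1$. Specializing at $\underline{t}=0\in V$ yields $c_0=c_{generic}=P(d)-P(d-1)-1$, completing the proposition.

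The main obstacle I anticipate is the combinatorial verification that $q_{\uparrow j}>_{\lex} q\cup\{1\}$ (and hence the upper triangularity of the top block of $A'$); once this is granted, the rest reduces to a cofactor expansion and standard rank semicontinuity.
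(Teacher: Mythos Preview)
Your proof is correct and follows essentially the same approach as the paper: both extract the $P(d-1)\times P(d-1)$ minor $A'$ indexed by partitions ending in $1$, verify upper triangularity of the top block via the lex comparison $q_{\uparrow j}>_{\lex} q\cup\{1\}$, and evaluate the determinant at $\underline t=0$ to get $(n-1)\prod_{q\neq(d-1)}(n-\#q)\neq 0$. One minor wording slip: you write ``dominance refines lex order,'' but the implication you actually use (and correctly justify) is the reverse---strict dominance implies strict lex---i.e., lex refines dominance.
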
 

\begin{proof} 
The general argument follows closely the argument given for $d=5$ in the example above. Let $A'$ denote the $P(d-1) \times P(d-1)$ minor of $A$ obtained by deleting the columns corresponding to $\lambda \vdash (d)$ that do not end with a $1$. This results in a $P(d-1) \times P(d-1)$ minor since the partitions $\lambda \vdash d$ that do end with a $1$ are in bijective correspondence with partitions $q \vdash d-1$, by deleting the last $1$. In particular, each such $\lambda \vdash d$ can be written as $\lambda = q_{\uparrow \#q+1}$ for a unique $q \vdash d-1$. Note also that this bijective correspondence respects the lexicographic  order, as can be checked. 

We next claim that $A'$ is almost-upper-triangular in the sense that the $(q, \lambda = q'_{\uparrow \# q' + 1})$-th entry of $A'$ equals $0$ if $q>q'$ and $q \neq (d-1)$. In other words, $A'$ is upper-triangular except for the bottom row. To see this, suppose $q \vdash d-1$ and $q \neq (d-1)$. Then by the form of equation~\eqref{eq: Eq final} it follows that the coefficient $A_{q,\lambda}$ is non-zero if and only if either 
$$
\lambda = q'_{\uparrow \# q' +1} = q_{\uparrow \#q+1}
$$
or 
$$
\lambda = q'_{\uparrow \# q' +1} = q_{\uparrow \diff_q(i)}
$$ 
for $1 \leq i \leq \#q$, and $q_{\uparrow \diff_q(i)}$ ends with a $1$. In the first case, we have $q=q'$.  In the second case, by definition $q_{\uparrow \diff_q(i)}$ is a partition obtained from $q$ by increasing one of its entries by $1$ and then re-ordering to make it a partition. In particular, by the definition of lex order it follows that $q_{\uparrow \diff_q(i)} > q_{\uparrow \#q+1}$ and since $q'$ is defined by $q_{\uparrow \diff_q(i)} = q'_{\uparrow \#q'+1}$ it follows that $q'>q$. Thus we have shown that $A'$ is upper-triangular except for the bottom row, as claimed. Moreover, since $n \geq d$ we also know from~\eqref{Eqlast} that the diagonal entries $A_{q, q_{\uparrow \#q+1}}$ for $q \vdash (d-1)$ and $q \neq (d-1)$ are non-zero, since $n-\#q > 0$ for all such $q$. 

Now, from the form of~\eqref{Eqlast}, it immediately follows that every entry in the bottom row of $A'$ (which corresponds to $q=(d-1)$) is non-zero, and each non-diagonal entry is a single variable $t_\lambda$.  Moreover, each $t_\lambda$ which appears along the row appears exactly once. As argued above for in \Cref{ex:d=5}, by elementary row operations it now follows that 
$$
\det(A') = \left( \prod_{\scriptscriptstyle q \vdash d-1, q \neq (d-1)} (n- \# q) \right) \cdot g(\underline{t}) 
$$
for some linear polynomial $g(\underline{t})$ in the $t_\lambda$ parameters. Moreover, since the bottom right corner entry of $A'$ is $n-1-t_{(d-1,1)}$, it is evident that if all $t_\lambda=0$ then 
$$
\det(A') =  \left(  \prod_{\scriptscriptstyle q \vdash d-1, q \neq (d-1)} (n- \# q) \right) \cdot (n-1) \neq 0.
$$
In other words, $g(\underline{0}) \neq 0$. Now let $V := \mathbb{A}^{P(d-1)} \setminus \mathbb{V}(g)$ where $\mathbb{A}^{P(d-1)}$ is the space of tuples $(t_\lambda)$. Arguing as in \Cref{ex:d=5}  above, it follows that $V$ is a non-empty Zariski-open set containing $(0)$. Thus for $(t_\lambda) \in V$ we conclude $\det(A') \neq 0$ and hence the original matrix $A$ is full rank. The dimension of the space of solutions is therefore $P(d)-1-P(d-1)$ as desired, and the rest of the claims follows as in Example~\ref{ex:d=5}. 
\end{proof}

Returning now to the discussion before Example~\ref{ex:d=5}, we observe that Proposition~\ref{prop: general Aprime} computes that 
$$
C_{generic} = C_0 = P(d)-P(d-1)-1. 
$$
Recall that, by the previous discussion, this implies 
$$
C_{generic} = D_{generic} = D_0 = C_0 = P(d)-P(d-1)-1.
$$
We have thus obtained the following Corollary. 

\begin{cor} 
\label{cor:L_A}
Assume ${\rm char}(\kk )=0$ and fix $n \geq d \geq 2$. There exists a non-empty Zariski-open subset $V \subseteq \mathbb{A}^{P(d)-1}$ of tuples $(t_\lambda)$ such that $0\in V$ and, for $(t_\lambda) \in V$, the following hold: 
\begin{enumerate} [\quad\rm(1)]
\item  The dimension of the solution set of the equations~\eqref{eq: D most q} and~\eqref{eq: D last q} in the variables $c_{i,\lambda}$ is equal to $P(d)-P(d-1)-1$, i.e., $$\dim_\kk L_{W(\underline{t})}=P(d)-P(d-1)-1$$ for $\underline{t} \in V$. 
\item Any such solution must satisfy the property that the $c_{i,\lambda}=c_{j,\lambda}$ for all $i,j\in [n]$, i.e., $$L_{W(\underline{t})}\subseteq  \left(\sum_{i=1}^n x_i\right) R_0^{P(d)-1}.$$ In particular, $L_{W(\underline{t})}$ is a trivial $\sym_n$-representation.
\end{enumerate}

\end{cor}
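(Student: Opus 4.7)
The proof plan is to assemble the pieces that have already been developed in the excerpt, using the inequality chain $c_{generic} \le d_{generic} \le d_0 = c_0$ discussed immediately before \Cref{ex:d=5}. First, I would formally introduce two Zariski-open sets: $U \subseteq \mathbb{A}^{P(d)-1}$ on which the matrix $A$ of coefficients of \eqref{eq: D most q}--\eqref{eq: D last q} (viewed as a linear system in the unknowns $c_{i,\lambda}$) has its generically maximal rank, and $V_{\mathrm{sym}} \subseteq \mathbb{A}^{P(d)-1}$ on which the matrix $A'$ of the corresponding symmetric system (the one with coefficients $c_\lambda := c_{i,\lambda}$ independent of $i$) has generically maximal rank. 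Both are nonempty by upper-semicontinuity of rank, so the quantities $d_{generic} := d(\underline t)$ for $\underline t \in U$ and $c_{generic} := c(\underline t)$ for $\underline t \in V_{\mathrm{sym}}$ are well defined. Setting $V := U \cap V_{\mathrm{sym}}$ produces the Zariski-open set in the statement, and I would verify that $0 \in V$ using the explicit non-vanishing computation $\det(A')|_{\underline t = 0} = \bigl( \prod_{q \vdash d-1,\, q \ne (d-1)} (n - \#q) \bigr)(n-1) \ne 0$ from \Cref{prop: general Aprime}, which forces $A$ to also be of maximal rank at $\underline t = 0$.

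Next I would prove the chain of inequalities
\begin{equation*}
c_{generic} \;\le\; d_{generic} \;\le\; d_0 \;=\; c_0.
\end{equation*}
The first inequality holds because imposing the extra linear condition that $c_{i,\lambda}$ be independent of $i$ shrinks the solution space (at any fixed $\underline t \in V$). The second holds by semicontinuity applied at $\underline t = 0$: rank can only drop from its generic value, so the kernel dimension can only grow. The crucial equality $d_0 = c_0$ is not a tautology; this is exactly the content of \Cref{prop:alternatesymmetry}, which asserts that when $\underline t = 0$ every linear relation among the $m_\lambda$ automatically forces $c_{i,\lambda} = c_{j,\lambda}$ for all $i,j$, i.e., every solution is already symmetric. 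Combining this with \Cref{prop: general Aprime}, which gives $c_{generic} = c_0 = P(d) - P(d-1) - 1$, collapses the chain to equalities and yields
\begin{equation*}
d_{generic} \;=\; c_{generic} \;=\; P(d) - P(d-1) - 1,
\end{equation*}
proving part (1).

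For part (2), the equality $d_{generic} = c_{generic}$ means that for $\underline t \in V$ the full solution space and its subspace of symmetric solutions have the same dimension, hence coincide. Unwinding the definitions, this says that every element $(\sum_{i=1}^n c_{i,\lambda} x_i)_{\lambda \ne (d)} \in L_{W(\underline t)}$ must satisfy $c_{i,\lambda} = c_{j,\lambda}$ for all $i,j \in [n]$ and all $\lambda \vdash d$ with $\lambda \ne (d)$, which is precisely the inclusion $L_{W(\underline t)} \subseteq \bigl(\sum_{i=1}^n x_i\bigr) R_0^{P(d)-1}$. Since $\sum_{i=1}^n x_i$ is $\sym_n$-invariant, the ambient space on the right is a trivial $\sym_n$-representation, so the $\sym_n$-stable subspace $L_{W(\underline t)}$ (stability being guaranteed by \Cref{lem:psi}, since the generators $m_\lambda - t_\lambda m_{(d)}$ of $W(\underline t)$ are $\sym_n$-invariant) is trivial as a representation. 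No step is a true obstacle: the main work has been done in \Cref{prop:alternatesymmetry} and \Cref{prop: general Aprime}, and the only subtlety is making the semicontinuity comparison between generic $\underline t$ and $\underline t = 0$ explicit enough to combine these two inputs.
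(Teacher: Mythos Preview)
Your proposal is correct and follows essentially the same route as the paper: the paper's proof is simply the observation that \Cref{prop: general Aprime} gives $c_{generic}=c_0=P(d)-P(d-1)-1$, which collapses the chain \eqref{eq:inequalities} to equalities, and the structural statement in (2) is read off exactly as you describe. One small ordering issue: your sentence ``which forces $A$ to also be of maximal rank at $\underline t=0$'' is not a direct consequence of the determinant computation for the \emph{symmetric} system alone---it needs $d_0=c_0$ from \Cref{prop:alternatesymmetry} together with the chain to conclude $d_0=d_{generic}$ and hence $0\in U$; since you establish the chain immediately afterward, this is only a matter of reordering, not a gap.
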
 

The above arguments show an open set of parameters $t_\lambda$ with the desired properties. In the next section, we will translate to a Zariski open subset of the parameter space for the polynomials $f$ that define the principal symmetric ideals of interest for this paper. This set will then be used in describing the generic condition on  these ideals that is  needed towards  the main theorem of this paper. 

\section{Main theorem, and the case of cubic principal symmetric ideals}\label{sec:main}

The present section contains the statement and proof of our main result, \Cref{introthm}. Some preliminary results are needed for the proof, since the argument relies on the theory developed for extremely narrow algebras in \cref{s:narrow}.  Thus we first prove Theorem~\ref{thm:mainnarrow}, which shows that the quotient algebras corresponding to general principal symmetric ideals are extremely narrow algebras.  We also need Theorem~\ref{thm:equivariantTors}, which gives us an understanding of the $\sym_n$-equivariant structure of the minimal resolutions which arise.  Finally, in Subsection \ref{subsec: cubic case}, we illustrate in some detail the case of cubic ideals, where $\mathrm{deg}(f) = 3$. In part, this serves as an illustration of our main results, but in addition, we present this case because in this situation we can prove some additional facts and give a more detailed analysis than in the general case. (Recall that the case $\mathrm{deg}(f) = 2$ was fully treated in \cref{sec: quadratic symmetric}.)

\subsection{Main theorems}\label{subsec: main theorems} 

We begin with the following result, which shows that for general principal symmetric ideals, the corresponding quotient is extremely narrow. 
Recall that $P(d)$ denotes the number of partitions of $d$.

\begin{thm}
\label{thm:mainnarrow}
Assume $\kk$ is infinite with ${\rm char}(\kk )=0$ and fix integers $d\geq 2$ and $n\gg0$.
If $I=(f)_{\sym_n}$ is a general principal symmetric ideal generated by a homogeneous polynomial $f$ of degree $d$, then the quotient algebra $A=R/I$ is a $d$-extremely narrow algebra. Moreover, the socle polynomial of $A$ is 
$$\left(\binom{n+d-2}{d-1}-(n-1)P(d)-P(d-1)+n-1\right)z^{d-1}+(P(d)-1)z^d.
$$
\end{thm}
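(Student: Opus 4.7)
The plan is to apply \Cref{strategy} with $a = P(d) - 1$, the elements $F_\lambda := m_\lambda - t_\lambda m_{(d)}$ of $S_{-d}$ indexed by partitions $\lambda \vdash d$ with $\lambda \neq (d)$, and the linear form $x := x_1 + \cdots + x_n$. For general $f$ we may assume $\alpha_{(d)}(f) \neq 0$ (a Zariski-open condition), so that the scalars $t_\lambda := \alpha_\lambda(f)/\alpha_{(d)}(f)$ are defined as in \Cref{n:W}. \Cref{lem:W(t)} then gives $F_\lambda \in (I^\perp)_{-d}$, and these $P(d)-1$ elements are linearly independent because the monomial symmetric polynomials $m_\mu$ for distinct $\mu \vdash d$ are linearly independent in $S$. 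Moreover $t(I) = d$ holds automatically because $I$ is generated in degree $d$ by a nonzero polynomial.

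Hypothesis (1) of \Cref{strategy}, namely $\dim_\kk I_d \geq \dim_\kk R_d - (P(d)-1)$, is established by combining upper semicontinuity of the minimal number of generators (\Cref{upper-s}) with the explicit polynomial constructed in \Cref{lem:generalexample}, whose associated symmetric ideal attains equality by part (2) of that lemma.

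For hypothesis (2), we must exhibit $x \in R_1$ with $L_{F_1,\ldots,F_a} \subseteq x R_0^a$. Take $x := x_1 + \cdots + x_n$ and invoke \Cref{cor:L_A}(2), which guarantees this containment whenever $\underline{t}(f) = (t_\lambda) \in V$, where $V \subseteq \A^{P(d)-1}$ is the nonempty Zariski-open set of that corollary. The rational map $\P^{N-1} \dashrightarrow \A^{P(d)-1}$ defined by $f \mapsto \underline{t}(f)$ is regular on $\{\alpha_{(d)} \neq 0\}$, and its preimage of $V$ is a Zariski-open subset of $\P^{N-1}$. To see that this preimage is nonempty, we appeal again to the polynomial $f$ from \Cref{lem:generalexample}: since each admissible binomial $b(\lambda,\gamma)$ has coefficients summing to zero, we have $\alpha_\lambda(f) = 0$ for every $\lambda \neq (d)$ and $\alpha_{(d)}(f) = 1$, so $\underline{t}(f) = 0 \in V$ by \Cref{cor:L_A}.

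Intersecting the Zariski-open conditions above produces a nonempty Zariski-open subset of $\P^{N-1}$ on which \Cref{strategy} applies, yielding that $A$ is $d$-extremely narrow with socle polynomial
\[
\bigl(\dim_\kk R_{d-1} - (P(d)-1)n + \dim_\kk L_A\bigr) z^{d-1} + (P(d)-1)z^d.
\]
Since the conclusion of \Cref{strategy} also gives $(I^\perp)_{-d} = \langle F_1, \ldots, F_a \rangle$, we have $L_A = L_{F_1, \ldots, F_a} = L_{W(\underline{t})}$, and \Cref{cor:L_A}(1) then yields $\dim_\kk L_A = P(d) - P(d-1) - 1$. Substituting, and using $\dim_\kk R_{d-1} = \binom{n+d-2}{d-1}$, gives the stated socle polynomial. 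The main obstacle, already resolved by the preceding two sections, is producing a single concrete witness (the polynomial from \Cref{lem:generalexample}) that simultaneously verifies non-vacuity for all of the Zariski-open conditions, in particular that the generic value of $\underline{t}(f)$ lies in the set $V$ controlled by the linear-relations computation of \Cref{cor:L_A}.
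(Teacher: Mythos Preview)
Your proof is correct and follows essentially the same route as the paper: verify the two hypotheses of \Cref{strategy} with $a=P(d)-1$ and the basis $\{m_\lambda - t_\lambda m_{(d)}\}_{\lambda\neq(d)}$ of $W(\underline t)$, using \Cref{lem:generalexample} together with \Cref{upper-s} for (1) and \Cref{cor:L_A} for (2), with the same witness polynomial from \Cref{lem:generalexample} (which has $\underline t=0\in V$) certifying nonemptiness of every open set involved. The only cosmetic difference is that the paper works with the unnormalized generators $F_\lambda=\alpha_{(d)}m_\lambda-\alpha_\lambda m_{(d)}$, which span the same space $W(\underline t)$.
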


\begin{proof}
Let  $c\in \P^{N-1}$ and let  $f_c\in R$ of degree $d$ be the corresponding polynomial, as defined in \cref{s:prelim}. Let $\alpha_{(d)}$ denote the sum of the coordinates of $c$ corresponding to coefficients of the monomials $x_i^d$ appearing in $f_c$. 
Since we are interested in general $f_c$, we may restrict to working with $c$ in the Zariski-open set where the sum $\alpha_{(d)}$ is nonzero -- the complement of the hyperplane $\mathbb{V}(\alpha_{(d)})$.

For each $\l\vdash d$, let $\alpha_\l$ denote the sum of all coefficients of monomials of type $\l$ in the support of $f_c$. For $c\in U$ and for $\l\vdash d$ set $t_\l=\alpha_\l/\alpha_{(d)}$. This gives a morphism
\[
\Psi:U\to \A^{\P(d)-1}, \quad c\mapsto t=(t_\l).
\]

To show that $A$ is $d$-extremely narrow, we need to verify conditions (1) and (2) in \cref{strategy}. To prove (1), use \Cref{lem:generalexample} and upper semicontinuity (\cref{upper-s}) to conclude 
$$
 \dim_\kk I_d\ge \dim_\kk R_d-(P(d)-1)
$$
for $f=f_c$ with $c\in U$, with $U$ a nonempty Zariski open set. 

For each partition $\lambda$ of $d$, $\lambda\ne (d)$, define polynomials
\begin{equation*}
\label{eq:Flambda}
F_\lambda=\alpha_{(d)}m_\lambda-\alpha_\lambda m_{(d)}.
\end{equation*}
and note that $\langle F_\lambda \mid \l\vdash d, \l\neq (d) \rangle=W(\underline{t})$ in the notation of \eqref{eq:W(t)}. 
Since the polynomials $m_\lambda$ have disjoint support, they are linearly independent and consequently the polynomials $F_\lambda$ are also linearly independent.
In \Cref{cor:L_A} we have identified a nonempty Zariski-open set $V\subseteq \A^{P(d)-1}$ so that $0\in V$. Take $V'=\Psi^{-1}(V)$. Then $V'$ is Zariski-open and non-empty as we have seen that $\Psi(c)=0$ for the polynomial $f_c$ in \eqref{eq:large f}, so that $V'$ is non-empty.

\Cref{cor:L_A} shows that for $c\in U\cap V'$ one has
\begin{equation}
\label{eq:dimL}
\dim_\kk L_{W(\underline{t})}=P(d)-P(d-1)-1\qquad\text{and}\qquad L_{W(\underline{t})}\subseteq \left(\sum_{i=1}^n x_i \right)R_0^{P(d)-1}\,.
\end{equation}
In particular, the inclusion above implies that $L_{W(\underline{t})}$ carries a trivial $\sym_n$-structure. 

Let $c\in U\cap V'$. Since $ \{F_\l \mid \l\neq(d)\}$ is linearly independent and is contained in $(I^\perp)_{-d}$ by \Cref{lem:W(t)}, equation \eqref{eq:dimL} allows us to apply 
  \cref{strategy}  with $a=P(d)-1$, and conclude that $A$ is $d$-extremely narrow with 
socle polynomial $bz^{d-1}+(P(d)-1)z^d$, where 
\begin{eqnarray*}
b &=& \dim_\kk R_{d-1}-(P(d)-1)n +\dim_\kk L_{F_\lambda, \lambda\ne (d)}\\
&=& \dim_\kk R_{d-1}-(P(d)-1)(n-1)+P(d-1). \qedhere
\end{eqnarray*}
\end{proof}

\begin{cor}
\label{cor:I perp equal W(t)} 
Under the assumption of \Cref{thm:mainnarrow} for a general principal sym\-me\-tric ideal $(f)_{\sym_n}$ we have an equality in \Cref{lem:W(t)}. Namely, 
\[
\left((f)_{\sym_n}^\perp\right)_{-d}=W(\underline{t})
\]
for the tuple $\underline{t}$ defined therein. In particular, this vector space carries a trivial $\sym_n$-action.
\end{cor}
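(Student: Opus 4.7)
The plan is to prove the corollary by a dimension count, using the inclusion $W(\underline{t}) \subseteq \left((f)_{\sym_n}^\perp\right)_{-d}$ already established in \Cref{lem:W(t)} together with the socle information furnished by \Cref{thm:mainnarrow}.

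First, I would compute $\dim_\kk W(\underline{t})$. Since the monomial symmetric polynomials $\{m_\lambda : \lambda \vdash d\}$ are $\kk$-linearly independent in $S_{-d}$ (their supports are pairwise disjoint), the elements $\{m_\lambda - t_\lambda m_{(d)} : \lambda \vdash d,\ \lambda \neq (d)\}$ are linearly independent for any choice of scalars $\underline{t}$. Hence $\dim_\kk W(\underline{t}) = P(d) - 1$.

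Next, I would compute $\dim_\kk \left((f)_{\sym_n}^\perp\right)_{-d}$ from \Cref{thm:mainnarrow}. Setting $A = R/I$ with $I = (f)_{\sym_n}$, that theorem gives that $A$ is $d$-extremely narrow with socle polynomial having leading term $(P(d)-1)z^d$. Since $A$ is narrow with $s(A) = d$ by the definition of $d$-extremely narrow, we have $A_d = \Soc(A)_d$, so $\dim_\kk A_d = P(d) - 1$. Using the isomorphism $I^\perp \cong A^\vee$ from \Cref{p:BHresults} (or equivalently the identity $\HF_{I^\perp}(-d) = \HF_A(d)$ used right before \eqref{eq:GIperp}), we conclude $\dim_\kk (I^\perp)_{-d} = P(d) - 1$.

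Combining the two dimension counts with the inclusion $W(\underline{t}) \subseteq (I^\perp)_{-d}$ from \Cref{lem:W(t)} forces equality. Finally, each monomial symmetric polynomial $m_\lambda$ is $\sym_n$-invariant by \Cref{l:actionS}(1), so every generator $m_\lambda - t_\lambda m_{(d)}$ of $W(\underline{t})$ is $\sym_n$-invariant; therefore $\sym_n$ acts trivially on $W(\underline{t}) = (I^\perp)_{-d}$. There is no real obstacle here since the heavy lifting was done in \Cref{thm:mainnarrow}; the corollary is essentially a bookkeeping consequence.
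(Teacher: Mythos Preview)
Your proof is correct and follows essentially the same approach as the paper: both argue by dimension count, using the inclusion from \Cref{lem:W(t)}, the linear independence of the $m_\lambda$ to get $\dim_\kk W(\underline{t})=P(d)-1$, and the socle information from \Cref{thm:mainnarrow} to get $\dim_\kk(I^\perp)_{-d}=P(d)-1$, then conclude triviality of the $\sym_n$-action from the invariance of the generators.
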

\begin{proof}
Since \Cref{lem:W(t)} provides a containment, the desired equality follows by establishing equality for the dimensions of the two vector spaces. By \Cref{thm:mainnarrow} we have $\dim_\kk\left((f)_{\sym_n}^\perp\right)_{-d}=P(d)-1$. Since the spanning set of $W(\underline{t})$ in \eqref{eq:W(t)} is a basis, due to the linear independence of the monomial symmetric functions, we conclude that $\dim_\kk W(\underline{t})=P(d)-1$ as well. Finally, since $W(\underline{t})$ is spanned by $\sym_n$-invariant polynomials we conclude it is a trivial representation.
\end{proof}

For our main result we also need to understand the equivariant structure of the resolution of a principal symmetric ideal. This is summarized in the following result.

\begin{thm}
\label{thm:equivariantTors}
Assume $\kk$ is infinite of ${\rm char}(\kk)=0$ and fix an integer $d\geq 2$. Set $a=P(d)-1$ and $\ell=P(d)-P(d-1)-1$. For $n$ sufficiently large, a general principal symmetric ideal $I=(f)_{\sym_n}$ generated by a homogeneous polynomial $f$ of degree $d$ yields a quotient $A=R/I$ so that for $1\leq i \leq n-1$ we have 
\begin{gather*}
\Tor_i(A,\kk)_{i+d-1} \cong \bigoplus\limits_{\substack{|\l|\leq i+d\\\l_1+|\l|\leq n\\\l\neq (1^i), \l\neq(1^{i-1})}}\left(\Sp_{\l(n)}\right)^{a_{(d, 1^{i})}^{\l(n)}} \oplus \left(\Sp_{(n-i+1,1^{i-1})}\right)^{a_{(d, 1^{i})}^{(n-i+1, 1^{i-1})}-a} \\ \oplus
\hfill\left(\Sp_{(n-i,1^{i})}\right)^{a_{(d, 1^{i})}^{(n-i, 1^{i})-a}}
\end{gather*}
and all the remaining nonzero components of $\Tor_i^R(A,\kk)$ with $1\leq i$ are given by\\
\begin{gather*}
\Tor_{n-1}^R(A,\kk )_{n-1+d} \cong \Sp_{(1^n)}^\ell
\\
\Tor_{n}(A,\kk)_{n-1+d} \cong \bigoplus\limits_{\substack{\l_1+|\l|\leq n\\\l(n)\neq (1^n)\\ \l(n)\neq(2,1^{i-2})}}\left(\Sp_\l\right)^{a_{(d, 1^{i})}^{\l(n)}} \oplus \left(\Sp_{(1^n))}\right)^{a_{(d, 1^{i})}^{(1^n)}+P(d-1)+1} \oplus 
\hfill\left(\Sp_{(2,1^{n-2})}\right)^{a_{(d, 1^{i})}^{(2, 1^{n-2})}-a} 
\\
\Tor_{n}^R(A,\kk )_{n+d}\cong  \Sp_{(1^n)}^a.
\end{gather*}

The formulas above utilize exponents  given by \eqref{eq:plethysm}.
\end{thm}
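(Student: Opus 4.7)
The plan is to compute the $\sym_n$-equivariant structure of the minimal free resolution of $A=R/I$ by combining the $d$-extremely narrow framework of \cref{s:narrow} with the known equivariant resolutions of $\m^d$ and of $\kk$. The first step is to restrict to $I=(f)_{\sym_n}$ lying in the nonempty Zariski-open set from \Cref{thm:mainnarrow}, on which $A$ is $d$-extremely narrow with socle polynomial $bz^{d-1}+az^d$ and $a=P(d)-1$. By \Cref{cor:I perp equal W(t)}, the degree $-d$ component $(I^\perp)_{-d}$ is spanned by the $\sym_n$-invariant polynomials $F_\lambda=\alpha_{(d)}m_\lambda-\alpha_\lambda m_{(d)}$, and so is isomorphic to the trivial representation $\Sp_{(n)}^{a}$; \Cref{cor:L_A} likewise identifies $L_A$ with the trivial representation of dimension $\ell=P(d)-P(d-1)-1$.

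The next step is to lift the short exact sequence \eqref{exact-seq} to $\MG R$. The final assertion of \Cref{l:narrow}(5) guarantees that $\varphi\colon \m^d\to\kk^a(-d)$ is $\sym_n$-equivariant for the trivial action on $\kk^a(-d)$, so the induced long exact sequence in $\Tor^R(-,\kk)$ is equivariant. Since $\mathrm{char}(\kk)=0$, all the relevant $\sym_n$-representations are semisimple and the splittings in \Cref{thm:ses} hold in $\MG R$. Substituting the $\sym_n$-decomposition of $\Tor_i^R(\m^d,\kk)$ supplied by \Cref{lem:Lirreps} together with
\[
\Tor_i^R(\kk^a(-d),\kk)_{i+d}\;\cong\;\kk^a\otimes_\kk\bigl(\Sp_{(n-i,1^i)}\oplus\Sp_{(n-i+1,1^{i-1})}\bigr)
\]
obtained from \Cref{ex:resk}, and subtracting multiplicities in the split short exact sequences of \Cref{thm:ses}, produces the equivariant description of $\Tor_i^R(I,\kk)_{i+d}$ for $i\leq n-2$. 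Re-indexing via $\Tor_i^R(A,\kk)\cong\Tor_{i-1}^R(I,\kk)$ for $i\geq 1$ then yields the stated formula for $\Tor_i^R(A,\kk)_{i+d-1}$ when $1\leq i\leq n-1$.

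For the three Betti slots at the rightmost end of the resolution, I would pass through equivariant Matlis duality (\Cref{equivariantBoij}), applied with $U=A$ and $A^\vee\cong I^\perp$. Duality introduces a twist by the sign representation $\Sp_{(1^n)}=\bw^n R_1$, yielding
\[
\Tor_{n-1}^R(A,\kk)_{n-1+d}\cong\Sp_{(1^n)}\otimes_\kk\bigl(\Tor_1^R(I^\perp,\kk)_{1-d}\bigr)^\vee
\]
and
\[
\Tor_n^R(A,\kk)_{n+d}\cong\Sp_{(1^n)}\otimes_\kk\bigl((I^\perp)_{-d}\bigr)^\vee.
\]
By \Cref{en-props}(2) the first vector space is $L_A\cong \Sp_{(n)}^\ell$, and the second is trivial of dimension $a$, so these two components are $\Sp_{(1^n)}^\ell$ and $\Sp_{(1^n)}^a$, respectively. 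The remaining entry $\Tor_n^R(A,\kk)_{n+d-1}\cong\Tor_{n-1}^R(I,\kk)_{n-1+d}$ is then read off as the kernel in the four-term equivariant exact sequence of \Cref{thm:ses} at $i=n-1$: subtract the known contribution of $\Tor_{n-1}^R(\kk^a(-d),\kk)_{n-1+d}$ from $\Tor_{n-1}^R(\m^d,\kk)_{n-1+d}$ and then correct by the cokernel term $\Tor_{n-2}^R(I,\kk)_{n-1+d}\cong\Sp_{(1^n)}^\ell$ already computed.

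The main obstacle will be the careful bookkeeping of the sign twist coming from equivariant Matlis duality. Both $L_A$ and $(I^\perp)_{-d}$ carry trivial $\sym_n$-actions, so a naive application of the non-equivariant identifications in \Cref{thm:ses} would wrongly locate the end-of-resolution contributions in $\Sp_{(n)}$ rather than $\Sp_{(1^n)}$; it is precisely the twist from \Cref{equivariantBoij} that is responsible for the sign representation appearing in the last column of the Betti table. Once this twist is tracked correctly, the remainder of the argument is a direct assembly of \Cref{thm:ses}, \Cref{lem:Lirreps}, and \Cref{ex:resk}, made rigorous by the semisimplicity of $\sym_n$-representations in characteristic zero.
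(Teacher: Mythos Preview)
Your proposal is correct and follows essentially the same route as the paper: both arguments use the $\sym_n$-equivariance of the sequence $0\to I\to\m^d\to\kk^a(-d)\to 0$ (via \Cref{l:narrow}(5) and the $\sym_n$-invariance of the $F_\lambda$), feed in the decompositions from \Cref{lem:Lirreps} and \Cref{ex:resk} to the split pieces of the long exact sequence in \Cref{thm:ses}, and handle the last two homological degrees by invoking \Cref{equivariantBoij} together with \Cref{en-props}(2) and \Cref{cor:L_A} to identify $\Tor_{n-2}^R(I,\kk)_{n-1+d}\cong\Sp_{(1^n)}^\ell$ before reading off the kernel of the four-term sequence. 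Your explicit flagging of the sign twist from \Cref{equivariantBoij} as the main bookkeeping point is exactly right and matches what the paper does implicitly in its computation of $\Tor_{n-2}^R(I,\kk)_{n-1+d}$.
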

\begin{proof}
By \Cref{l:narrow}~(5) we have  a short exact sequence
\begin{equation}
\label{eq:quadratic2}
0\to I\to R_{\geqslant d}\xrightarrow{\varphi} {\kk}^a(-d)\to 0.
\end{equation}
Recall that map $\varphi$ is given by $\varphi(r)=(r\circ F_1, \ldots, r\circ F_a)$ where $F_1, \ldots, F_a$ form a basis for $(I^\perp)_{-d}$. Furthermore, by \eqref{eq:W(t)} and \cref{cor:I perp equal W(t)},   the basis $F_1, \ldots, F_a$ can be chosen so that  it is indexed by $\l\vdash d, \l\neq(d)$ and 
$$F_\l=m_\l-t_\l m_{(d)}.$$
Note that these polynomials are invariant under the action of the symmetric group on $S$.  By \cref{l:narrow}~(5), the exact sequence \eqref{eq:quadratic2} is $\sym_n$-equivariant, when considering the trivial $\sym_n$-action on $\kk^a(-d)$.

By \Cref{thm:ses} and \Cref{lem:A5} (3), the $\sym_n$-equivariant sequence \eqref{eq:quadratic2} induces the following equivariant short exact sequences relating the nonzero graded components of the relevant homology modules: 
\begin{gather}\label{eq:8.3}
0\to \Tor_i^R(I,\kk )_{i+d}\to \Tor_i(\m^d, \kk)_{i+d}\to \Tor_i(\kk^a,\kk )_{i}\to 0\qquad \text{for  $i\le n-2$}\\
\label{eq:8.4}
0\to \Tor_{n-1}^R(I,\kk )_{n-1+d}\to  \Tor_{n-1}^R(\m^d, k)_{n-1+d}\to \Tor_{n-1}^R(\kk^a,\kk )_{n-1}\to \Tor_{n-2}^R(I,\kk )_{n-1+d}\to 0\\
\label{eq:last}
\Tor_n^R(\kk^a,\kk )_{n}\cong\Tor_{n-1}^R(I, \kk)_{n+d}
\end{gather}
Focusing on $i\leq n-2$ and internal degree  $i+d$ and substituting  \eqref{eq:tormdequivsmalli} and  \eqref{eq:equivTorkk}  into \eqref{eq:8.3} we obtain the short exact sequence below which yields the first item of the claim
\[
0\to \Tor_i^R(I,\kk )_{i+d}\to  \bigoplus_{\substack{|\l|\leq i+d\\\l_1+|\l|\leq n}}\left(\Sp_{\l(n)}\right)^{a_{(d, 1^{i})}^{\l(n)}}\to \Sp_{(n-i,1^i)}^a \oplus  \Sp_{(n-i+1,1^{i-1})}^a\to 0.
\]
 For $i=n-2$,  we have isomorphisms in $\MG R$ 
\begin{eqnarray}
 \Tor_{n-2}^R(I,\kk )_{n-1+d} &=& \Tor_{n-1}^R(R/I,\kk )_{n-1+d}\cong  (\Tor_{1}^R(I^\perp,\kk )_{d-1})^\vee\otimes_{\kk}\Sp_{(1^n)} \nonumber \\
 &\cong&(L_{F_1,\cdots, F_a})^\vee\otimes_{\kk}\Sp_{(1^n)}=\Sp_{(n)}^\ell \otimes_{\kk}\Sp_{(1^n)}=\Sp_{(1^n)}^\ell.\label{eq:8.5}
 \end{eqnarray}
The first isomorphism comes from \Cref{equivariantBoij} and \Cref{lem:torbalance}, the second isomorphism comes from  \cref{en-props} (noting that it is indeed an isomorphism in $\MG R$ due to the $\sym_n$-structure on $L_{F_1,\cdots, F_a}$) and the second to  last equality comes from \eqref{eq:dimL}. 
 
 Substituting  \eqref{eq:tormdequivsmalli},  \eqref{eq:equivTorkk} and \eqref{eq:8.5} into \eqref{eq:8.4} yields 
 \[
 0\to \Tor_{n-1}^R(I,\kk )_{n-1+d}\to  \bigoplus_{\substack{\l_1+|\l|\leq n}}\left(\Sp_{\l(n)}\right)^{a_{(d, 1^{n-1})}^{\l(n)}}\to \Sp_{(1^n)}^a \oplus \Sp_{(2,1^{n-2})}^a \to \Sp_{(1^n)}^\ell\to 0.
 \]
 whence the second item of the claim follows. Finally, \eqref{eq:last} establishes the last claim.
 \end{proof}
 
 We now recall and prove our main theorem.
 
 \begin{thm}
 \label{introthm}Suppose $\kk$ is infinite with ${\rm char}(\kk)=0$ and fix an integer $d\geq 2$. For sufficiently large  $n$, a general principal symmetric ideal $I$ of ${\kk}[x_1,\ldots, x_n]$ generated in degree $d$ has the following properties: 
\begin{enumerate}[\quad\rm(1)]
\item the Hilbert function of $A=R/I$ is given by
\[
 \HF_A(i):=\dim_\kk A_i = \begin{cases}\dim_\kk R_i &\text{if $i\le d-1$.}\\
 P(d)-1&\text{if $i=d$}\\
 0 &\text{if $i>d$,}\
 \end{cases}
\]
\item the betti table of $A$ has the form
\[
\begin{matrix}
     &0&1&2&\cdots&i &\cdots & n-2 &n-1&n\\
     \hline
     \text{total:}&1&u_1&u_2&\cdots&u_i&\cdots&u_{n-2} &u_{n-1}+\ell&a+b\\
     \hline
     \text{0:}&1&\text{.}&\text{.}&\text{.}&\text{.}&\text{.}&\text{.}&\text{.}&\text{.}\\
     \text{\vdots}&\text{.}&\text{.}&\text{.}&\text{.}&\text{.}&\text{.}&\text{.}&\text{.} &\text{.}\\
     \text{d-1:}&\text{.}&u_1&u_2&\cdots&u_i&\cdots&u_{n-2} &u_{n-1}&b\\
     \text{d:}&\text{.}&\text{.}&\text{.}&\text{.}&\text{.}&\text{.}&\text{.}&\ell & a \\
     \end{matrix}
\]
with 
\begin{eqnarray*}
a &=&P(d)-1,\\
 b&=&
 \dim_\kk R_{d-1}-(P(d)-1)(n-1)+P(d-1) \\
  u_{i+1} &=& \binom{n+d-1}{d+i}\binom{d+i-1}{i}-\left(P(d)-1\right)\binom{n}{i} \\
    \ell &=& P(d)-P(d-1)-1, 
  \end{eqnarray*}
\item the graded minimal free resolution of $A$ has $\sym_n$-equivariant structure described by the $\sym_n$-irreducible decompositions for the modules $\Tor_i^R(A,\kk)$ given in 
\Cref{thm:equivariantTors}.
\end{enumerate}

Moreover,  the Poincar\'e series of all finitely generated graded $A$-modules are rational, sharing a common denominator. When $d>2$, $A$ is Golod. When $d=2$, $A$ is Gorenstein and Koszul. 
\end{thm}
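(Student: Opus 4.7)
The plan is to derive \Cref{introthm} as a direct synthesis of the structural results developed in \Cref{s:narrow}, \Cref{sec: linear relations}, and \Cref{subsec: main theorems}. The starting point is \Cref{thm:mainnarrow}, which produces a non-empty Zariski-open subset $U\subseteq \mathbb{P}^{N-1}$ such that for each $c\in U$, the quotient $A=R/(f_c)_{\sym_n}$ is $d$-extremely narrow, with $a=\dim_\kk(I^\perp)_{-d}=P(d)-1$. Along the way the proof of \Cref{thm:mainnarrow} invokes \Cref{cor:L_A}, which on the same open set (after pulling back by the map $\Psi\colon U\to \A^{P(d)-1}$ sending $c$ to the tuple $\underline{t}$) gives $\dim_\kk L_A = P(d)-P(d-1)-1 = \ell$.

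Once extreme narrowness is in place, part (1) on the Hilbert function is immediate from \Cref{l:narrow}~(2) with $a=P(d)-1$. For part (2) we invoke \Cref{cor:betti} applied to $I$, and reindex using $\beta_i(A)=\beta_{i-1}(I)$ for $i\ge 1$ to match the displayed table (this is the source of the shift from $u_i$ in \Cref{cor:betti} to $u_{i+1}$ in the statement). The numerical values of $\beta_i(\m^d)$ and $\beta_i(\kk)$ are read off from \Cref{ex:resk} and \Cref{ex:resmd}: the latter gives $\beta_i(\kk)=\binom{n}{i}$, while \eqref{eq:LasSchur} identifies $\beta_i(\m^d)$ with $\dim_\kk \SS_{(d,1^i)}$, and the hook content formula for the hook shape $(d,1^i)$ evaluates this to $\binom{n+d-1}{d+i}\binom{d+i-1}{i}$. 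Plugging $a=P(d)-1$, $\ell=P(d)-P(d-1)-1$, and the value of $b$ extracted from \Cref{l:narrow}~(1) produces the stated table.

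Part (3) is exactly the content of \Cref{thm:equivariantTors}, which holds on a non-empty Zariski-open subset of $\mathbb{P}^{N-1}$; since $\kk$ is infinite, the intersection with the open sets used in parts (1) and (2) remains non-empty. For the final clauses: when $d\ge 3$, $A$ is narrow with $t(I)=d\ge 3$, so \Cref{prop:GolodKoszul}~(1) yields Golodness together with rationality of the Poincar\'e series of every finitely generated graded $A$-module, all sharing a common denominator. When $d=2$, $A$ is $2$-extremely narrow, so \Cref{prop:GolodKoszul}~(2) delivers Koszulness and rationality with common denominator $1-nt+t^2$; the Gorenstein statement for $d=2$ follows from \Cref{thm:soclequadratic}.

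The bulk of the work having been front-loaded into the prior sections, the main obstacle is arithmetic: simplifying the hook content formula for $\dim_\kk\SS_{(d,1^i)}$ into the compact binomial product $\binom{n+d-1}{d+i}\binom{d+i-1}{i}$, and converting the raw expression $b=\dim_\kk R_{d-1}-(P(d)-1)n+\dim_\kk L_A$ from \Cref{l:narrow}~(1) into the grouped form $\dim_\kk R_{d-1}-(P(d)-1)(n-1)+P(d-1)$ shown in the statement. Both are routine but require care with signs, particularly since $\dim_\kk L_A=P(d)-P(d-1)-1$ introduces a difference that must be absorbed into the $(n-1)$ factor without error.
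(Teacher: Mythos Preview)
Your proposal is correct and follows essentially the same route as the paper: invoke \Cref{thm:mainnarrow} to get $d$-extreme narrowness with $a=P(d)-1$ and $\dim_\kk L_A=\ell$, then read off the Hilbert function from \Cref{l:narrow}(2), the betti table from \Cref{cor:betti}, the equivariant structure from \Cref{thm:equivariantTors}, and the Golod/Koszul/Gorenstein statements from \Cref{prop:GolodKoszul} and \Cref{thm:soclequadratic}. Your added detail on deriving $\beta_i(\m^d)=\binom{n+d-1}{d+i}\binom{d+i-1}{i}$ from the hook content formula for $\SS_{(d,1^i)}$ is a useful elaboration that the paper leaves implicit.
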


 \begin{proof}
 The first two items follow from \Cref{thm:mainnarrow} by means of the formulas developed in \Cref{l:narrow} for the Hilbert function and in \Cref{cor:betti} for betti numbers of extremely narrow algebras, respectively. The third item is \Cref{thm:equivariantTors}. Based on the value $\beta_n(A)=a+b$ established in part (2) as well as the evident  inequality $a\geq 1$, it can be deduced that $A$ is Gorenstein if and  only if $P(d)-1=a=1$ and $b=0$.  This yields $d=2$. The case of $d=2$ is discussed in detail in \Cref{thm:soclequadratic}, where it is also established that $A$ is Gorenstein provided the principal symmetric ideal is quadratic.

 The rationality of Poincar\' e series and the fact that $A$ is Golod when $d>2$ and is Koszul when $d=2$ follow from \Cref{prop:GolodKoszul}. 
 \end{proof}

\subsection{Cubic principal symmetric ideals}\label{subsec: cubic case}

While the general results in the previous section hold for $n$ sufficiently large, we want to make the point in this subsection that for the cubic case ($d=3$) specifically, the general behavior described in  \Cref{thm:cubic}  holds whenever $n\ge 5$, and this precise lower bound on $n$ is not covered by the previous results.  (See \Cref{ex:d=3}, which shows that our previous results would only apply to $n\geq 26$ when $d=3$.)  Additionally, the lower bound $n\geq 5$ is sharp, that is, it is the smallest embedding dimension in which the betti numbers of cubic principal symmetric ideals behave as predicted in \Cref{introthm} (2). This is illustrated by the Macaulay2 computations recorded in \Cref{ex:fewvar} below.

\begin{ex}
\label{ex:fewvar}
 Computations performed using the computer algebra system Macaulay2 \cite{M2} indicate that with very high probability for $\kk=\Q$ a general cubic principal symmetric ideal $I$ has the following betti table in low embedding dimensions $n\leq 4$:
\[
\begin{tabular}{|c|c|c|c|c|}
\hline
$n$ & 1 & 2 & 3& 4\\
\hline
 \text{betti table of} $R/I$ 
 &
 $\begin{matrix}
     &0&1\\\text{0:}&1&\text{.}\\\text{1:}&\text{.}&\text{.}\\\text{2:}&\text{.}&1\\\end{matrix}
     $
  &
 $\begin{matrix}
     &0&1&2\\\text{0:}&1&\text{.}&\text{.}\\\text{1:}&\text{.}&\text{.}&\text{.}\\\text{2:}&\text{.}&2&\text{.}\\\text{3:}&\text{.}&\text{.}&\text{.}\\\text{4:}&\text{.}&\text{.}&1\\\end{matrix}$
&
 $\begin{matrix}
     &0&1&2&3\\\text{0:}&1&\text{.}&\text{.}&\text{.}\\\text{1:}&\text{.}&\text{.}&\text{.}&\text{.}\\\text{2:}&\text{.}&6&4&\text{.}\\\text{3:}&\text{.}&\text{.}&3&1\\\text{4 :}&\text{.}&\text{.}&\text{.}&1\\\end{matrix}$
&
 $\begin{matrix}
     &0&1&2&3&4\\\text{0:}&1&\text{.}&\text{.}&\text{.}&\text{.}\\\text{1:}&\text{.}&\text{.}&\text{.}&\text{.}&\text{.}\\\text{2:}&\text{.}&15&26&10&\text{.}\\\text{3:}&\text{.}&\text{.}&\text{.}&4&1\\\text{4:}&\text{.}&\text{.}&\text{.}&\text{.}&1\\\end{matrix}$ \\
\hline
\end{tabular}
\]
\end{ex}

We now present an example that requires $n\geq 5$ variables. 

\begin{ex}
\label{cubic-ex}
Assume $n\ge 5$ and $\ch(\kk)\neq 2$. Take 
\[I=(x_1^3-x_2^3+x_1^2x_3+x_2x_3x_4-x_2x_3x_5)_{\sym_n}.
\] 
\begin{claim}
\begin{align*}
I&=(x_i^3-x_j^3,\,\, x_k^2x_\ell,\,\, x_px_qx_r-x_ux_vx_w\mid i<j, k\ne \ell, p<q<r, u<v<w)\\
&=(x_1^3-x_j^3,\,\, x_k^2x_\ell,\,\, x_1x_2x_3-x_ux_vx_w\mid j, k\ne \ell, u<v<w, \{1,2,3\}\ne \{u,v,w\})\,,
\end{align*}
\end{claim}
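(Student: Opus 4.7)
Write $J_1$ and $J_2$ for the first and second ideals on the right-hand side of the claim. The plan is to verify $J_1 \subseteq I \subseteq J_1$ directly from generators, and then observe $J_1 = J_2$ by elementary linear combinations. Throughout, I use that $\operatorname{char}(\kk)\neq 2$ and $n\geq 5$.

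For $J_1\subseteq I$: since $J_1$ is $\sym_n$-stable it suffices to produce one representative of each orbit of generators in $I$. First, with $\sigma_1=(4\,5)$ the computation
\[
\tfrac{1}{2}(f-\sigma_1\cdot f)=x_2x_3x_4-x_2x_3x_5,\qquad \tfrac{1}{2}(f+\sigma_1\cdot f)=x_1^3-x_2^3+x_1^2x_3
\]
puts both polynomials in $I$; call the second $g$. Next,
\[
h:=g+(1\,2)\cdot g=x_1^2x_3+x_2^2x_3\in I.
\]
Using the transpositions $(1\,4)$ and $(2\,4)$, which is legal because $n\geq 5$, I compute
\[
h+(2\,4)\cdot h-(1\,4)\cdot h=(x_1^2x_3+x_2^2x_3)+(x_1^2x_3+x_4^2x_3)-(x_4^2x_3+x_2^2x_3)=2x_1^2x_3,
\]
so $x_1^2x_3\in I$ and hence $x_k^2x_\ell\in I$ for every $k\neq \ell$ by the $\sym_n$-action. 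Then $x_1^3-x_2^3=g-x_1^2x_3\in I$, so every $x_i^3-x_j^3$ lies in $I$. Finally, from $x_2x_3x_4-x_2x_3x_5\in I$ and the $\sym_n$-action, every ``one-index-different'' triple binomial $x_ax_bx_c-x_ax_bx_d$ (with $a,b,c,d$ pairwise distinct) is in $I$. A general triple binomial $x_px_qx_r-x_ux_vx_w$ can be obtained by telescoping through intermediate triples: when $|\{p,q,r\}\cap\{u,v,w\}|=2$ the binomial is already of the above form; when the intersection has size $1$ one inserts a single intermediate triple, needing only $5$ distinct indices; when the intersection is empty, one needs $6$ indices, which forces $n\geq 6$ but is vacuous for $n=5$ since $|\{p,q,r\}\cup\{u,v,w\}|=6$ cannot occur.

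For $I\subseteq J_1$: since $J_1$ is generated by a $\sym_n$-stable set it is itself a symmetric ideal, so it suffices to check $f\in J_1$; but each of the five monomials of $f$ pairs off into the three listed types ($x_1^3-x_2^3$, $x_1^2x_3$, $x_2x_3x_4-x_2x_3x_5$), each of which is a generator of $J_1$.

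For $J_1=J_2$: the inclusion $J_2\subseteq J_1$ is clear since the generating set of $J_2$ is a subset of that of $J_1$. For the reverse, every cubic binomial is written as $x_i^3-x_j^3=(x_1^3-x_j^3)-(x_1^3-x_i^3)\in J_2$ (with the convention $x_1^3-x_1^3=0$), and every triple binomial as $x_px_qx_r-x_ux_vx_w=(x_1x_2x_3-x_ux_vx_w)-(x_1x_2x_3-x_px_qx_r)\in J_2$ (one or both of the summands is zero when $\{p,q,r\}$ or $\{u,v,w\}$ equals $\{1,2,3\}$); the type $(2,1)$ monomial generators coincide in both ideals.

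The only genuinely substantive step is the extraction of the isolated monomial $x_1^2x_3$, which requires the three-term linear combination above and the hypothesis $n\geq 5$; everything else is bookkeeping with $\sym_n$-orbits and telescoping sums.
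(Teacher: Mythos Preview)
Your argument is correct and follows the same overall strategy as the paper: separate $f$ into its type pieces via the $(4\,5)$-averaging, then extract each generator of $J_1$ by further $\sym_n$-manipulations, and note $f\in J_1$ for the reverse inclusion. The only tactical differences are in the extraction steps: the paper first isolates a pure-cube binomial $s=g-(2\,4)\cdot g=-x_2^3+x_4^3$ and then obtains $x_1^2x_3=g+\sigma\cdot s$, whereas you first kill the cube terms via $g+(1\,2)\cdot g$ and then use a three-term combination; and for the squarefree binomials the paper gives explicit permutations producing representatives with intersection sizes $2$, $1$, and (when $n\ge 6$) $0$, while you argue by telescoping. Both routes are equally short and rely on the same ideas.
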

Indeed, let $f=x_1^3-x_2^3+x_1^2x_3+x_2x_3x_4-x_2x_3x_5$. Notice that 
$$
g:=\frac{1}{2}\left(f+(4\, 5)\cdot f\right)=x_1^3-x_2^3+x_1^2x_3\in I\qquad\text{and}\qquad h:=f-g=x_2x_3x_4-x_2x_3x_5\in I\,.
$$
If $\sigma\in\sym_5$ satisfies $\sigma(2)=1$ and $\sigma(4)=2$ we have
$$
s:= g-(2\, 4)\cdot g=-x_2^3+x_4^3\in I\qquad \text{and} \qquad g+\sigma \cdot s=x_1^2x_3\in I
$$
and 
$$
t:=h-(1\,5)(3\, 4)\cdot h=x_2x_3x_4-x_2x_3x_5-(x_2x_3x_4-x_2x_4x_1)=x_1x_2x_4-x_2x_3x_5\in I\,.
$$
By applying suitable permutations, $t$ and $h$ yield all binomials $ x_1x_2x_3-x_ux_vx_w$ where $\{1,2,3\}\cap \{u,v,w\}$ has two, respectively one, element(s). If $n=5$ this recovers all the binomials $ x_1x_2x_3-x_ux_vx_w$ with $1\leq u,v,w\leq 5$. 

If $n\ge 6$, then we also get 
$$
t-(1\,3)(2\,4)(5\,6)\cdot h=x_1x_2x_4-x_2x_3x_5-(x_1x_2x_4-x_1x_4x_6)=x_1x_4x_6-x_2x_3x_5\in I
$$
The Claim follows from here, as all listed generators of $I$ can be obtained by acting with a permutation on one of the monomials or binomials that we established to be in $I$. 
The generators listed in the second line of the Claim are clearly linearly independent, and thus a count yields
$$
\dim_{\kk}I_3=\dim_\kk R_3-2\,.
$$
A computation in Macaulay2 with $\kk=\mathbb Q$ yields that the betti table of the ideal described in this example for $n=5$ is
\begin{equation}
\label{e:betti d=3}
\begin{matrix}
     &0&1&2&3&4&5\\\text{0:}&1&\text{.}&\text{.}&\text{.}&\text{.}&\text{.}\\\text{1:}&\text{.}&\text{.}&\text{.}&\text{.}&\text{.}&\text{.}\\\text{2:}&\text{.}&33&95&    106&50&5\\\text{3:}&\text{.}&\text{.}&\text{.}&\text{.}&\text{.}&2\\\end{matrix}
\end{equation}
\end{ex}

Notice that this resolution is {\it almost linear}, in the sense of \cite{Iar}, meaning that the matrices giving the differential have linear entries, except for the first and the last. We show the essential features of \Cref{cubic-ex} persist in embedding dimension $n\geq 5$.

\begin{thm} 
\label{thm:cubic}     
Assume $n\ge 5$ and $\kk$ is infinite with $\ch(\kk)\ne 2$. A general principal symmetric ideal $I$ generated by a homogeneous cubic polynomial yields a quotient $A=R/I$ which is a $3$-extremely narrow algebra, thus compressed, with permissible socle polynomial $\frac{n(n-3)}{2}z^2+2z^3$. Moreover, $A$ has an almost linear resolution over $R$ with betti numbers given  in  \Cref{introthm}. Moreover, if $\text{char}(\kk)=0$, the $\sym_n$-equivariant structure on the resolution of $A$ is as described in \cref{thm:equivariantTors}. 
\end{thm}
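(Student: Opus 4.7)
The plan is to follow the strategy of \Cref{thm:soclequadratic} and verify the hypotheses of \Cref{strategy} for $d=3$ and $a = P(3) - 1 = 2$. The explicit cubic ideal in \Cref{cubic-ex}, which requires $n \ge 5$, satisfies $\dim_\kk I_3 = \dim_\kk R_3 - 2$; upper semicontinuity (\Cref{upper-s}) then produces a non-empty Zariski-open $U_1 \subseteq \P^{N-1}$ on which $\dim_\kk I_3 \ge \dim_\kk R_3 - 2$, giving hypothesis (1) of \Cref{strategy}. Once hypothesis (2) is verified below, \Cref{strategy} will yield that $A$ is $3$-extremely narrow, and the remaining claims of the theorem will follow from the machinery of \cref{s:narrow}.

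To verify hypothesis (2), I will follow the construction in \Cref{thm:mainnarrow}. Writing $f = \sum_{|\bu|=3} c_\bu x^\bu$, let $\alpha_\l$ denote the sum of those coefficients $c_\bu$ with $\type(x^\bu) = \l$ for $\l \vdash 3$. On the non-empty open set $U_2 = \{\alpha_{(3)} \ne 0\}$, set
\[
F_{(2,1)} := \alpha_{(3)} m_{(2,1)} - \alpha_{(2,1)} m_{(3)}, \qquad F_{(1,1,1)} := \alpha_{(3)} m_{(1,1,1)} - \alpha_{(1,1,1)} m_{(3)}.
\]
By \Cref{lem:W(t)}, these are linearly independent elements of $(I^\perp)_{-3}$. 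The heart of the argument is to show that $L_{F_{(2,1)}, F_{(1,1,1)}} = 0$ on a non-empty Zariski-open $U_3$. Writing a hypothetical element as $\bigl(\sum c_i x_i,\, \sum d_i x_i\bigr)$, I will extract the coefficients of the dual basis elements $y_k^{(2)}$ and $y_iy_j$ (with $i<j$) in the relation $\sum_i c_i (x_i \circ F_{(2,1)}) + \sum_i d_i (x_i \circ F_{(1,1,1)}) = 0$. The $y_iy_j$-equations take the form $c_i + c_j + \sum_{k \ne i,j} d_k = 0$; introducing $e_i := c_i - d_i$ and using $n \ge 3$ forces all $e_i$ to share a common value. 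Substitution into the $y_k^{(2)}$-equations then forces all $d_k$ to share a common value $d$ (on the open set where $\alpha_{(3)} + \alpha_{(2,1)} + \alpha_{(1,1,1)} \ne 0$), after which a final scalar equation yields
\[
d \cdot \bigl[(n-2)\bigl((n-1)\alpha_{(3)} - \alpha_{(2,1)}\bigr) + 2\alpha_{(1,1,1)}\bigr] = 0.
\]
The bracketed coefficient is not identically zero as a polynomial in the $\alpha_\l$'s, because $\ch(\kk) \ne 2$, so its non-vanishing cuts out $U_3$. On $U_3$ this forces $d=0$, whence $d_i = 0$ and $c_i = 0$ for all $i$, proving $L_{F_{(2,1)}, F_{(1,1,1)}} = 0$. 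This linear-algebra computation is the main technical obstacle, though entirely elementary.

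With both hypotheses of \Cref{strategy} established on $U_1 \cap U_2 \cap U_3$, I deduce that $A$ is $3$-extremely narrow with $(I^\perp)_{-3} = \langle F_{(2,1)}, F_{(1,1,1)} \rangle$ and socle polynomial $(\dim_\kk R_2 - 2n + 0)z^2 + 2z^3 = \tfrac{n(n-3)}{2}z^2 + 2z^3$; permissibility follows from \Cref{l:narrow}(4) since $\dim_\kk L_A = 0$, and \Cref{cor:betti} (with $\ell = 0$) then delivers the almost-linear minimal free resolution and the betti numbers predicted by \Cref{introthm}. Finally, $F_{(2,1)}$ and $F_{(1,1,1)}$ are $\sym_n$-invariant, so \Cref{l:narrow}(5) makes the short exact sequence $0 \to I \to \m^3 \to \kk^2(-3) \to 0$ live in $\MG R$; the argument of \Cref{thm:equivariantTors} then applies verbatim to give the claimed $\sym_n$-equivariant decompositions of $\Tor_i^R(A,\kk)$ when $\ch(\kk) = 0$.
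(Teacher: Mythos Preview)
Your proof is correct and follows the same high-level strategy as the paper: verify the two hypotheses of \Cref{strategy} on a non-empty Zariski-open locus, then invoke \Cref{l:narrow}, \Cref{cor:betti}, and the argument of \Cref{thm:equivariantTors}. The linear-algebra computation for $L_{F_{(2,1)},F_{(1,1,1)}}=0$ checks out (the equations $e_i+e_j=-D$ force a common value $e=-D/2$, the $y_k^{(2)}$-equations force a common $d_k=d$, and your bracketed linear form has coefficient $2\ne 0$ on $\alpha_{(1,1,1)}$, so it is not identically zero).

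There is, however, a genuine difference in the choice of inverse-system generators. The paper takes
\[
F=\alpha_{(2,1)}\,m_{(3)}-\alpha_{(3)}\,m_{(2,1)},\qquad G=\alpha_{(2,1)}\,m_{(1,1,1)}-\alpha_{(1,1,1)}\,m_{(2,1)},
\]
centering the construction at $m_{(2,1)}$ rather than $m_{(3)}$, and then shows $L_{F,G}=0$ by specializing to the explicit ideal of \Cref{cubic-ex}, where $\alpha_{(3)}=\alpha_{(1,1,1)}=0$ and $\alpha_{(2,1)}=1$, so that $(F,G)=(m_{(3)},m_{(1,1,1)})$ and the system visibly has only the trivial solution. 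The payoff is that a \emph{single} witness point (namely \Cref{cubic-ex}) lies in every open set the paper uses. Your choice $F_\lambda=\alpha_{(3)}m_\lambda-\alpha_\lambda m_{(3)}$ follows the general pattern of \Cref{thm:mainnarrow} and \Cref{lem:W(t)}, which is conceptually cleaner and makes the passage to \Cref{thm:equivariantTors} immediate; the cost is that \Cref{cubic-ex} has $\alpha_{(3)}=0$ and hence does not lie in your $U_2$, so you must (and do) argue non-emptiness of $U_2$ and $U_3$ directly from the non-vanishing of explicit linear forms rather than from the witness example. Both routes are valid once one remembers that over an infinite field any finite intersection of non-empty Zariski-open subsets of $\P^{N-1}$ is non-empty.
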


\begin{proof}
As explained in \cref{s:prelim}, and using the notation therein, we consider $f=f_c$ with $c\in \mathbb P^{N-1}$, where $N=\binom{n+1}{2}$ and $f=c_1m_1+c_2m_2+\cdots +c_Nm_N$. We find it convenient to rename the coefficients $c_k$ as follows: 
\[
c_k=\begin{cases}
a_i &\text{if $m_k=x_i^3$}\\
b_{ij}  &\text{if $m_k=x_i^2x_j$ with $i\ne j$}\\
e_{ijk} &\text{if $m_k=x_ix_jx_k$ with $i<j<k$.}
\end{cases}
\]
With this notation, we have: 
\[
f=\sum_{i=1}^n a_ix_i^3+\sum_{1\le i, j\le n, i\ne j}b_{ij}x_i^2x_j+\sum_{1\le i<j<k\le n}e_{ijk}x_ix_jx_k\,.
\]
We will verify conditions (1) and (2) of \cref{strategy}, for $c$ general. 

To show (1),  use \Cref{cubic-ex}  and upper semicontinuity (\cref{upper-s}) to  find a non-empty Zariski open set $U$ such that, if $c\in U$, then $\dim_\kk I_3\ge \dim_\kk  R_3-2$. 

Set 
$$
\alpha=\sum_{i=1}^n a_i\,, \qquad \beta=\sum_{1\le i, j\le n, i\ne j}b_{ij}\qquad\text{and}\qquad  \epsilon=\sum_{1\le i<j<k\le n}e_{ijk}
$$
To show (2), we define 
\begin{gather*}
F=\beta \left(\sum_{i=1}^n y_i^{(3)}\right)-\alpha\cdot \left(\sum_{\scriptscriptstyle1\le i, j\le n, i\ne j}y_i^{(2)}y_j\right)\\
G=\beta\cdot \left( \sum_{\scriptscriptstyle1\le i<j<k\le n}y_iy_jy_k\right)-\epsilon\cdot \left(\sum_{\scriptscriptstyle 1\le i, j\le n, i\ne j}y_i^{(2)}y_j\right)
\end{gather*}
A computation shows that $f\circ F=0=f\circ G$. Since $F$ and $G$ are invariant under the action of  $\sym_n$, it follows that  $(\sigma\cdot f)\circ F=0=(\sigma\cdot f)\circ G$ for all $\sigma\in \sym_n$,  and hence $F, G\in (I^\perp)_{-3}$. 
We show  that, for $f=f_c$ general, $F$ and $G$ are linearly independent. Indeed, this is satisfied when the matrix 
$$
M'=\begin{bmatrix}
\beta&-\alpha&0\\
0&-\epsilon &\beta
\end{bmatrix}
$$
has maximal rank. The locus where the rank is not maximal is described by the ideal $J$ generated by $\beta\epsilon$, $\alpha\beta$ and $\beta^2$, which are homogeneous polynomials in the coefficients of $f$. Set $U'=\P^{N-1}\setminus V(J)$. This is a Zariski open set of $\P^{N-1}$. Observe that $U'$ is non-empty for the specific choice of  $c$ that corresponds to \cref{cubic-ex}. Indeed, in this case we have $\alpha=\epsilon=0$ and $\beta=1$ and $\beta^2\ne 1$. 

 We now  show 
$
\dim_\kk L_{F,G}=0
$
for general $f$. Assume 
$$\sum_{i=1}^n u_i(x_i\circ F)+v_i(x_i\circ G)=0\qquad\text{with}\quad  u_i,v_i\in \kk\,.
$$
We need to show $u_i=v_i=0$ for all $i$. 
Observe that 
\begin{align*}
x_i\circ F&=\beta y_i^{(2)}-\alpha\sum_{j\ne i} y_j^{(2)}-\alpha\sum_{j\ne i}y_iy_j\\
x_i\circ G&=\beta \sum_{j<k, j,k\ne i} y_jy_k-\epsilon \sum_{j\ne i} y_j^{(2)}- \epsilon\sum_{j\ne i}y_iy_j
\end{align*}
We have thus
$$
\sum_i\left( u_i\beta y_i^{(2)}-u_i\alpha\sum_{j\ne i} y_j^{(2)}-u_i\alpha\sum_{j\ne i}y_iy_j+v_i\beta \sum_{j<k, j,k\ne i} y_jy_k-v_i\epsilon \sum_{j\ne i} y_j^{(2)}- v_i\epsilon\sum_{j\ne i}y_iy_j\right)=0
$$
The degree $-2$ monomials in $S$ are linearly independent, and, so, equating the coefficients of each of these monomials, one obtains a system of linear equations with variables $u_i$, $v_i$. The matrix $M''$ of this system is of size $h\times 2n$, where $h=\dim_\kk R_2=\frac{n(n+1)}{2}$. For $c$ general, we need this matrix to have maximal rank. The locus where the rank is not maximal is cut out by the ideal $I_{2n}(M'')$ of maximal minors of $M''$, which are homogeneous polynomials in the coefficients of $f$. Set $U''=\P^{N-1}\setminus V(I_{2n}(M'))$. This is a Zariski open set of $\P^{N-1}$. We will show that $U''$ is non-empty for the specific choice of  $c$ that corresponds to \cref{cubic-ex}. In this case, we have $\alpha=\epsilon=0$ and $\beta=1$, and hence we need to verify that the equation 
$$
\sum_i\left( u_i y_i^{(2)}+v_i\sum_{j<k, j,k\ne i} y_jy_k\right)=0
$$
only has the trivial solution $u_i=v_i=0$. This can be easily verified.  

The Zariski open set $U\cap U'\cap U''$ is non-empty, as $\kk$ is assumed infinite and we have shown that each of the sets $U$, $U'$, $U''$ is non-empty. 
Then, with $c\in U\cap U'\cap U''$,  \cref{strategy} shows that the algebra $A$ is $3$-extremely narrow with socle polynomial $bz^2+2z^3$, where  
 \begin{align*}
b&=\dim_\kk R_2-2\dim_\kk R_1+\dim_\kk L_A=\frac{(n+1)n}{2}-2n+\dim_\kk L_A=\frac{n(n-3)}{2}\,.
 \end{align*}
The fact that $A$ has permissible socle type and has an almost linear resolution is a consequence of the fact  that $L_A=0$, in view of  \cref{cor:betti} and  \cref{l:narrow} (4). 

Since we established that $A$ is $3$-extremely narrow, one can proceed as in the proofs of \Cref{introthm} and \cref{thm:equivariantTors} to deduce that the formulas given there for the betti numbers and the $\sym_n$-equivariant structure of the resolution apply. 
\end{proof}

\appendix
\renewcommand{\MG}{\operatorname{mod}_G}
	\section{Brief review of the category $\MG R$}\label{appendix}
	
	Let $R = \bigoplus_{i \in \mathbb{Z}} R_i$ be a commutative $\mathbb{Z}$-graded $\kk$-algebra. We assume $R_0 \cong \kk$.  Let $G$ be a group which acts on $R$ by degree-preserving $\kk$-algebra automorphisms, that is, via a group homomorphism $G\to \Aut(R)$. When $R={\kk}[x_1,\ldots, x_n]$ equipped with the standard $\mathbb{Z}$-grading, we restrict attention to subgroups $G$ of $\GL_n(\kk)$, where $\GL_n(\kk)$ acts on $R_1=\langle x_1,\cdots,x_n\rangle$ in the standard way and acts trivially on $\kk$, and this action naturally induces an action on $R_i$ for $i>0$. The special case $G=\sym_n$, viewed as the subgroup of permutation matrices in $\GL_n(\kk)$, is utilized in the bulk of this paper.

For a graded ring $R$ as above, $M$ is said to be a (${\mathbb{Z}}$-)graded $R$-module if $M = \bigoplus_{j \in \mathbb{Z}} M_j$ is an $R$-module and the module structure respects the grading, i.e. $R_i \times M_j$ maps to $M_{i+j}$. In this Appendix we denote the $R$-module structure as $(r,m) \mapsto rm$ for $r \in R, m \in M$. Given two graded $R$-modules $M$ and $N$, a morphism in the category of graded $R$-modules is a map $f: M \to N$ which is a homomorphism of $R$-modules and also preserves gradings, i.e. $f(M_i) \subseteq N_i$ for all $i$. On the other hand, as explained in \cite{BH}, for the purposes e.g. of constructing the graded version of the $\textup{Ext}$ functor, it is necessary to consider a broader class of morphisms, and in this manuscript we denote by $\Hom_R(M,N) := \bigoplus_{i \in \mathbb{Z}} \Hom_i(M,N)$ the collection of homogeneous module morphisms, where $\varphi \in \Hom_i(M,N)$ is said to be homogeneous of degree $i$ if $\varphi$ is an $R$-module homomorphism, and in addition, $\varphi(M_n) \subseteq N_{n+i}$ for all $n$. 	It is not hard to see that $\Hom_R(M,N)$ is itself a graded $R$-module, defined for $\varphi \in \Hom_R(M,N)$ by $(r\varphi)(m) := r(\varphi(m))$ for all $r \in R, m \in M$.

In what follows, we restrict attention further to graded $R$-modules which have the additional data of a $G$-action which is compatible with the graded $R$-module structure as well as the $G$-action on the coefficient ring $R$, as follows.

	\begin{defn}\label{MGR}
The category of finitely generated ($\mathbb{Z}$-)graded $R$-modules with $G$-action $\MG R$ has as objects ${\mathbb{Z}}$-graded $R$-modules $M = \bigoplus_i M_i$ as above, endowed with an action of $G$ so that $g \in G$ acts as a degree-preserving $k$-vector space automorphism of $M_i$ for all $g \in G$ and $i \in \mathbb{Z}$, and
\begin{equation}\label{eq:def MGR}
g\cdot (r m)=(g\cdot r) (g\cdot m)\qquad\text{for all $g\in G, r\in R$, $m\in M$.}
\end{equation} 
The morphisms in $\MG R$ are {\em  $G$-equivariant} graded $R$-module homomorphisms $f:M\to N$. By $G$-equivariance we mean that a morphism $f: M \to N$ must satisfy $f(g\cdot m)=g\cdot f(m)$ for all $g\in G$, $m\in M$. 
\end{defn}

\begin{rem} \label{rem:MGRproperties}
We list some facts about $\MG R$. 
\begin{enumerate}
\item
The category of $\MG R$ is an abelian category. 
\item
 If $M$ and $N$ are in $\MG R$, then $G$ acts on $M \otimes_R N$  by $g\cdot (v\otimes w) = g \cdot v\otimes g \cdot w$ and $M\otimes_RN\in \MG R$. 
 \item 
 If $M,N\in \Mod_G(R)$ and $\dim_\kk(M_i)<\infty$ for all $i \in \mathbb{Z}$, then $G$ acts on $\Hom_R(M,N)$ via $(g \cdot f)(m) = g\cdot f(g^{-1}\cdot m)$ and $\Hom_R(M,N) \in \MG R$. 
 \item Consider $\kk$ with the trivial $R$-module structure, where $R_0 \cong \kk$ acts by usual multiplication in $\kk$ and $R_i$ for $i \neq 0$ acts on $\kk$ by sending everything to $0$. For $M\in \Mod_G(R)$, we define $M^\vee :=\Hom_\kk(M,\kk)$. We equip $M^\vee$ with an $R$-module structure defined by 
 $$
 (r \phi)(m) :=\phi(rm)
 $$
  for $r\in R$, $\phi\in M^\vee$ and $m\in M$.  Moreover, taking $N=\kk$ with trivial $G$-action in (3) above, $M^\vee$ is also equipped with a $G$-action, given by  
 \begin{equation}\label{eq: g phi m}
 (g\cdot \phi)(m) :=\phi(g^{-1}\cdot m) \qquad\text{for $g\in G$, $\phi\in M^\vee$, $m\in M$}. 
 \end{equation}
 We next observe that the above $R$-action and $G$-action are compatible in the sense that $M^\vee\in \MG R$ as well. To see this, we must check that~\eqref{eq:def MGR}
 holds. This can be seen by the following computation: 
\begin{align*}
\left((g \cdot r) (g \cdot \phi)\right)(m)&=(g\cdot \phi)\left((g\cdot r)m\right)=
\phi(g^{-1}\cdot (g\cdot r)m)= \phi(r(g^{-1}\cdot m))\\
&= (r \phi)(g^{-1}\cdot m)
=(g\cdot (r \phi))(m)\,
\end{align*} 
where the first and fourth equations follow from the definition of the $R$-module structure of $M^\vee$, the second and fifth equation are from~\eqref{eq: g phi m}, and the third equation is implied by~\eqref{eq:def MGR}, applied to $M\in \MG R$. 
 \end{enumerate}

\noindent  If additionally the category of finite dimensional graded representations of $G$ over $\kk$ is semisimple, then the following hold:
\begin{enumerate}\addtocounter{enumi}{4}
 \item Any surjective map $M\twoheadrightarrow N$ in $\MG R$ admits a section as a map of graded $G$ representations. If moreover $M$ is projective as an $R$-module then the map admits a section in $\MG R$. 
 \item Every object in $\MG R$ has a free resolution in $\MG R$ \cite[Proposition 2.4.9 and Remark 2.4.10]{Galetto1}.
 \end{enumerate}
\end{rem}

The following well-known isomorphisms of $R$-modules have equivariant counterparts.

\begin{lem}\label{lem:freeisos}
If $F,U$ are objects in $\MG R$ and $F$ is a finitely generated free $R$-module the following are natural isomorphisms in $\MG R$
\begin{enumerate}[\quad\rm(1)]
\item \label{L1} $\Hom_R(F,U)\cong F^*\otimes_R U$; 
\item ${F^*}^*\cong F$.
\end{enumerate}
\end{lem}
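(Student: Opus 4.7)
Both isomorphisms are classical as isomorphisms of graded $R$-modules; the only novelty is verifying $G$-equivariance, which amounts to unwinding the $G$-action formulas from \Cref{rem:MGRproperties}. I would first establish (1) directly (rather than reducing to $F=R$), which gives (2) automatically by a standard specialization argument.

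For (1), I would define the natural map
\[
\Phi\colon F^*\otimes_R U\longrightarrow \Hom_R(F,U),\qquad \Phi(\varphi\otimes u)(f) :=\varphi(f)\cdot u.
\]
Since $F$ is finitely generated free, this is the classical graded $R$-module isomorphism (pick a dual basis and check on summands; equivalently, it is an isomorphism when $F=R$, and extends additively). Then I would verify equivariance via a direct computation: for $g\in G$, $\varphi\in F^*$, $u\in U$, $f\in F$,
\begin{align*}
\Phi\bigl(g\cdot(\varphi\otimes u)\bigr)(f)
&=\Phi\bigl((g\cdot\varphi)\otimes(g\cdot u)\bigr)(f)\\
&=(g\cdot\varphi)(f)\cdot(g\cdot u)\\
&=\bigl(g\cdot\varphi(g^{-1}\cdot f)\bigr)\cdot(g\cdot u),
\end{align*}
where the last equality uses the definition of the $G$-action on $F^*=\Hom_R(F,R)$ from \Cref{rem:MGRproperties}(3). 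On the other hand,
\begin{align*}
\bigl(g\cdot\Phi(\varphi\otimes u)\bigr)(f)
&=g\cdot\Bigl(\Phi(\varphi\otimes u)\bigl(g^{-1}\cdot f\bigr)\Bigr)
=g\cdot\bigl(\varphi(g^{-1}\cdot f)\cdot u\bigr).
\end{align*}
The two expressions agree because $U\in\MG R$ satisfies the compatibility $g\cdot(r\cdot m)=(g\cdot r)\cdot(g\cdot m)$ of \eqref{eq:def MGR}. Naturality in $U$ is immediate from the definition of $\Phi$.

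For (2), the natural evaluation map $\mathrm{ev}\colon F\to {F^*}^*$ sending $f\mapsto(\varphi\mapsto\varphi(f))$ is a graded $R$-module isomorphism when $F$ is finitely generated free; I would verify equivariance by
\[
(g\cdot\mathrm{ev}(f))(\varphi) = g\cdot\mathrm{ev}(f)(g^{-1}\cdot\varphi) = g\cdot\bigl((g^{-1}\cdot\varphi)(f)\bigr) = g\cdot\bigl(g^{-1}\cdot\varphi(g\cdot f)\bigr) = \varphi(g\cdot f) = \mathrm{ev}(g\cdot f)(\varphi),
\]
where the middle equality unwinds the $G$-action on $F^*$ and uses the fact that the $G$-action on $R$ is by automorphisms, so $g\cdot(g^{-1}\cdot r)=r$. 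Alternatively, (2) follows by applying (1) to $U=R$, giving $\Hom_R(F,R)\cong F^*\otimes_R R\cong F^*$, and then once more to $F^*$ in place of $F$ to obtain ${F^*}^*\cong F^{**}\cong F$, using that $F^*$ is also finitely generated free.

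\textbf{Main obstacle.} There is no substantive obstacle; the only care required is to track the two distinct $G$-action conventions (on $\Hom_R(-,-)$ via $(g\cdot f)(m)=g\cdot f(g^{-1}\cdot m)$, and on tensor products via diagonal action) and to use the hypothesis that the $G$-action on $R$ is by algebra automorphisms compatible with the module structure. The computation would be smoothed by a brief comment recording the identity $g\cdot(g^{-1}\cdot x)=x$ for elements of both $R$ and $U$.
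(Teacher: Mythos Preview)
Your proposal is correct and follows essentially the same route as the paper: the same explicit natural maps $\Phi(\varphi\otimes u)(f)=\varphi(f)\cdot u$ and $\mathrm{ev}(f)(\varphi)=\varphi(f)$ are used, and the equivariance checks unwind the $G$-actions on $\Hom$ and $\otimes$ exactly as in the paper's argument. Your alternative derivation of (2) from (1) and your explicit remark about tracking the two $G$-action conventions are minor additions not in the paper, but the core argument is identical.
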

\begin{proof}
(1) The isomorphism is given for  $f\in F^*, u\in U$ by $f \otimes u \mapsto \psi$, where $\psi(v) = f(v)u$. For $g\in G$ we have 
\[
g\cdot(f\otimes u)=(g\cdot f)\otimes (g\cdot u) \mapsto \phi\,,\qquad  \text{ where }
\]
\[
\phi(v) = (g\cdot f)(v)(g\cdot u)=\left(g\cdot f(g^{-1}\cdot v)\right)(g\cdot u)=g\cdot f(g^{-1}\cdot v)u=(g\cdot \psi)(v)
\]
This shows that the isomorphism is $G$-equivariant.

(2) The isomorphism is given for $x\in F$ by $x\mapsto \phi$ where $\phi(f)=f(x)$ evaluates $f\in F^*$ at $x$.
Now $g\cdot x\mapsto \psi$ where $\psi(f)=f(g\cdot x)$ while 
\[(g\cdot \phi)(f)=g\cdot \phi(g^{-1}\cdot f)=gg^{-1}\cdot f(g\cdot x)=f(g\cdot x)\,.
\]
 Therefore $\psi=g\cdot \phi$ and the isomorphism is equivariant as desired.
\end{proof}

\begin{rem}\label{rem}
Note that the action of a group element $g\in G$ on a module $M$ in $\MG R$ is not $R$-linear, which makes it inconvenient to work with directly. We may repackage this in the category of $R$-modules as follows: consider the ring homomorphism $g:R\to R$ and let $g^*:\mod(R)\to\mod(R)$ denote the restriction of scalars functor via $g$. Specifically, if $M$ is an $R$-module then $g^*M$ is $ M$ as a vector space, with $R$-module structure defined by $r m=(g\cdot r)m$ for all $r\in R, m\in g^*(M)$,  and if $f$ is a homomorphism then $g^*f=f$. Since it preserves maps, restriction of scalars is an exact functor. 

If $M$ is an object in $\MG R$ then  for each $g\in G$ the map $g:M\to g^*M, g(m)=g\cdot m$ is an $R$-module homomorphism.
 \end{rem}

Let $M,N$ be objects in $\MG R$. We now discuss the canonical structure of $\Tor_i^R(M,N)$  as objects of $\MG R$, which we shall always consider when discussing these modules.

\begin{lem}\label{lem:A5}
 If  the category of finite dimensional graded representations of $G$ over $\kk$ is semisimple, then the following hold. 
 \begin{enumerate}[\quad\rm(1)]
 \item The derived functor $\Tor_i^R(M,N)$  applied to objects in $\MG R$ produce objects with canonical structure in $\MG R$. 
 \item If $M,N$ are objects in $\MG R$ then the $G$ action on $\Tor_i^R(M,N)$ does not depend on the choice of projective  resolution for $M$ or for $N$. 
 \item Short exact sequences
$
0\to M' \to M \to M'' \to 0 \text{ and } 0\to N' \to N \to N'' \to 0
$
induce $G$-equivariant long exact sequences in homology.
 \end{enumerate} 
\end{lem}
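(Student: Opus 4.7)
The plan is to mimic the standard proofs of these facts in the non-equivariant setting, using the infrastructure provided by Remark A.2, most crucially parts (5) and (6), which guarantee the existence of free resolutions in $\MG R$ and a splitting property for short exact sequences of $G$-representations. The semisimplicity hypothesis on the category of finite-dimensional graded representations of $G$ is what makes these ingredients available and is exactly what is needed so that the usual homological-algebra constructions transfer to the equivariant setting.

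For part (1), I would begin by choosing a free (hence projective) resolution $P_\bullet \to M$ in $\MG R$, which exists by Remark A.2(6). By Remark A.2(2), each $P_i \otimes_R N$ lies in $\MG R$, and the induced differentials are $G$-equivariant by naturality of the tensor product action. The homology of this complex therefore inherits a canonical structure in $\MG R$: the $G$-action descends to subquotients since both cycles and boundaries are $G$-stable, and the grading and $R$-module structures are preserved throughout. This defines the claimed canonical $\MG R$-structure on $\Tor_i^R(M,N)$.

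For part (2), I would establish an equivariant comparison theorem: given two free resolutions $P_\bullet \to M$ and $P'_\bullet \to M$ in $\MG R$, the identity on $M$ lifts to an equivariant chain map $P_\bullet \to P'_\bullet$, unique up to equivariant chain homotopy. This amounts to showing, inductively, that if $\varphi_{i-1}:P_{i-1}\to P'_{i-1}$ has been constructed equivariantly and we wish to lift to $\varphi_i:P_i\to P'_i$, then the required lift exists in $\MG R$. The key point is that $P_i$ is free in $\MG R$ and $P'_\bullet \to M$ is exact, so the surjection $\ker(\partial^{P'}_{i-1}) \twoheadrightarrow$ (image of $\varphi_{i-1}\partial^P_i$) admits an equivariant section thanks to Remark A.2(5), allowing the construction of $\varphi_i$. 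The same argument yields the equivariant chain homotopy. Tensoring with $N$ then shows that the induced $G$-actions on the homologies computed from the two resolutions agree, and a symmetric argument handles replacement of the resolution of $N$, using the standard balancing of Tor.

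For part (3), I would prove an equivariant horseshoe lemma: given $0\to M'\to M\to M''\to 0$ in $\MG R$, one can build free resolutions $P'_\bullet\to M'$, $P_\bullet\to M$, $P''_\bullet\to M''$ in $\MG R$ fitting into a short exact sequence of complexes $0\to P'_\bullet\to P_\bullet\to P''_\bullet\to 0$ that is degreewise split in $\MG R$. The construction proceeds by taking $P_i := P'_i \oplus P''_i$ as graded $R$-modules with the obvious $G$-structure and choosing equivariant lifts at each step; the existence of these lifts again uses Remark A.2(5). Tensoring with $N$ preserves degreewise exactness (each short exact sequence being split as $G$-modules, hence after tensoring), so applying the snake lemma yields the long exact sequence in homology with equivariant connecting maps. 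The argument for the second short exact sequence is analogous (and in fact easier), producing a single free resolution $P_\bullet \to M$ in $\MG R$ and tensoring the short exact sequence $0\to N'\to N\to N''\to 0$ with it to obtain a short exact sequence of complexes in $\MG R$, from which the long exact sequence follows.

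The main technical obstacle I anticipate is ensuring at each step that the lifts and sections constructed are genuinely $G$-equivariant rather than merely $R$-linear. This is resolved entirely through Remark A.2(5), which is the equivariant version of the lifting property for free modules and depends essentially on the semisimplicity assumption; once this is available, the standard homological-algebra arguments carry through with only cosmetic modifications.
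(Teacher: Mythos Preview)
Your proposal is correct and follows the standard playbook for equivariant homological algebra: build an equivariant resolution, prove an equivariant comparison theorem, prove an equivariant horseshoe lemma, and read off the conclusions. The paper's proof takes a slightly different technical route. Rather than working with $G$-equivariant chain maps and equivariant lifts directly, the paper invokes the restriction-of-scalars functor $g^*$ of Remark~\ref{rem} to repackage the action of each $g\in G$ as an honest $R$-linear chain map $g\colon P\to g^*P$; it then uses exactness of $g^*$ and naturality of comparison maps and of the long-exact-sequence functor to transport the group action to homology and to check compatibility with changes of resolution and with connecting maps. Your approach is more hands-on and closer to how equivariant derived functors are usually set up; the paper's approach trades the inductive construction of equivariant lifts and homotopies for a single categorical trick that makes $R$-linearity automatic. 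Both rely on the same semisimplicity input (Remark~\ref{rem:MGRproperties}(5),(6)) and arrive at the same place. One small remark: the balancing of $\Tor$ you mention at the end of part~(2) is treated by the paper as a separate statement (Lemma~\ref{lem:torbalance}), so strictly speaking Lemma~\ref{lem:A5}(2) only asserts independence of the chosen resolution of $M$ (or, symmetrically, of $N$), not that the two give the same answer.
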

\begin{proof}
(1) If  $P$ is a projective resolution for an object $M$ of $\MG R$  then for every $g\in G$ applying \Cref{rem}  one constructs an $R$-linear chain map $g:P\to g^*P$ lifting  $g:M\to g^*M$. If $F:\MG R\to \MG R$ is a right exact functor, one obtains induced $R$-linear maps in homology $g:H(F(P))\to H(F(g^*P))$. Since the functor $g^*$ is exact we have $H(F(g^*P))=g^*H(F(P))$ and we interpret $g:H(F(P))\to g^*H(F(P)$ as an action of $G$ on $H(F(P))$, which endows $H(F(P))$ with a canonical structure in $\MG R$. 

(2) If  $P$ and $Q$ are projective resolutions for an object $M$ of $\MG R$ and $F:\MG R\to \MG R$ is a right exact functor, then one constructs comparison maps $f:P\to Q$ and $h:Q\to P$ in $\MG R$ lifting the identity on $M$. Let $g\in G$. Applying the functor $g^*$ yields an $R$-linear chain map  $g^*f:g^*P\to g^*Q$ which fits into a commutative diagram (since the comparison maps are natural)
\[
\begin{tikzcd} 
  P \arrow[r, "f"] \arrow[d, "g"] & Q \arrow[d, "g"] \\
g^*P    \arrow[r,  "g^*f" ]&  g^*Q.
 \end{tikzcd}
 \implies
 \begin{tikzcd} 
  H(F(P)) \arrow[r, "H(f)"] \arrow[d, "g"] & H(F(Q)) \arrow[d, "g"] \\
g^* H(F(P))   \arrow[r,  "H(g^*f)" ]&  g^*H(F(Q))
 \end{tikzcd}
 \]

Since $hf$ and the identity map are both chain maps $P\to P$ lifting the identity, they are homotopic, thus induce the same map (the identity) on $H(F(P))$. This shows that the map $H(f):H(F(P))\to H(F(Q))$ induced by $f$ is an $R$-module isomorphism and the commutative diagram above shows it is a $G$-equivariant.

The above considerations applied to $F=-\otimes_RN$ and $G=M\otimes_R -$ yield the claim regarding the structure of the respective derived functors  in $\MG R$ and their independence on resolutions.

(3) It is well known that the functor that takes short exact sequences of complexes to long exact sequences in homology is natural. Applying this functor to the map of complexes
\[
\begin{tikzcd} 
0 \arrow[r, ] &  F(P')\arrow[r, ] \arrow[d, "g"] & F(P) \arrow[r, ] \arrow[d, "g"] & F(P'') \arrow[r, ] \arrow[d, "g"] &0\\
0 \arrow[r, ] &  g^*F(P') \arrow[r,  "" ]&  g^*F(P) \arrow[r, ] & g^*F(P'') \arrow[r, ] &0 
 \end{tikzcd}
\]
where $P', P, P''$ are projective resolutions for $M', M, M''$ induces a chain map in homology which shows the corresponding long exact sequence in homology is $G$-equivariant.
\end{proof}

\begin{lem}\label{lem:torbalance}
 If  the category of finite dimensional graded representations of $G$ over $\kk$ is semisimple, then the derived functor $\Tor_i^R(-,-)$  applied to objects in $\MG R$ does not depend on which argument is resolved.
 \end{lem}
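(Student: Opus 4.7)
The strategy is the classical balancing-of-Tor argument carried out in the equivariant category $\MG R$. By Remark~\ref{rem:MGRproperties}(6), we may choose free resolutions $\varepsilon_P\colon P_\bullet\to M$ and $\varepsilon_Q\colon Q_\bullet\to N$ inside $\MG R$. Each free object appearing in such a resolution has the shape $R\otimes_\kk V$ for some finite-dimensional graded $G$-representation $V$; in particular it is a direct sum of (shifted) copies of $R$ and is therefore flat as an ordinary graded $R$-module.

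Form the first-quadrant double complex $C_{\bullet,\bullet} := P_\bullet \otimes_R Q_\bullet$, equipped with the diagonal $G$-action and the $R$-module structure coming from either factor (which agree). Its total complex $T := \Tot(C)$ is an object of $\MG R$ together with an equivariant differential. The augmentation $\varepsilon_Q$ gives an equivariant map of double complexes $P_\bullet\otimes_R Q_\bullet \to P_\bullet\otimes_R N$ (the latter viewed as a double complex concentrated in the $Q$-direction at $0$), and hence an equivariant chain map
\[
\alpha\colon T \longrightarrow P_\bullet\otimes_R N.
\]
Symmetrically, $\varepsilon_P$ induces an equivariant chain map
\[
\beta\colon T \longrightarrow M\otimes_R Q_\bullet.
\]

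The key step is to show that $\alpha$ and $\beta$ are quasi-isomorphisms. For $\alpha$, filter $C$ by columns (fix the $P$-index). Since each $P_i$ is flat as an $R$-module, the row $P_i\otimes_R Q_\bullet \to P_i\otimes_R N \to 0$ is exact, so the associated spectral sequence degenerates at $E^2$ and the map $\alpha$ induces an isomorphism on homology. The same argument with the roles of $P$ and $Q$ reversed, using flatness of each $Q_j$, shows that $\beta$ is a quasi-isomorphism. All the maps in this argument are equivariant because the resolutions, the tensor products, and the filtration/spectral-sequence constructions are all functorial in $\MG R$; moreover the filtrations are $G$-stable since $G$ acts diagonally.

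Combining these two quasi-isomorphisms gives equivariant isomorphisms
\[
H_i(M\otimes_R Q_\bullet) \xleftarrow[\cong]{H_i(\beta)} H_i(T) \xrightarrow[\cong]{H_i(\alpha)} H_i(P_\bullet\otimes_R N)
\]
in $\MG R$, identifying the two possible definitions of $\Tor_i^R(M,N)$. Independence of the particular choice of resolution used on either side then follows from Lemma~\ref{lem:A5}(2). The main subtlety to check carefully is that the filtrations on $T$ are preserved by the $G$-action so that the spectral sequence computations genuinely take place in $\MG R$; this is where the semisimplicity hypothesis is used, both to guarantee the existence of equivariant free resolutions and to ensure that equivariant subquotients behave well.
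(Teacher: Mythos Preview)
Your proposal is correct and follows essentially the same approach as the paper: both use the bicomplex $P_\bullet\otimes_R Q_\bullet$ of two equivariant projective resolutions and the resulting equivariant quasi-isomorphisms to $P_\bullet\otimes_R N$ and $M\otimes_R Q_\bullet$. The paper's proof is a terse one-liner asserting these quasi-isomorphisms, whereas you have spelled out the standard filtration/spectral-sequence justification; note that the semisimplicity hypothesis is really only needed to guarantee the existence of equivariant free resolutions (Remark~\ref{rem:MGRproperties}(6)), not for the filtration argument itself.
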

 \begin{proof}
 Consider equivariant projective resolutions $F \xrightarrow{\simeq} M$ and $G \xrightarrow{\simeq}N$. Then there are equivariant quasi-isomorphisms $F \otimes_R N \xrightarrow{\simeq} F  \otimes_R G \xleftarrow{\simeq} M\otimes_R G$, which induce for all $i$ isomorphisms $H_i(F \otimes_R N) \cong H_i( F  \otimes_R G)_i \cong H_i(M\otimes_R G)$ in $\MG R$.
 \end{proof}

\medskip

\end{document}